\documentclass[11pt]{amsart}

\usepackage{graphicx}
\usepackage{alltt}

\usepackage{graphics,color}

\usepackage{psfrag}

\newcommand{\PSLC}{{\bf {PSL}}(2,\C)}
\newcommand{\D} {\mathbb {D}}

\newcommand{\R} {\mathbb {R} }

\newcommand{\N}{\mathbb {N}}
\newcommand{\C} {\mathbb {C}}

\newcommand{\Sp} {\mathbb {S}}

\newcommand{\pt}{\partial}
\newcommand{\wt}{\widetilde}
\newcommand{\wh}{\widehat}

\newcommand{\Ur} {\mathcal {U}}
\newcommand{\Kr}{{\it {K}} }

\newcommand{\diam}{\operatorname{diam}}

\newcommand{\stab}{\operatorname{stab}}

\newcommand{\id}{\operatorname{Id}}

\newcommand{\Homeo}{\operatorname{Homeo}}

\newcommand{\kernel}{\operatorname{ker}}

\newtheorem{corollary}{Corollary}[section]
\newtheorem{theorem}{Theorem}[section]
\newtheorem{definition}{Definition}[section]
\newtheorem{lemma}{Lemma}[section]
\newtheorem{conjecture}{Conjecture}
\newtheorem{problem}{Problem}[section]
\newtheorem{proposition}{Proposition}[section]
\newtheorem{claim}{Claim}

\theoremstyle{remark}
\newtheorem*{remark}{Remark}

\begin{document}

\title[Criterion for  Cannon's Conjecture] {Criterion for Cannon's Conjecture}
\author[Vlad Markovic]{Vladimir Markovic}

\address{\newline California Institute of Technology \newline Department of Mathematics \newline Pasadena, CA 91125,  USA}
\email{markovic@caltech.edu}

\today

\subjclass[2000]{Primary 20H10}

\begin{abstract} The Cannon Conjecture from the geometric group theory asserts that a word hyperbolic group that acts effectively on its boundary, and  whose boundary is homeomorphic to the 2-sphere, is isomorphic to a Kleinian group. We prove the following Criterion for Cannon's Conjecture:
A hyperbolic group $G$ (that acts effectively on its boundary) whose boundary is homeomorphic to the 2-sphere is isomorphic to a Kleinian group if and only if every two points in the boundary of $G$ are separated by a quasi-convex surface subgroup. Thus, the Cannon's conjecture is reduced to showing that such a group contains ``enough" quasi-convex surface subgroups. 
\end{abstract}

\maketitle

\let\johnny\thefootnote
\renewcommand{\thefootnote}{}

\footnotetext{Vladimir Markovic is supported by the NSF grant number DMS-1201463}
\let\thefootnote\johnny

\section{Introduction} 
\subsection{The statement of the Criterion for Cannon's Conjecture} Let $G$ be a hyperbolic group (by this we mean that $G$ is word hyperbolic).  One of the central problems in the geometric group theory is the Cannon' Conjecture:

\begin{conjecture}[Cannon's Conjecture] Let $G$ be a hyperbolic group (that acts effectively on its boundary) whose boundary $\pt{G}$ is homeomorphic to the 2-sphere $\Sp^2$ ($\pt{G} \approx \Sp^2$). Then $G$ is isomorphic to a Kleinian group.  In particular, if  $G$ is torsion-free then it is isomorphic to the fundamental group of a closed hyperbolic 3-manifold.
\end{conjecture}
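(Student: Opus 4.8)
The statement is Cannon's Conjecture, which remains open; what this paper supplies is the \emph{Criterion} announced in the abstract, and the honest plan for the full conjecture is to combine that Criterion with a construction of sufficiently many surface subgroups. I will therefore treat the Criterion as established — it is the main theorem of the paper — and organize the attack in two stages: first a reduction, then the construction that carries the real difficulty.

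\textbf{Reduction.} Fix the visual metric on $\pt G \approx \Sp^2$, and recall that a quasi-convex surface subgroup $H \leq G$ has limit set $\Lambda(H) \subset \pt G$ a quasicircle, i.e. a topological circle separating $\Sp^2$ into two disks. By the Criterion it suffices to prove the purely existential statement: for every pair of distinct points $x, y \in \pt G$ there is a quasi-convex surface subgroup $H$ with $x$ and $y$ in different complementary components of $\Lambda(H)$. This eliminates all reference to $\Ho$ and to Kleinian structure, replacing Cannon's Conjecture by a subgroup-production problem inside the abstract group $G$. It is worth noting the parallel Bonk--Kleiner characterization — $G$ is Kleinian iff $\pt G$ is quasisymmetric to the round sphere — since any sufficiently rich separating family of quasi-Fuchsian quasicircles should, a posteriori, certify that quasisymmetry.

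\textbf{Construction of separating surface subgroups.} Here I would follow the Kahn--Markovic philosophy. First build \emph{good pants} in $G$: copies of a pair of pants whose cuffs are long and whose coarse geometry is $\epsilon$-close to a fixed totally geodesic model, realized by almost-geodesic axes in the Cayley graph with prescribed complex half-lengths and feet. Next show these pants are ubiquitous and equidistributed — for every orthogeodesic configuration and to within $\epsilon$, pants exist and their feet are nearly evenly spread around each cuff — using mixing of the geodesic flow on the space of bi-infinite geodesics $\pt^2 G = (\pt G \times \pt G)\setminus \Delta$ equipped with the $G$-invariant Patterson--Sullivan--Bowen--Margulis measure class. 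Finally assemble the pants into a closed surface by a perfect-matching (Hall's-theorem) argument that glues cuffs in nearly oriented-isometric pairs; the resulting $\pi_1$ is quasi-convex, and by steering the assembly one arranges the limit quasicircle to run between the prescribed $x$ and $y$.

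\textbf{Main obstacle.} The decisive difficulty is that a general hyperbolic group is \emph{not} a manifold: there is no Riemannian geodesic or frame flow, no unit tangent bundle, and a priori no smooth invariant measure carrying the exponential-mixing estimates that power the Kahn--Markovic counting and gluing. One must therefore manufacture a purely coarse, combinatorial substitute — a flow space of geodesics in the Cayley graph together with quantitative equidistribution for the natural measure class on $\pt^2 G$ — strong enough to produce good pants in all directions, robust enough to run the matching argument, and precise enough to control the \emph{position} of each resulting quasicircle so that any prescribed pair of boundary points is separated. Supplying this coarse mixing-and-assembly machinery, rather than the reduction via the Criterion, is where essentially all the work lies, and is precisely the point at which the conjecture remains open.
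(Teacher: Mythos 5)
You should be clear at the outset that the statement you were asked to prove is a \emph{conjecture}: the paper does not prove Cannon's Conjecture and never claims to. Its main theorem is exactly the reduction you invoke in your first stage — $G$ (hyperbolic, effective on $\pt G \approx \Sp^2$) is isomorphic to a Kleinian group if and only if every two points of $\pt G$ are separated by the limit set of a quasi-convex surface subgroup — and the paper's own closing remark explicitly frames the construction of enough such subgroups as a ``realistic hope,'' i.e.\ as open. So your Reduction step faithfully reproduces the paper's content (including the correct observation that the limit set of a quasi-convex surface subgroup is a Jordan curve in $\Sp^2$, which the paper uses via Lemma 2.2 and the Gabai/Casson--Jungreis theorem), and your honesty that the remainder is open matches the paper's position. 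To that extent your proposal is the right program; but as a proof of the stated conjecture it has a genuine and unavoidable gap, and it is worth naming precisely where your second stage fails rather than gesturing at it.

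The Kahn--Markovic construction you sketch is not merely ``hard to transplant'': each of its three pillars collapses in the abstract setting. First, the production and counting of good pants rests on \emph{exponential} mixing of the frame flow on a closed hyperbolic 3-manifold with respect to Liouville measure; on $\pt^2 G$ with the Patterson--Sullivan--Bowen--Margulis measure class one has at best qualitative mixing of a coarse geodesic flow, with no rate, and the pants-counting and the $\epsilon$-evenness of feet needed for the Hall-matching step are quantitative statements that do not follow from qualitative mixing. Second, the very notion of ``feet'' and ``complex half-length'' uses the normal bundle of a geodesic in a 3-manifold — a circle's worth of directions with a canonical (holonomy) identification along the geodesic; a hyperbolic group with $\Sp^2$ boundary gives you no such structure until after the conjecture is proved, so even formulating equidistribution of feet requires a substitute you have not supplied. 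Third, your separation requirement is strictly stronger than existence: you must steer the limit quasicircle of the assembled surface between two \emph{prescribed} points $x,y \in \pt G$. In the manifold case this comes from building the surface near an arbitrary totally geodesic plane; in the abstract case there are no planes to shadow, and nothing in your sketch controls the position of the limit set. Since supplying all three substitutes is exactly the open problem, your proposal should be read as a correct restatement of the paper's reduction plus a research program, not as a proof attempt with a repairable local gap.
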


\begin{remark} Let $G$ denote a hyperbolic group $G$ and let  $\mu:G \to \Homeo(\pt{G})$ be the standard homomorphism. The kernel $\kernel \mu$ is a finite normal subgroup of $G$ (it is finite since $G$ acts as a convergence group on $\pt{G}$).
We say that $G$ acts effectively on its boundary if $\kernel \mu$ is trivial. If $\kernel \mu$ is not trivial we can replace $G$ by the quotient $G_1=G/ \kernel \mu$. The group $G_1$ is a hyperbolic group too and it acts effectively on its boundary.
\end{remark}

The Hyperbolization Theorem (proved by Perelman) states that every closed, irreducible, atoroidal 3-manifold with infinite fundamental group is hyperbolic. Cannon devised a two part program towards proving this theorem. The first part was to prove the Weak Hyperbolization Conjecture that states that the fundamental group of a closed, irreducible and atoroidal 3-manifold with infinite fundamental group is hyperbolic. The second part is to prove the Cannon's Conjecture. Combining these two parts would imply that the fundamental group of a closed, irreducible and atoroidal 3-manifold with infinite fundamental group is isomorphic to a torsion free, co-compact lattice in $\PSLC$, that is, it is isomorphic to the fundamental group of a hyperbolic manifold. The Hyperbolization Theorem would then follow from  the result of Gabai-Meyerhoff-Thurston \cite{g-m-t} which states that 3-manifolds that are homotopy equivalent to hyperbolic 3-manifolds are themselves hyperbolic.

In particular, the positive solution to the Cannon's Conjecture would imply that a closed and irreducible  3-manifold whose fundamental group is word-hyperbolic is  hyperbolic. For example, 
the fundamental group of a negatively curved closed 3-manifold is word-hyperbolic. Thus proving the Cannon's Conjecture would offer a new proof of a major chunk of the Hyperbolization Theorem.

Another well known open problem in Geometric group theory is the following question of Gromov:

\begin{problem}[Gromov] Let $G$ be one-ended hyperbolic group. Does $G$ necessarily contain a surface subgroup?
\end{problem}
We say that a group is a surface group if is isomorphic to the fundamental group of a closed surface $S_g$ of genus  $g \ge 2$. Particularly interesting question is whether $G$ contains a quasi-convex surface subgroup.
Kahn and Markovic  \cite{kahn-markovic} proved the Surface Subgroup Theorem, thus answering the Gromov's question in the positive in the case when $G$ is the fundamental group of a closed hyperbolic 3-manifold. Calegari \cite{calegari} showed that (under certain homological assumptions) hyperbolic groups obtained as graphs of free groups amalgamated over cyclic subgroups contain surface subgroups. 
Kim and Oum \cite{kim-oum} showed the same is true if $G$ is a one-ended double of a free group (this work was preceded by the paper of Gordon and Wilton \cite{gordon-wilton}).

The purpose of this paper is to show that if $G$ is a hyperbolic group with $\pt{G} \approx \Sp^2$, and if $G$ contains ``enough"  quasi-convex surface subgroups, then $G$ is isomorphic to the fundamental group of closed hyperbolic 3-manifold. In the following definition we make precise what we mean by $G$ containing  ``enough" quasi-convex surface subgroups.

\begin{definition} Let $G$ be a hyperbolic group. We say that two (distinct) points $p,q \in \pt{G}$ are separated by a quasi-convex, co-dimension 1 subgroup $H<G$ if they lie in different connected components of the set $\pt{G} \setminus \pt{H}$ (sometimes we also say that $p$ and $q$ are separated by the limit set of $H$).
We say that $G$ contains enough co-dimension 1, quasi-convex subgroups if every two  points $p,q \in \pt{G}$ can be separated by a co-dimension 1, quasi-convex subgroup $H<G$.
\end{definition}

When $\pt{G}\approx \Sp^2$, quasi-convex surface subgroups are examples of co-dimension 1 subgroups of $G$. 
If every two points in $\pt{G}$ can be separated by a quasi-convex surface subgroup we say that $G$ contains enough quasi-convex surface subgroups. The main result of this paper is the following criterion for the Cannon's Conjecture:

\begin{theorem}[Criterion for Cannon's  Conjecture]\label{thm-main} Let $G$ be a hyperbolic group (that acts effectively on its boundary) with $\pt{G}\approx \Sp^2$. Suppose that $G$ contains enough quasi-convex surface subgroups. Then $G$ is isomorphic to a Kleinian group.
\end{theorem}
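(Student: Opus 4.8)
The strategy is to use the quasi-convex surface subgroups to build a faithful action of $G$ on $\mathbb{H}^3$ by combining Cannon-Swenson / Bonk-Kleiner style quasi-symmetric uniformization with the separation data provided by the hypothesis. The overarching goal is to show that the boundary $\pt G \approx \Sp^2$, equipped with its natural visual metric, is quasi-symmetrically equivalent to the standard round sphere $\Sp^2$; by the Bonk-Kleiner rigidity theorem, a quasi-symmetric action of $G$ on the round sphere conjugates into the conformal group $\PSLC = \operatorname{Isom}(\Ho)$, giving the desired Kleinian representation. Since $G$ acts on $\pt G$ as a uniform convergence group (indeed as a geometric action on a hyperbolic space), once the action is conjugated to act by conformal maps of $\Sp^2$ it extends to a geometric action on $\Ho$, and Sullivan's theorem identifies $G$ with a cocompact Kleinian group.

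**Key steps, in order.** First I would recall the Cannon-Swenson criterion: to verify the quasi-symmetric uniformization it suffices to show that $\pt G$ is linearly locally connected (automatic from $\pt G \approx \Sp^2$ together with the convergence action) and, crucially, that $\pt G$ satisfies an Ahlfors $2$-regularity (or Loewner-type) condition. The role of the surface subgroups is to produce a rich family of separating curves: for each quasi-convex surface subgroup $H < G$, the limit set $\pt H$ is a quasi-circle in $\pt G \approx \Sp^2$ separating the sphere into two disks, and the hypothesis guarantees that such quasi-circles separate every pair of points. I would organize these separating quasi-circles into a $G$-invariant ``pattern'' and show that this pattern is sufficiently dense and uniformly quasi-round to force the optimal regularity exponent of $\pt G$ to equal $2$, thereby meeting the Loewner/Ahlfors-regular hypothesis needed for uniformization. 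The separation hypothesis is precisely what prevents the Hausdorff dimension of $\pt G$ from exceeding $2$: it lets me cover $\pt G$ by the two-dimensional pieces cut out by the surface-group quasi-circles and control their overlaps, upgrading the abstract metric sphere to a metrically $2$-dimensional one.

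**The main obstacle.** The hard part will be converting the purely topological/coarse separation data into the quantitative metric regularity that the Bonk-Kleiner and Cannon-Swenson machinery demands. Knowing that \emph{some} quasi-convex surface subgroup separates each pair of points is a priori only a covering statement with no uniform control on the shapes, diameters, or separation constants of the limit sets $\pt H$ as $H$ ranges over the relevant (infinitely many) conjugates and commensurability classes. I expect the crux of the argument to be a compactness/cocompactness reduction: using the cocompactness of the $G$-action on triples in $\pt G$, one promotes the pointwise separation hypothesis to a \emph{uniform} one, obtaining a single constant $K$ such that any two points at a given ratio of scales are separated by a $K$-quasi-round surface-group quasi-circle whose diameter is comparable to their distance. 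Establishing this uniformity, and then deducing $2$-Ahlfors-regularity (equivalently, ruling out ``extra'' Hausdorff dimension) from it, is where essentially all the analytic work lies; the concluding appeal to Bonk-Kleiner and Sullivan is then formal.
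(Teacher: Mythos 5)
Your proposal has a genuine gap at exactly the step you flag as ``the main obstacle,'' and the gap is not merely technical: converting the separation hypothesis into Ahlfors $2$-regularity (or attainment of conformal dimension $2$) of $\pt{G}$ is an open problem, and your sketch offers no mechanism for it. The specific heuristic you give is incorrect: the limit sets $\pt{H}$ of quasi-convex surface subgroups are quasi-circles, i.e.\ topologically $1$-dimensional subsets of $\pt{G}$, and the ``two-dimensional pieces cut out'' by them are just the complementary Jordan domains, which are open subsets of $\pt{G}$ and therefore have whatever Hausdorff dimension $\pt{G}$ itself has. Covering the sphere by such pieces gives no upper bound on dimension at all. (Note also that Hausdorff dimension of the visual metric exceeds $2$ even for genuine cocompact Kleinian groups; what one would need is a statement about the \emph{conformal} gauge, namely that the Ahlfors-regular conformal dimension equals $2$ and is attained, which is precisely the unresolved hypothesis in the Bonk--Kleiner approach to Cannon's conjecture.) Your uniformization of the separating family via cocompactness on triples is plausible and indeed appears in spirit in the paper (via the Gitik--Mitra--Rips--Sageev finiteness lemmas), but it does not bridge the gap from uniform topological separation to metric $2$-regularity, so the concluding appeal to Bonk--Kleiner and Sullivan never gets off the ground.

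The paper takes an entirely different, non-analytic route that avoids boundary regularity altogether. From the separation hypothesis, Bergeron--Wise gives that $G$ is cubulated by finitely many quasi-convex surface subgroups; Agol's theorem then makes every quasi-convex subgroup separable. Using separability together with finiteness results of Gitik--Mitra--Rips--Sageev, the paper passes to a torsion-free finite-index subgroup $\wt{G}$ that is cubulated by \emph{malnormal} quasi-convex surface subgroups. The malnormality makes the limit sets of distinct conjugates disjoint Jordan curves, and the paper then builds a ``$G$-complex'': it spans each limit curve by an embedded disc in $\D^3$ (after straightening the null family of curves to round circles via a Whyburn-type theorem) and extends the boundary action of $\wt{G}$ step by step to a free convergence action on all of $\overline{\D}^3$, the triviality of cell stabilizers being exactly where the separation hypothesis is used. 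This exhibits $\wt{G}$ as the fundamental group of a Haken $3$-manifold, and then Scott's core theorem, Thurston's hyperbolization for Haken manifolds, and the Sullivan--Tukia theorem (to pass back from $\wt{G}$ to $G$) finish the proof. In short, where you propose to win by metric uniformization of the boundary, the paper wins by cubulation, separability, and an explicit topological extension of the action to the $3$-ball.
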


Kahn and Markovic \cite{kahn-markovic}  showed that the fundamental group of a closed hyperbolic 3-manifold contains enough quasi-convex surface subgroups and thus by the above theorem we have the following equivalence stated in the abstract above: 

\begin{alltt}
\normalfont
\emph{A hyperbolic group that acts effectively on its boundary, and whose 
boundary is homeomorphic to the 2-sphere, is isomorphic to a Kleinian 
group if and only if it contains enough quasi-convex surface subgroups.} 
\end{alltt}

\subsection{Cubulation and separability in hyperbolic groups} The sizable part of the argument behind the proof of the Criterion for Cannon's conjecture relies on the following theorem recently proved by Agol (see Theorem 1.1 and Corollary 1.2  in \cite{agol}):

\begin{theorem}[Agol]\label{thm-A} Let $G$ be a cubulated hyperbolic group (a group is cubulated if it is acting properly and co-compactly on a CAT(0) cube complex). Then  quasi-convex subgroups of $G$ are separable. 
\end{theorem}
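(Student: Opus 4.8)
The plan is to deduce separability of quasi-convex subgroups from the much stronger structural statement that $G$ is \emph{virtually special} in the sense of Haglund and Wise, and then to exploit the separability that special cube complexes inherit from right-angled Artin groups. Concretely, I would proceed in four stages: first handle the case where the quotient is already special, by embedding the special group into a right-angled Artin group; then separate a given quasi-convex subgroup there using the canonical completion and retraction; then reduce an arbitrary cubulated $G$ to the special case; and finally transfer separability back across a finite-index subgroup. So suppose first that $G$ is the fundamental group of a compact \emph{special} cube complex $Y=X/G$, meaning that the hyperplanes of $X$ exhibit none of the four Haglund--Wise pathologies (self-intersection, one-sidedness, self-osculation, inter-osculation). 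The resulting canonical coloring of the hyperplane classes produces a local isometry from $Y$ into the Salvetti complex of a right-angled Artin group $A$, with generators of $A$ corresponding to hyperplane classes and commuting relations to crossing pairs. On $\pi_1$ this gives an injection $G \hookrightarrow A$ onto a convex-cocompact subgroup, so that combinatorially convex subcomplexes of $\wt{Y}$ push forward to convex subcomplexes in the cube complex underlying $A$; in particular $G$ is residually finite, since $A$ is.

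\textbf{Separating quasi-convex subgroups in a special group.} A quasi-convex subgroup $H<G$ is, after passing to its convex core, the stabilizer $\stab(Z)$ of an $H$-cocompact convex subcomplex $Z\subset X$, and its limit set $\pt{H}$ is the boundary of that convex hull. The essential tool is the \emph{canonical completion and retraction} of Haglund--Wise: the local isometry $Z/H \to Y$ of compact special cube complexes extends to a finite cover $\wh{Y}\to Y$ into which $Z/H$ embeds, together with a retraction $\wh{Y}\to Z/H$. On fundamental groups this exhibits $H=\pi_1(Z/H)$ as a retract of the finite-index subgroup $\pi_1(\wh{Y})<G$, i.e. a virtual retract. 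Since a virtual retract of a residually finite group is separable, every quasi-convex $H<G$ is closed in the profinite topology. This settles the theorem whenever $G$ is special.

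\textbf{Reducing to the special case.} The remaining, and by far the hardest, step is to show that an arbitrary cubulated hyperbolic $G$ admits a finite-index subgroup $G_0<G$ with $X/G_0$ special; this is Agol's theorem proper. The plan is to run Wise's quasi-convex hierarchy machinery: using the \emph{Malnormal Special Quotient Theorem}, one cuts $X/G$ along a $G$-orbit of hyperplanes into smaller cube complexes, and argues by induction on the length of the hierarchy that each piece, and hence the total space, becomes virtually special after passing to suitable finite covers. The pathologies that obstruct specialness---self-osculation and inter-osculation above all---are killed by separating the hyperplane stabilizers and the relevant double cosets, which is supplied by the inductive hypothesis together with MSQT, and then assembled into a single coloring by a finite cover. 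I expect this inductive coloring and Dehn-filling argument to be the main obstacle: it requires controlling infinitely many finite covers simultaneously and producing one finite-index subgroup that eliminates all four pathologies at once, and it is exactly here that the full force of the Wise--Agol program is needed.

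\textbf{Transferring to finite index.} Finally, separability passes from $G_0$ up to $G$. If $H<G$ is quasi-convex, then $H\cap G_0$ is quasi-convex in $G_0$, hence separable there by the special case; since $[G:G_0]<\infty$ and $G$ is residually finite, a finite union of $H$-cosets built from the separable finite-index intersection is again separable, so $H$ is closed in the profinite topology of $G$. This completes the reduction and shows that quasi-convex subgroups of every cubulated hyperbolic group are separable.
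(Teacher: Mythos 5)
Your proposal is correct and takes essentially the same route as the paper, which treats this statement as a black box: the paper's entire ``proof'' is a remark citing Agol's Theorem 1.1 (a cubulated hyperbolic group virtually embeds in a right-angled Artin group, i.e., is virtually special) and then transferring quasi-convex subgroup separability across subgroups and finite-index supergroups via Lemma 2.2.2 of Long--Reid, which is precisely your canonical-completion-and-retraction step plus your finite-index transfer step. Since the paper, like you, does not prove the virtual specialness step itself (your MSQT/hierarchy sketch) but attributes it to Agol, there is no gap relative to the paper's own treatment.
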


\begin{remark} This theorem is stated as Corollary 1.2 in \cite{agol}. If a hyperbolic groups $G$ is cubulated then it is proved that  $G$ virtually embeds into a Right Angled Artin Group and thus $G$ has a finite index subgroup whose quasi-convex subgroups are separable.  
On the other hand, the notion of quasi-convex subgroups being separable is invariant under passing onto subgroups or onto finite index supergroups (see  Lemma 2.2.2. in \cite{l-r}) and we conclude that quasi-convex subgroups of $G$ are separable.
\end{remark}
\vskip .3cm
Recall that a subgroup $H<G$ is separable if there is a sequence $G_n<G$ of finite index subgroups of $G$ such that 
$$
\bigcap G_n=H.
$$
Also, for a subgroup $H<G$ and $g \in G$ we will use the standard abbreviation $H^g=g^{-1}Hg$. Another important ingredient we need is the following theorem of Bergeron-Wise (see the statement and the proof of Theorem 1.4 in \cite{bergeron-wise}) that builds on the work of Sageev \cite{sageev}:

\begin{theorem}[Bergeron-Wise]\label{thm-BW} Let G be a hyperbolic group that contains enough  quasi-convex, co-dimension 1 subgroups. Then 
\begin{enumerate}
\item One can find finitely many co-dimension 1, quasi-convex subgroups $H_1,...,H_n<G$, such that for every two distinct points 
$p,q \in \pt{G}$, there exists $1\le i \le n$ and $g \in G$ such that $p$ and $q$ are separated by  $H^{g}_i$.
\item  The group $G$ is cubulated. 
\end{enumerate}
\end{theorem}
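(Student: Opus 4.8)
The plan is to first establish the finiteness assertion (1) by a compactness argument on the space of distinct pairs, and then to feed the resulting finite family of walls into Sageev's cube-complex machine to obtain (2). Note that the hypothesis forces $G$ to be non-elementary (an elementary hyperbolic group has at most two boundary points and cannot have enough co-dimension $1$ separating subgroups), which is all that the cocompactness input below requires.

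\textbf{Part (1).} Write $\pt^{2}G=(\pt{G}\times\pt{G})\setminus\Delta$ for the space of distinct pairs, where $\Delta$ is the diagonal. For a co-dimension $1$, quasi-convex subgroup $H<G$ set
$$
S_H=\{(p,q)\in\pt^{2}G:\ p,q\ \text{lie in different components of }\pt{G}\setminus\pt{H}\}.
$$
Since $\pt{H}$ is closed in $\pt{G}$, the components of $\pt{G}\setminus\pt{H}$ are open, so each $S_H$ is an open subset of $\pt^{2}G$. A direct computation with the $G$-action on $\pt{G}$ gives $g\cdot S_H=S_{gHg^{-1}}$, so the family $\{S_H\}$, indexed by all co-dimension $1$ quasi-convex subgroups, is permuted by $G$. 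The assumption that $G$ contains enough such subgroups says precisely that $\{S_H\}$ covers $\pt^{2}G$. I would then invoke the standard fact that a non-elementary hyperbolic group acts cocompactly on $\pt^{2}G$: fix a compact set $K$ with $G\cdot K=\pt^{2}G$. Finitely many members $S_{H_1},\dots,S_{H_n}$ cover $K$, and writing an arbitrary pair as $(p,q)=g\cdot(p_0,q_0)$ with $(p_0,q_0)\in K$ we obtain $(p_0,q_0)\in S_{H_i}$ for some $i$, hence $(p,q)\in g\cdot S_{H_i}=S_{H_i^{g^{-1}}}$. Since $g^{-1}$ ranges over all of $G$, this is exactly the statement of (1).

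\textbf{Part (2), the wallspace.} Using the finite family $H_1,\dots,H_n$ from (1), I would build a $G$-invariant wallspace and apply Sageev's dual cube complex construction \cite{sageev}. Each co-dimension $1$ subgroup $H_i$ determines a wall: a partition of a Cayley graph of $G$ into two halfspaces whose limit sets are unions of the components of $\pt{G}\setminus\pt{H_i}$ and whose stabilizer is commensurable with $H_i$. The full collection of walls is the $G$-orbit of these $n$ walls, indexed by the cosets of the $H_i$. To run Sageev's construction one must check that the wallspace is \emph{locally finite} (any two points of $G$ are separated by only finitely many walls) and \emph{finite dimensional} (only boundedly many walls pairwise cross); both follow from the quasi-convexity of the $H_i$ together with the bounded packing property of quasi-convex subgroups of a hyperbolic group. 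Sageev's construction then yields a finite-dimensional CAT(0) cube complex $X$ carrying a $G$-action by automorphisms.

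\textbf{Cocompactness and properness.} Cocompactness is the formal consequence of (1): there are finitely many $G$-orbits of walls and the dimension of $X$ is bounded, so there are finitely many orbits of cubes. The crucial—and hardest—point is \emph{properness}. In the dual complex the combinatorial distance between the vertices corresponding to a base point $x_0$ and to $g\cdot x_0$ equals the number of walls separating them, so properness is equivalent to this number tending to infinity as $|g|\to\infty$. This is where the full strength of ``enough separating subgroups'' enters: because every pair of distinct points of $\pt{G}$ is separated by a wall and $\pt{G}$ is compact, one extracts a uniform statement that every sufficiently long geodesic segment in the Cayley graph must cross a wall, and a bounded-overlap argument then forces the count of walls separating $x_0$ and $g\cdot x_0$ to grow with $|g|$. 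Combined with local finiteness this yields a proper, cocompact action, so $G$ is cubulated. I expect the quantitative wall-crossing estimate, together with the bounded-packing input that makes the wallspace locally finite and finite dimensional, to be the main technical obstacles; the compactness argument for (1) and the formal application of Sageev's machine are by comparison routine.
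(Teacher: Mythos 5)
The first thing to say is that the paper contains no proof of this statement: it is imported wholesale from Bergeron--Wise (the text points to the statement and proof of Theorem 1.4 of \cite{bergeron-wise}), so your proposal can only be measured against that original argument. Your plan does reproduce its architecture faithfully: a compactness argument on the space of distinct pairs $\pt^{2}G$ for part (1), then Sageev's dual cube complex \cite{sageev} for the resulting finite family of walls, with bounded packing of quasi-convex subgroups giving local finiteness and finite dimension, cocompactness coming from the finiteness in (1), and properness --- correctly identified by you as the crux --- coming from a uniform estimate forcing the number of walls separating $x_0$ from $g\cdot x_0$ to grow with $|g|$.

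There is, however, one genuine gap in what you assert as already established. You claim that ``since $\pt{H}$ is closed in $\pt{G}$, the components of $\pt{G}\setminus\pt{H}$ are open, so each $S_H$ is open.'' Openness of the components of an open subset requires local connectedness of the ambient space, and $\pt{G}$ need not be locally connected in the generality of this theorem: a free group has Cantor set boundary and does satisfy the hypothesis (even the trivial subgroup is quasi-convex and co-dimension $1$ there, with empty limit set), yet the components of $\pt{G}\setminus\pt{H}$ are singletons, which are never open in a Cantor set. In that example $S_H$ happens to be open for a different reason (it is simply the set of pairs avoiding the closed set $\pt{H}$), but your justification collapses, and openness of the component-defined $S_H$ is precisely the non-trivial point on which the entire covering argument for (1) hinges --- it cannot be waved through. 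The repair, which is how Bergeron--Wise actually proceed, is to replace ``different components of $\pt{G}\setminus\pt{H}$'' by separation by the wall attached to $H$: a quasi-convex co-dimension $1$ subgroup yields a partition of $G$ into two $H$-invariant halfspaces $Y^{+},Y^{-}$ whose limit sets are closed, cover $\pt{G}$, and intersect in $\Lambda(H)$; the set of pairs $(p,q)$ with $p\notin\Lambda(Y^{-})$ and $q\notin\Lambda(Y^{+})$ is then manifestly open, and one must prove the bridging lemma that component-separation implies wall-separation so that the hypothesis still furnishes a cover of $\pt^{2}G$ by such open sets. Two smaller remarks: your parenthetical claim that the hypothesis forces $G$ to be non-elementary is false ($\Z$ satisfies it via the trivial subgroup), though harmless since $\pt^{2}G$ is compact in the elementary cases; and cocompactness of the dual complex is not a purely ``formal'' consequence of (1) --- it uses hyperbolicity and quasi-convexity (the width theorem of Gitik--Mitra--Rips--Sageev) to bound collections of pairwise crossing walls, much as you invoke for finite dimensionality.
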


In fact, Bergeron and Wise show that the group $G$ acts on the CAT(0) cube complex associated to the subgroups $H_1,...,H_n$ (the associated CAT(0) cube complex was introduced by Sageev in \cite{sageev}). We also say that $G$ is cubulated by the groups $H_1,..,H_n$.
In order to apply Theorem \ref{thm-A} and conclude that quasi-convex subgroups of $G$ are separable we only need to know that the property $(2)$ holds in the previous theorem (that is, we need to know that $G$ is cubulated).
However, we will make other important use of the property $(1)$ beside its primary role in showing that $G$ is cubulated. This will be explained at the beginning of Section 3 below.

We record the main corollary of Theorem \ref{thm-A} and the part $(2)$ of Theorem \ref{thm-BW}:

\begin{corollary}\label{cor-main-1}  Let $G$ be a hyperbolic group that contains enough quasi-convex, co-dimension 1 subgroups. Then every quasi-convex subgroup of $G$ is separable.
(In particular, if $G$ is a hyperbolic group with $\pt{G}\approx \Sp^2$ that  contains enough quasi-convex surface subgroups, then every quasi-convex subgroup of $G$ is separable.)
\end{corollary}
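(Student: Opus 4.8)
The plan is to deduce Corollary~\ref{cor-main-1} directly by combining the two cited theorems, so the proof is essentially a chaining argument rather than anything requiring new ideas. The statement to prove is: if $G$ is a hyperbolic group containing enough quasi-convex, co-dimension~1 subgroups, then every quasi-convex subgroup of $G$ is separable.

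First I would invoke part $(2)$ of Theorem~\ref{thm-BW} (Bergeron-Wise). The hypothesis of the corollary is exactly the hypothesis of that theorem, namely that $G$ contains enough quasi-convex, co-dimension~1 subgroups. Hence $G$ is cubulated: it acts properly and co-compactly on a CAT(0) cube complex (the one built by Sageev from the finitely many subgroups $H_1,\dots,H_n$ furnished by part $(1)$). Second, having established that $G$ is a cubulated hyperbolic group, I would apply Theorem~\ref{thm-A} (Agol), which asserts precisely that quasi-convex subgroups of a cubulated hyperbolic group are separable. This immediately yields that every quasi-convex subgroup of $G$ is separable, which is the main assertion.

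For the parenthetical specialization, I would note that when $\pt{G}\approx\Sp^2$, a quasi-convex surface subgroup is an instance of a co-dimension~1 subgroup (as already observed in the text preceding Theorem~\ref{thm-main}). Therefore, if $G$ has $\pt{G}\approx\Sp^2$ and contains enough quasi-convex surface subgroups, then in particular it contains enough quasi-convex, co-dimension~1 subgroups, so the general statement applies and every quasi-convex subgroup of $G$ is separable.

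I do not expect any genuine obstacle here, since all the hard analytic and combinatorial work is absorbed into the black-box theorems of Bergeron-Wise and Agol; the only thing to verify is that the hypotheses match up cleanly. The one point that merits a sentence of care is the logical direction in Bergeron-Wise: I only need property $(2)$ (cubulation) to feed into Agol, even though property $(1)$ will be used elsewhere in the paper. Provided the definition of ``co-dimension~1, quasi-convex'' used in the corollary agrees with the one in Theorem~\ref{thm-BW} (it does, by the definitions set up above), the implication is a formal consequence.
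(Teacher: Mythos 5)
Your proposal is correct and follows exactly the paper's own route: the paper records Corollary~\ref{cor-main-1} precisely as the combination of part $(2)$ of Theorem~\ref{thm-BW} (cubulation) with Theorem~\ref{thm-A} (Agol's separability), and the parenthetical case is handled just as you do, by noting that quasi-convex surface subgroups are co-dimension~1 subgroups when $\pt{G}\approx \Sp^2$. Nothing further is needed.
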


As we already mentioned, fundamental groups of hyperbolic 3-manifolds are cubulated \cite{kahn-markovic}, \cite{bergeron-wise}. Thus the main application of this corollary in 3-dimensional topology is that it yields a proof of the Virtual Haken Conjecture \cite{agol}.  

In this paper we show that this result may have other important applications. To illustrate this point we show how Corollary \ref{cor-main-1} together with the work of Tukia \cite{tukia} gives a new proof of the following theorem of Gabai, Casson-Jungreis.

\begin{theorem}[Gabai, Casson-Jungreis]\label{thm-GCJ} Let G be a word-hyperbolic group that acts effectively on its boundary and such that $\pt{G}$ is homeomorphic to $\Sp^1$. Then $G$ is Fuchsian.
\end{theorem}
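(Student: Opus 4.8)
The plan is to verify the hypotheses of Theorem~\ref{thm-BW} and Corollary~\ref{cor-main-1}, use the resulting separability to reduce to a torsion-free finite index subgroup, recognize that subgroup as a surface group, and finally promote the conclusion to all of $G$ using Tukia's work. The point I want to stress at the outset is that, in contrast with the two-dimensional boundary case treated in Theorem~\ref{thm-main}, producing enough co-dimension 1 subgroups is essentially free here: on the circle any two points already separate, so cyclic subgroups suffice and there is no need for surface subgroups.

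First I would check that $G$ contains enough quasi-convex, co-dimension 1 subgroups. Since $\pt{G}\approx\Sp^1$ is infinite, connected and perfect, $G$ is non-elementary, and for every infinite order $g\in G$ the cyclic subgroup $\langle g\rangle$ is quasi-convex (infinite cyclic subgroups are quasi-isometrically embedded in a hyperbolic group), with limit set $\pt{\langle g\rangle}=\{g^{+},g^{-}\}$ the pair of fixed points of $g$. Two points separate the circle, so $\langle g\rangle$ is co-dimension 1. To separate an arbitrary pair $p,q\in\Sp^1$, I would invoke the standard fact that pairs of fixed points of loxodromic elements are dense in the space of distinct pairs of $\pt{G}$; choosing $g$ whose fixed points lie one in each of the two components of $\Sp^1\setminus\{p,q\}$ makes $\{g^{+},g^{-}\}$ link $\{p,q\}$ on the circle, so $\langle g\rangle$ separates $p$ from $q$. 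Hence $G$ contains enough quasi-convex, co-dimension 1 subgroups.

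Now Theorem~\ref{thm-BW} shows $G$ is cubulated and Corollary~\ref{cor-main-1} shows that every quasi-convex subgroup of $G$ is separable. Applying this to the (quasi-convex) trivial subgroup gives that $G$ is residually finite. Since a hyperbolic group has only finitely many conjugacy classes of finite subgroups, residual finiteness lets me pass to a torsion-free finite index subgroup $G_0<G$. Finite index subgroups are quasi-isometric to $G$, so $\pt{G_0}\approx\Sp^1$ as well, and $G_0$ is now a torsion-free hyperbolic group with circle boundary. Here I would use the Bestvina--Mess theory of group boundaries: a torsion-free hyperbolic group whose boundary is the sphere $\Sp^{n-1}$ is a Poincar\'e duality group of dimension $n$, so with $n=2$ the group $G_0$ is a $PD(2)$ group; by the Eckmann--M\"uller--Linnell theorem every $PD(2)$ group is the fundamental group of a closed surface. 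As $G_0$ is hyperbolic this surface has genus at least $2$, so $G_0$ is isomorphic to a cocompact Fuchsian group.

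It remains to pass from the finite index subgroup $G_0$ back to $G$, and this is where the work of Tukia~\cite{tukia} is used: a group acting on $\Sp^1$ as a convergence group (as every hyperbolic group does on its boundary) that contains a Fuchsian subgroup of finite index is itself conjugate to a Fuchsian group. Applied to $G_0<G$ this yields that $G$ is Fuchsian. I expect the genuinely delicate step to be this last one rather than the construction of co-dimension 1 subgroups: one must upgrade the finite extension $1\to G_0\to G\to G/G_0\to 1$, which records a finite group of (outer) symmetries of the surface, to an honest Fuchsian structure on $G$ using only the topological convergence action on $\Sp^1$, in effect bypassing a Nielsen realization argument. The cube-complex input of Corollary~\ref{cor-main-1} is exactly what manufactures the torsion-free Fuchsian subgroup $G_0$, so the remaining subtlety is confined to Tukia's finite-extension step.
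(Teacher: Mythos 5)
Your proposal is correct in outline, and its first half is exactly the paper's argument: the paper likewise notes that infinite cyclic subgroups are quasi-convex with two-point limit sets, that such pairs of points separate any two given points of $\Sp^1$, and then applies Corollary \ref{cor-main-1} to get residual finiteness and hence a torsion-free finite index subgroup $\wt{G}<G$. Where you genuinely diverge is afterwards. The paper never identifies $\wt{G}$ as a surface group: it feeds the torsion-free finite-index subgroup directly into Tukia's convergence-group results (Corollary 3B and Theorem 6B of \cite{tukia}), which handle both the torsion-free case and the finite extension, and concludes at once that $G$ is topologically conjugate to a Fuchsian group. You instead first show $\wt{G}$ is abstractly isomorphic to $\pi_1(S_g)$, $g\ge 2$, via Bestvina--Mess (torsion-free hyperbolic with circle boundary implies PD(2)) and Eckmann--M\"uller--Linnell (PD(2) implies surface group), and only then invoke Tukia for the finite-extension step. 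This is a legitimate alternative and it isolates which part of Tukia is truly indispensable, but it trades one deep theorem for another (the PD(2) theorem is at least as heavy as Tukia's torsion-free case), so nothing is gained in economy. One point you must make explicit for your route to close: Tukia's finite-extension statement needs the finite-index subgroup to be Fuchsian \emph{as a group of homeomorphisms of} $\Sp^1$, i.e. its boundary action must be topologically conjugate to a Fuchsian action, whereas your PD(2) argument delivers only an abstract isomorphism $\wt{G}\cong\pi_1(S_g)$. The bridge is standard --- an isomorphism of hyperbolic groups induces an equivariant homeomorphism of boundaries, and Milnor--Svarc identifies $\pt{\pi_1(S_g)}$ with the Fuchsian action on $\pt{\Ha}$ --- but since the gap between abstract isomorphism and topological conjugacy is precisely the content of the convergence group theorem, this step cannot be left implicit.
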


\begin{remark}  In fact, Gabai and Casson-Jungreis prove a stronger result. The only assumption they need is that $G$ is a convergence group acting on $\Sp^1$ (such $G$ can contain ``parabolic" elements and thus it does not have to be a  hyperbolic group, and also the limit set of $G$ need not be the entire circle). But the previous theorem is a major case in their study and itself is a very strong result.
\end{remark}

\begin{proof} The group $G$ is hyperbolic and $\partial{G} \approx \Sp^1$. Thus infinite order cyclic subgroup of $G$ are co-dimension 1 quasi-convex subgroups. Moreover, every two pints on $\Sp^1$ can be separated by the limit set of a such  cyclic group (the limit set of such groups consists of two points on $\Sp^1$). It follows from Corollary \ref{cor-main-1} that $G$ is residually finite and since $G$ is hyperbolic there exists a finite index subgroup $\wt{G}<G$ that is torsion-free. Then by 
Corollary 3B and Theorem 6B from \cite{tukia} it follows that $G$ is topologically conjugated to a Fuchsian group. 
\end{proof}

\subsection{Proof of the Criterion for Cannon's Conjecture}  Suppose that $G$ is a hyperbolic group, with $\pt{G}\approx \Sp^2$,  and suppose that $G$ contains enough quasi-convex surface subgroups.
The main step in the proof is to show that $G$ is virtually isomorphic to a fundamental group of a closed 3-manifold. We prove the following theorem in the remainder of this paper:

\begin{theorem}\label{thm-1} Let $G$ be a hyperbolic group with $\pt{G}\approx \Sp^2$ and suppose that $G$ contains enough quasi-convex surface subgroups. Then $G$ contains a torsion-free finite index subgroup $\wt{G}<G$ that is isomorphic to the fundamental group of an oriented 3-manifold $M$ that contains a closed incompressible surface.
\end{theorem}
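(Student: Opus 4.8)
The plan is to upgrade the homological structure forced by $\pt{G}\approx\Sp^2$ into an actual manifold, by using one quasi-convex surface subgroup to produce an embedded incompressible surface and the full separating family to control the resulting pieces. First I would fix the coefficient ring and pass to a convenient subgroup. By Corollary \ref{cor-main-1} every quasi-convex subgroup of $G$ is separable, so $G$ is residually finite; since $G$ is cubulated by Theorem \ref{thm-BW}, Agol's Theorem \ref{thm-A} gives that $G$ is virtually special, hence virtually torsion-free (special groups embed in right-angled Artin groups, which are torsion-free). I choose a torsion-free, orientable (orientation character trivial, index $\le 2$) finite index subgroup $\wt{G}<G$. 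The hypotheses descend: $\pt{\wt{G}}=\pt{G}\approx\Sp^2$, and if a quasi-convex surface subgroup $H<G$ separates $p,q$, then $H\cap\wt{G}$ is a finite index (hence surface) subgroup with the same limit circle $\pt H$, so it still separates $p$ and $q$. Thus $\wt{G}$ still contains enough quasi-convex surface subgroups. Because $\pt{\wt{G}}$ is a $2$-sphere, the Bestvina--Mess theorem shows $\wt{G}$ is a torsion-free Poincar\'e duality group of dimension $3$; in particular $\wt{G}$ has the $\Z$-(co)homology of a closed oriented aspherical $3$-manifold, which is the orientability input we want.

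Next I would convert a surface subgroup into an embedded surface and split $\wt{G}$. Fix a quasi-convex surface subgroup $H<\wt{G}$; its limit set $\pt H$ is a quasicircle cutting $\Sp^2$ into two disks, so $H$ is a codimension $1$, two-sided subgroup. Using separability of $H$ (and of the finitely many relevant double cosets) I would pass to a further finite index subgroup in which the $\wt{G}$-translates of $\pt H$ are pairwise equal or disjoint --- the group-theoretic analogue of promoting an immersed incompressible surface to an embedded one in a finite cover. A disjoint invariant system of quasicircles is dual, in the sense of Sageev \cite{sageev}, to an action of $\wt{G}$ on a tree, so $\wt{G}$ decomposes as a finite graph of groups whose edge groups are quasi-convex surface subgroups and whose vertex groups are quasi-convex $PD(3)$-pairs with surface boundary. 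This is exactly where I expect to use the separation property $(1)$ of Theorem \ref{thm-BW} beyond cubulation: the fact that a \emph{finite} family of surfaces separates \emph{every} pair of boundary points should force the complementary regions of the full pattern of quasicircles to be disk-like (simply connected boundary at infinity), so that the pieces of the splitting are as simple as possible.

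Finally I would recognize the pieces as genuine $3$-manifolds and reassemble. Each vertex group is a $PD(3)$-pair whose boundary is a disjoint union of aspherical surfaces, and the aim is to identify such a pair with the fundamental group of a compact oriented aspherical $3$-manifold with incompressible boundary, then to glue these manifolds along the surfaces prescribed by the graph of groups into a closed oriented aspherical $3$-manifold $M$ with $\pi_1 M\cong\wt{G}$. The subgroup $H$ is then carried by a $\pi_1$-injective, hence incompressible, closed surface in $M$, which gives the conclusion. The hard part will be precisely this recognition step: passing from the purely homological duality data of a $PD(3)$-pair to an honest manifold structure is, in full generality, the content of the (open) three-dimensional Poincar\'e duality conjecture, so the argument cannot invoke a black-box recognition theorem. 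Instead it must exploit the specific geometry available here --- hyperbolicity of $\wt{G}$ (no $\Z^2$ subgroups, so every piece is atoroidal), the sphere boundary, and above all the abundance of separating quasi-convex surface subgroups, which lets one cut repeatedly until the remaining pieces are simple enough (for instance $I$-bundles or handlebody-like pieces with disk boundary at infinity) to be assembled by hand into the desired manifold.
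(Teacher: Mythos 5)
Your first step (pass to a torsion-free, orientation-compatible finite index subgroup and use separability to make the relevant families of limit circles pairwise disjoint) runs parallel to the paper's Section 2: there, Lemma \ref{lemma-X-0} combines separability with the Gitik--Mitra--Rips--Sageev results to produce a finite index subgroup $\wt{G}$ in which each surface subgroup is \emph{malnormal}, so distinct conjugates have disjoint limit circles. But from that point on your plan has a genuine gap, which you yourself name and then do not close: after splitting $\wt{G}$ into a graph of groups, your vertex groups are merely $PD(3)$-pairs, and promoting a $PD(3)$-pair to the fundamental group of a compact $3$-manifold is precisely the open Poincar\'e-duality recognition problem (Wall's conjecture). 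Saying the argument ``must exploit the specific geometry available here'' is not a proof: nothing in your setup forces the complementary pieces to be $I$-bundles or handlebody-like, and ``hyperbolic with sphere boundary'' is exactly the hypothesis of Cannon's conjecture itself, so your final step is a statement at least as hard as the theorem being proved. There is also an earlier technical problem: translates of limit circles of \emph{different} subgroups $H_i$ and $H_j$ can cross, so Sageev's construction applied to the whole family yields a higher-dimensional CAT(0) cube complex, not a tree, and an iterated splitting (cut along one surface, then cut the vertex groups) does not make sense group-theoretically, since the remaining surface subgroups need not descend to subgroups of the vertex groups.

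The paper avoids duality theory entirely, and this is what lets it bypass the recognition problem that blocks your route. Its proof of Theorem \ref{thm-1} is: (i) Theorem \ref{thm-X-1} and Theorem \ref{thm-X-2}, the malnormal cubulation described above; (ii) Theorem \ref{thm-1-new}, a direct topological construction of the manifold. For each malnormal $H_i$, the limit circles of its conjugates form a null family of disjoint Jordan curves; a Whyburn-type theorem (Theorem \ref{thm-whyburn}) conjugates them to round circles, which are then spanned by disjoint hyperbolic disks in $\D^3$. This produces a ``$G$-complex'', and the action of $\wt{G}$ on $\Sp^2$ is extended over these disks by Lemma \ref{lemma-imp}. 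Refining (``intersecting'') the $G$-complexes of the various $H_i$ via Propositions \ref{prop-triv-0}--\ref{prop-inter}, the separation hypothesis forces the stabilizer of every complementary component to be trivial (the fixed points $g^{+},g^{-}$ of a nontrivial stabilizing element could not be separated by any limit circle), whence the action extends radially to a free convergence action on all of $\overline{\D}^3$ (Proposition \ref{prop-vazno}). The manifold is then simply $M=\D^3/\wt{G}$ by Proposition \ref{prop-lako}, and the incompressible surface comes from any of the malnormal quasi-convex surface subgroups. In short: where you attempt to reconstruct $M$ from algebraic duality data plus splittings, the paper constructs $M$ as an honest quotient of the ball by an extended convergence action, and no $PD(3)$ recognition is ever needed.
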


\begin{remark} Assume that $G$ is a hyperbolic group with $\pt{G}\approx \Sp^n$, and suppose that $G$ contains enough quasi-convex, $(n-1)$-negatively curved manifold subgroups (a group $H$ is $(n-1)$-negatively curved manifold group if $H$ is isomorphic to the fundamental group of a closed $(n-1)$-dimensional Riemannian manifold of negative curvature). Using similar methods one may be able to show that $G$ is virtually  isomorphic to the fundamental group of a $n$-dimensional topological manifold (whose universal cover is homeomorphic to the $n$-dimensional ball).
\end{remark}

Assuming Theorem \ref{thm-1} the rest of the proof of Theorem \ref{thm-main} runs as follows. We first show that the Cayley graph of $\wt{G}$ is quasi-isometric to the hyperbolic space $\D^3$.

Let $M$ be the manifold from Theorem \ref{thm-1}, that is the oriented 3-manifold such that $\wt{G} \cong \pi_1(M)$. We do not if $M$ is compact but we do know that $\pi_1(M)$ is finitely generated since it is a hyperbolic group. 
Applying the Scott's Core Theorem \cite{scott} we find that there exists a compact 3-manifold $\wt{M}$ whose fundamental group is isomorphic to $\wt{G}$ (the new manifold $\wt{M}$ is a ``compact core" of $M$ and it is embedded in $M$). 
In particular, the new manifold $\wt{M}$ is Haken (because $M$ is Haken). For simplicity we set $M=\wt{M}$, that is $M$ is a compact 3-manifold and $\pi_1(M) \cong \wt{G}$.

We may also assume that no component of the boundary of $M$ is $\Sp^2$ (otherwise we just glue in a 3-disc and this does not change the fundamental group of $M$). We now show that $M$ is a  closed hyperbolic manifold.

By the Thurston Hyperbolization Theorem for Haken 3-manifolds (see Theorem 1.43 in \cite{kapovich}) the manifold $M$ admits a  complete geometrically finite hyperbolic structure. That is, there exists a geometrically finite Kleinian group $\Gamma$ acting on $\D^3$ with the following properties:

\begin{enumerate}
\item $\pi_1(M) \cong \Gamma$.  
\item  There exists $\epsilon>0$ such that $M$ is homeomorphic to the $\epsilon$-neighborhood of the quotient $C(\Gamma)/\Gamma$,  where  $C(\Gamma) \subset \D^3$ is the convex core of the limit set $\Lambda(\Gamma) \subset \Sp^2$ of $\Gamma$. 
\end{enumerate}

In particular, $\Gamma$ acts co-compactly and properly on the $\epsilon$-neighborhood of $C(\Gamma)$. Thus, it follows from the Milnor-Svarc Lemma that the Cayley graph of $\Gamma$ is quasi-isometric to $C(\Gamma)$ and in particular the boundaries at infinity of the two spaces are homeomorphic. Since $\Lambda(\Gamma)$ is the boundary at infinity of the $\epsilon$-neighborhood of $C(\Gamma)$ we conclude that $\Lambda(\Gamma)=\Sp^2$.  
Since $\Gamma$ is geometrically finite we conclude that $C(\Gamma)=\D^3$ and the Cayley graph of $\wt{G}$ is quasi-isometric to $\D^3$. Therefore $M$ is homeomorphic to the closed hyperbolic 3-manifold $\D^3/\Gamma$. 

It remains to show that $G$ is isomorphic to a Kleinian group. We saw that the Cayley graph of $\wt{G}$ is quasi-isometric to $\D^3$. Since $\wt{G}$ is a finite index subgroup of $G$ it follows that the Cayley graph of $\wt{G}$ is  quasi-isometric to the Cayley graph of $G$. Thus,  the Cayley graph of $G$ is quasi-isometric to $\D^3$. Then it follows from Theorem 9.2 in \cite{k-b} that $G$ is isomorphic to a Kleinian group (this last statement follows from the  Sullivan-Tukia theorem which states that quasi-conformal groups of homeomorphisms of $\Sp^2$ are conjugated to Kleinian groups).

\subsection{Outline of the paper} It remains to prove Theorem \ref{thm-1}. That is, we need to show that a hyperbolic group $G$ with $\pt{G}\approx \Sp^2$ and  that contains enough quasi-convex surface subgroups has a finite index subgroup that is isomorphic to the fundamental group of a Haken 3-manifold.

This is done in two steps. The first step is done in the next section where we prove Theorem  \ref{thm-X-1}. This theorem is a (virtually)  strengthened version of Theorem \ref{thm-BW}. Namely, we show that any hyperbolic group $G$ that is cubulated by quasi-convex, co-dimension 1 subgroups, contains a finite index and torsion-free subgroup  $\wt{G}<G$ which can be cubulated by malnormal, quasi-convex, co-dimension 1 subgroups. Moreover  if $\pt{G}\approx \Sp^2$ and $G$ can be cubulated by quasi-convex surface subgroups then $\wt{G}$  can be cubulated by  malnormal, quasi-convex, surface subgroups.

The second step is to show that the group $\wt{G}$ is isomorphic to the fundamental group of a 3-manifold that contains a closed incompressible surface. We do this by showing that the action of $\wt{G}$ on $\Sp^2$ can be extended to a free and properly discontinuous action by homeomorphisms on $\D^3$ (in fact we will show a stronger property that $\wt{G}$ acts on $\D^3$ freely as a convergence group). Thus $M \approx \D^3/\wt{G}$ is a 3-manifold whose fundamental group is isomorphic to $\wt{G}$ ($M$ contains an incompressible surface since $\wt{G}$ contains  malnormal quasi-convex surface groups).

\begin{remark} We use a considerable firepower in the proof of the Criterion for Cannon's Conjecture. Besides  Agol's Theorem \ref{thm-A} and Bergeron-Wise Theorem \ref{thm-BW} we also make essential use of the Thurston's Haken Hyperbolization Theorem, theory of quasi-convex subgroups of hyperbolic groups (mainly results from the paper \cite{g-m-r-s} by Gitik-Mitra-Rips-Sageev), and we develop the short theory of $G$-complexes (this name is inspired by Tukia's notation in \cite{tukia} but our construction and theory of $G$-complexes is quite different than his theory from \cite{tukia}).

However, we do not rely on Perelman's Geometrization Theorem. In light of the Surface Subgroup Theorem \cite{kahn-markovic}, there is a realistic hope that one may be able to prove the existence of enough surface subgroups in $G$ which would then yield a a proof of the Cannon's Conjecture which together with the result of Gabai-Meyerhoff-Thurston \cite{g-m-t} would give a new proof of the major chunk of the Hyperbolization Theorem for 3-manifolds. 
\end{remark}

\subsection{Acknowledgement} Ian Agol suggested independently that Theorem \ref{thm-main}  should be true. In fact, I am grateful to Ian for  reading this manuscript and sending me detailed comments that have improved the paper. 
Also, I would very much like to thank Leonid Potyagailo and Victor Gerasimov for sending me suggestions and corrections.

\section{Virtual cubulation by malnormal subgroups} 
In this section  we show that if a hyperbolic group $G$ is cubulated by quasi-convex, co-dimension 1 subgroups, then it contains a torsion-free,  finite index subgroup $\wt{G}$  which is cubulated by malnormal, quasi-convex, co-dimension 1 subgroups. In particular, if $\pt{G}\approx \Sp^2$, and $G$ is cubulated by quasi-convex surface subgroups,  we show that $\wt{G}$ is cubulated by malnormal, quasi-convex surface subgroups. Then in the next section we show that this subgroup $\wt{G}$ is isomorphic to the fundamental group of a 3-manifold with a closed incompressible 
(meaning essential and embedded) subsurface.

\subsection{Preliminary lemmas}  The main result of this subsection is Lemma \ref{lemma-X-0} which may be of independent interest.  We refer the reader to the paper \cite{g-m-r-s} by Gitik-Mitra-Rips-Sageev as we will use various supporting lemmas from that paper throughout.
  
The following lemma follows immediately from Lemma 1.2 in \cite{g-m-r-s} and it was stated in \cite{a-g-m} as Lemma 3.4. 

\begin{lemma}\label{lemma-AGM} Let $G$ be a hyperbolic group and  $H<G$ a  quasi-convex subgroup. Then the group $H$ contains only finitely many $H$-conjugacy classes of subgroups $H \cap H^g$, $g \in G$, such that $|H \cap H^g|=\infty$.
\end{lemma}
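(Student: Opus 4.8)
The plan is to separate the statement into a purely formal reduction, which accounts for the phrase ``follows immediately'', and the genuine geometric input, which is the cited width estimate of Gitik--Mitra--Rips--Sageev. Concretely, I would first show that the $H$-conjugacy class of the intersection $H \cap H^g$ depends only on the double coset $HgH$, so that the number of $H$-conjugacy classes of infinite such intersections is at most the number of double cosets $HgH$ with $|H \cap H^g| = \infty$; I would then invoke the finiteness of the latter, which is exactly Lemma 1.2 of \cite{g-m-r-s}.

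For the reduction, suppose $g' = h_1 g h_2$ with $h_1, h_2 \in H$, where as usual $H^g = g^{-1} H g$. Since $h_1^{-1} H h_1 = H$, the factor $h_1$ is absorbed and
\[
H^{g'} = (h_1 g h_2)^{-1} H (h_1 g h_2) = h_2^{-1} g^{-1} H g\, h_2 = h_2^{-1} H^g h_2 .
\]
Intersecting with $H$ and using $h_2 H h_2^{-1} = H$ gives
\[
H \cap H^{g'} = H \cap h_2^{-1} H^g h_2 = h_2^{-1}\bigl( H \cap H^g \bigr) h_2 ,
\]
so $H \cap H^{g'}$ is $H$-conjugate to $H \cap H^g$. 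Hence the assignment $g \mapsto H \cap H^g$, taken up to $H$-conjugacy, factors through the set of double cosets $H \backslash G / H$, and it is onto the collection of $H$-conjugacy classes in question. Therefore it suffices to bound the number of double cosets $HgH$ with $|H \cap H^g| = \infty$.

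The remaining step is the one carrying all the content, and it is where I expect the real work to lie if one did not have the literature at hand. Because $G$ is hyperbolic and $H$ is quasi-convex, each intersection $H \cap H^g$ is again quasi-convex, and an infinite such intersection forces the cosets $H$ and $gH$ to coarsely overlap along a bi-infinite quasigeodesic; the thin-triangle geometry of $G$ then bounds, in terms of $\delta$ and the quasi-convexity constant of $H$ alone, the ``width'' of $H$, and in particular confines such $g$ to finitely many double cosets. This finiteness is precisely the statement isolated as Lemma 1.2 in \cite{g-m-r-s} (and recorded as Lemma 3.4 in \cite{a-g-m}), so rather than reproduce the fellow-traveling estimates I would cite it directly and combine it with the reduction above to conclude.
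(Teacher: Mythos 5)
Your proposal is correct and follows essentially the same route as the paper, which gives no argument beyond citing Lemma 1.2 of \cite{g-m-r-s} (as recorded in Lemma 3.4 of \cite{a-g-m}). The only difference is that you make explicit the formal step the paper treats as ``immediate'' --- namely that $H\cap H^{h_1gh_2}=h_2^{-1}(H\cap H^g)h_2$, so the $H$-conjugacy class of the intersection depends only on the double coset $HgH$, reducing the claim to the double-coset finiteness supplied by the cited lemma.
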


Recall that the boundary $\pt{G}$ of a hyperbolic group $G$ can be endowed with a visual metric such that $\pt{G}$ is a compact metric space. Once and for all we fix such a metric on $\pt{G}$. Then the group $G$ acts on $\pt{G}$ by homeomorphisms as a convergence group with respect to the topology on $\pt{G}$. We use the same letter $g$ to denote a group element and the corresponding homeomorphism of $\pt{G}$.

Given a set $S \subset G$, we let $\Lambda(S)$ denote the set of accumulation points on $\pt{G}$ of the set $S$ (we refer to $\Lambda(S)$ as the limit set of $S$). If $H<G$ is a subgroup and $g \in G$, then $\Lambda(H^g)=g^{-1}(\Lambda(H))$.
The following lemma is Corollary 2.5 from \cite{g-m-r-s}:

\begin{lemma}\label{lemma-GMRS} Let $H<G$ be a quasi-convex subgroup and let $\delta>0$. Then there exists a constant $R=R(\delta,H)$ such that there is at most $R$  different groups of the form $H^g$, $g \in G$, for which $\diam(\Lambda(H^g))>\delta$  
(here $\diam$ denotes the diameter of a set in $\pt{G}$ with respect to its visual metric).
\end{lemma}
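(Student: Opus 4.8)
The plan is to translate the metric condition $\diam(\Lambda(H^g))>\delta$ into a statement about the position of $g$ relative to the quasi-convex hull of $H$, and then to count using the cocompactness of the $H$-action. Throughout I work in the Cayley graph $\Gamma$ of $G$, which is a $\delta_0$-hyperbolic geodesic space, and I fix $\epsilon>0$ so that the chosen visual metric satisfies $d_\epsilon(\xi,\eta)\asymp e^{-\epsilon(\xi\mid\eta)_e}$, where $(\cdot\mid\cdot)_e$ denotes the Gromov product based at the identity $e$. Since $H$ is quasi-convex, I also fix a quasi-convex, $H$-invariant set $N\subset\Gamma$ (a bounded neighborhood of the orbit $H\cdot e$) on which $H$ acts cocompactly and whose limit set is $\Lambda(H)$.

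First I would reduce the diameter hypothesis to a Gromov product estimate. Using $\Lambda(H^g)=g^{-1}\Lambda(H)$ together with the $G$-invariance of the Gromov product, one has for $\xi,\eta\in\Lambda(H)$ the identity $(g^{-1}\xi\mid g^{-1}\eta)_e=(\xi\mid\eta)_g$. Hence if $\diam(g^{-1}\Lambda(H))>\delta$, then by compactness of $\Lambda(H)$ there exist $\xi,\eta\in\Lambda(H)$ realizing the diameter, so $d_\epsilon(g^{-1}\xi,g^{-1}\eta)>\delta$ and therefore $(\xi\mid\eta)_g\le C(\delta)$ for a constant $C(\delta)$ depending only on $\delta$ and $\epsilon$.

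Next comes the geometric heart of the argument. In a hyperbolic space the Gromov product $(\xi\mid\eta)_g$ agrees, up to an additive constant depending only on $\delta_0$, with the distance from $g$ to a bi-infinite geodesic $[\xi,\eta]$ joining $\xi$ to $\eta$; thus $(\xi\mid\eta)_g\le C(\delta)$ forces $g$ to lie within bounded distance of $[\xi,\eta]$. Since $\xi,\eta\in\Lambda(H)$ are limit points of the quasi-convex set $N$, the geodesic $[\xi,\eta]$ stays in a uniform neighborhood of $N$. Combining these two facts yields a constant $D=D(\delta,H)$ for which $\diam(\Lambda(H^g))>\delta$ implies $d(g,N)\le D$.

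Finally I would count. The $D$-neighborhood $N_D=\{g\in G:\ d(g,N)\le D\}$ is left-$H$-invariant, and because $H$ acts cocompactly on $N$ it acts cocompactly on $N_D$; since the left action of $H$ on $G$ is free, only finitely many left cosets $Hg$ meet $N_D$, say at most $R=R(\delta,H)$ of them. The conjugate $H^g=g^{-1}Hg$ depends only on the coset $Hg$, so the number of distinct subgroups $H^g$ with $\diam(\Lambda(H^g))>\delta$ is at most $R$, as claimed. I expect the main obstacle to be the careful bookkeeping of constants in the geometric step: making uniform the passage from the visual-metric diameter to the Gromov product, then to the distance from the geodesic $[\xi,\eta]$, and then to the distance from $N$ via quasi-convexity, so that $D$, and hence $R$, genuinely depends only on $\delta$ and $H$ and not on the particular element $g$.
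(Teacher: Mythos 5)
Your proof is correct. One point of comparison: the paper does not prove this lemma at all --- it quotes it verbatim as Corollary 2.5 of Gitik--Mitra--Rips--Sageev \cite{g-m-r-s} --- so your argument is really a reconstruction of the cited source rather than an alternative to anything in this paper. What you wrote is indeed the standard argument: translate $\diam(\Lambda(H^g))>\delta$ into an upper bound on a Gromov product via the visual metric, convert that into the statement that $g$ lies within a bounded distance $D(\delta,H)$ of the quasi-convex hull of $\Lambda(H)$ (equivalently, of a neighborhood $N$ of the orbit $H\cdot e$), and then count cosets using cocompactness of the $H$-action on the neighborhood $N_D$, noting that $H^g$ depends only on the coset $Hg$. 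All three geometric steps are sound standard facts: the equivariance identity $(g^{-1}\xi\mid g^{-1}\eta)_e=(\xi\mid\eta)_g$, the comparison of $(\xi\mid\eta)_g$ with $d(g,[\xi,\eta])$ up to an additive constant depending on the hyperbolicity constant, and the fact that a bi-infinite geodesic with both endpoints in $\Lambda(H)$ stays in a uniform neighborhood of the quasi-convex set $N$. Two cosmetic remarks: compactness of $\Lambda(H)$ is not needed to produce $\xi,\eta$ with $d_\epsilon(g^{-1}\xi,g^{-1}\eta)>\delta$ (that is just the definition of the supremum), and freeness of the left $H$-action on $G$ plays no role in the final count --- cocompactness alone gives $N_D\subset H\cdot F$ for a finite set $F$ of group elements, hence at most $|F|$ cosets $Hg$ meet $N_D$. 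What your route buys is a self-contained proof with explicit dependence $R=R(\delta,H)$ through the constants $C(\delta)$, $D(\delta,H)$, and $|F|$, where the paper simply outsources the statement.
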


Recall that a subgroup $F<G$ is malnormal if $F\cap F^g=\id$, for any $g \in G \setminus F$. We say that $F<G$ is almost malnormal if  for any $g \in G \setminus F$, the group  $F\cap F^g$ is finite. For a closed subset $P\subset \pt{G}$, by $\stab(P)<G$ we denote the subgroup of $G$ that leaves $P$ invariant as a set (if we want to emphasize that this is with respect to the group $G$ we write $\stab_G(P)$ instead). We say that a quasi-convex subgroup $F<G$ is a maximal subgroup in $G$ if $\stab{\Lambda(F)}=F$.

\begin{remark} If $F<G$ is quasi-convex, almost malnormal and infinite subgroup of $G$  then  $F$ is automatically maximal.
\end{remark}

The next lemma is the main result of this subsection.

\begin{lemma}\label{lemma-X-0} Let $G$ be a hyperbolic group and $H<G$ a separable, quasi-convex subgroup. Then there exists a finite index subgroup $\wh{G}<G$ that contains $H$ and such that $H$ is almost malnormal in $\wh{G}$.
\end{lemma}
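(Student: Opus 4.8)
The plan is to isolate the finitely many ``bad'' conjugacy patterns and then destroy them using the separability of $H$. First I would dispose of the trivial case: if $H$ is finite then $H \cap H^g$ is finite for every $g$, so $H$ is already almost malnormal in $\wh{G}=G$; hence assume $H$ is infinite. Call $g \in G$ \emph{bad} if $g \notin H$ and $|H \cap H^g|=\infty$. A direct computation gives $H^{h_1 g h_2}=(H^g)^{h_2}$ for $h_1,h_2 \in H$, and hence $H \cap H^{h_1 g h_2}=h_2^{-1}(H \cap H^g)h_2$; in particular badness depends only on the double coset $HgH$, and as the representative varies over $HgH$ the intersection $H \cap H^g$ runs through a full $H$-conjugacy class. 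The goal is then to produce a finite index $\wh{G}$ with $H<\wh{G}$ meeting no bad double coset other than $H$ itself: for such $\wh{G}$, every $g \in \wh{G}\setminus H$ has $H \cap H^g$ finite, which is exactly almost malnormality.

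The key step, and the one I expect to be the main obstacle, is to show there are only finitely many bad double cosets. Here I would combine Lemma \ref{lemma-AGM} and Lemma \ref{lemma-GMRS}. By Lemma \ref{lemma-AGM} there are finitely many $H$-conjugacy classes of infinite intersections $H \cap H^g$; fix representatives $C_1,\dots,C_k$. Given a bad double coset, its intersection is $H$-conjugate to some $C_i$, and by the remark above I may choose a representative $g$ of the coset with $H \cap H^g=C_i$ exactly. Since $C_i$ is infinite it contains a loxodromic element, so $\diam \Lambda(C_i)=:\delta_i>0$; and since $C_i=H \cap H^g \subset H^g$ we have $\Lambda(C_i)\subset \Lambda(H^g)$, whence $\diam \Lambda(H^g)\ge \delta_i$. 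Applying Lemma \ref{lemma-GMRS} with $\delta_i/2$ then shows that only finitely many subgroups of the form $H^g$ arise as these distinguished conjugates (and there are finitely many classes $i$ to consider).

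It remains to bound, for a fixed conjugate $K=H^g$, the number of bad double cosets whose distinguished representative satisfies $H^g=K$. The relevant representatives form a single left coset $N_G(H)\,g$, since $H^g=H^{g'}$ iff $g'\in N_G(H)g$. Because $H$ is infinite and quasi-convex, $N_G(H)\subset \stab(\Lambda(H))$ contains $H$ with finite index (a standard fact about quasi-convex subgroups, see \cite{g-m-r-s}); writing $N_G(H)=\bigsqcup_{j=1}^m n_j H$ and using $H n_j=n_j H$, one checks $H n_j h g H=H n_j g H$, so at most $m$ double cosets meet $N_G(H)g$. Combining with the previous paragraph, the number of bad double cosets is finite; let $g_1,\dots,g_N \notin H$ represent the nontrivial ones.

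Finally I would invoke separability. Since $H$ is separable, for each $i$ there is a finite index $G_i<G$ with $H<G_i$ and $g_i \notin G_i$. Put $\wh{G}=\bigcap_{i=1}^N G_i$, which is finite index and contains $H$. If a bad double coset $H g_i H$ met $\wh{G}$, writing an element as $h g_i h'$ with $h,h'\in H<\wh{G}$ would force $g_i \in \wh{G}$, a contradiction. Hence no $g \in \wh{G}\setminus H$ is bad, i.e. $H \cap H^g$ is finite for all such $g$, so $H$ is almost malnormal in $\wh{G}$. The single ingredient I am treating as external is that an infinite quasi-convex subgroup has finite index in the stabilizer of its limit set; everything else is bookkeeping around Lemmas \ref{lemma-AGM} and \ref{lemma-GMRS} together with the separability hypothesis.
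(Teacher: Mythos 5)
Your proof is correct and is essentially the paper's argument: both use Lemma \ref{lemma-AGM} to reduce to finitely many $H$-conjugacy classes of infinite intersections, then positivity of the diameter of the limit set of such an intersection together with Lemma \ref{lemma-GMRS} to show that only finitely many conjugates $H^g$ can carry one, and finally separability of $H$ to produce a finite-index $\wh{G}$ containing $H$ but avoiding a finite list of bad elements. The only organizational difference is that the paper spends an extra application of separability at the outset to make $H$ maximal (replacing $G$ by a finite-index subgroup in which $H=\stab_G(\Lambda(H))$, via Lemma 2.9 of \cite{g-m-r-s}), which removes the need for your double-coset and normalizer bookkeeping; your count via $[N_G(H):H]<\infty$ invokes exactly the same fact from \cite{g-m-r-s}, so the two proofs rest on identical ingredients.
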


\begin{proof}  Without loss of generality we may assume that $H$ is infinite (otherwise $H$ is almost malnormal in $G$). Since $H$ is quasi-convex, it follows from Lemma 2.9 in \cite{g-m-r-s} that 
$H$ is a finite index subgroup of $\stab_G{\Lambda(H)}$. Since $H$ is separable we can find a finite index subgroup of $G$ which contains $H$ as a maximal subgroup. So without loss of generality we may assume that $H$ is a maximal, separable, quasi-convex subgroup of $G$.

By Lemma \ref{lemma-AGM} there exists finitely many non-trivial subgroups 
$$
F_1,...,F_k<H,
$$ 
with the following property. Suppose that $g\in G \setminus H$ is  such that the intersection $H^g \cap H$ is non-trivial 
and $|H^g \cap H|=\infty$. Then there exists $h\in H$ so that 
$$
H^{gh} \cap H=F_i, 
$$
for some $1\le i \le k$. Each $F_i$ is an infinite and quasi-convex group ($F_i$ is quasi-convex as an intersection of two quasi-convex subgroups). Thus $\diam(\Lambda(F_i))>0$.  Therefore, there exists $\delta_0>0$ so that $\diam(\Lambda(F_i))>\delta_0$, for each $i$. 

Let $g \in G \setminus H$ be such that $H^g \cap H=F_i$, for some $i$. Then $\diam(\Lambda(H^g)) \ge \diam(\Lambda(F_i)) >\delta_0$. Thus, it follows from Lemma \ref{lemma-GMRS} that there are at most  $R_0=R(\delta_0)$ 
(where $R$ is the constant from Lemma \ref{lemma-GMRS})  different groups of the form $H^g$, $g \in G \setminus H$, such that $H^g \cap H=F_i$, for some $i$. Therefore, there exists elements $g_1,...,g_{R_{0}} \in G \setminus H$ such that if $H^g \cap H=F_i$, for some $g \in G$ and some $i$, then $H^g=H^{g_{j}}$ for some $1\le j \le R_0$.

Since $H$ is separable there exists a finite index subgroup $\wh{G}<G$ that contains $H$ and that does not contain any of the elements $g_1,...,g_{R_{0}}$ defined above. 
We show that $H$ is almost malnormal in $\wh{G}$. Suppose on the contrary that for some $g \in \wh{G}\setminus H$ we have $H^g \cap H$ is a non-trivial subgroup of $H$ and $|H^g \cap H|=\infty$. As we saw, 
there exists $h \in H$ such that $H^{gh} \cap H=F_i$, for some $i$. But this implies that $H^{gh}=H^{g_{j}}$ for some $j$. Thus $H^{ghg^{-1}_{j}}=H$ and therefore $ghg^{-1}_{j} \in \stab_G{\Lambda(H)}$. Recall that we assume that $H$ is maximal. It follows that  $ghg^{-1}_{j} \in H$. However, since both $g$ and $h$ belong to $\wh{G}$, and $H<\wh{G}$, we conclude that $g_{j} \in \wh{G}$, which is a contradiction. This completes the proof.

\end{proof}

\subsection{Virtually cubulating $G$ by malnormal subgroups} The following theorem is the main result of this section.

\begin{theorem}\label{thm-X-1} Let $G$ be a hyperbolic group such that every two  points in $\pt{G}$ can be separated by a  co-dimension 1, quasi-convex subgroup. 
Then there exists a torsion-free, finite index subgroup $\wt{G}<G$, and malnormal, co-dimension 1, quasi-convex subgroups $H_1,...,H_m<\wt{G}$ such that every two distinct point of $\pt{\wt{G}}$ can be separated by $H^{g}_i$, for some $g \in \wt{G}$ and $1\le i\le m$. 
\end{theorem}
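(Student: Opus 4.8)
The plan is to combine the three inputs now available---the Bergeron--Wise separation family (Theorem \ref{thm-BW}(1)), separability of quasi-convex subgroups (Corollary \ref{cor-main-1}), and the almost-malnormalization Lemma \ref{lemma-X-0})---and then to pass to a single \emph{normal}, torsion-free, finite-index subgroup that simultaneously tames all the relevant subgroups. First I would invoke Theorem \ref{thm-BW}(1) to fix finitely many co-dimension 1, quasi-convex subgroups $H_1,\dots,H_n<G$ whose $G$-conjugates separate every pair of points of $\pt G$; the same hypothesis makes $G$ cubulated, so by Corollary \ref{cor-main-1} each $H_i$ is separable. Applying Lemma \ref{lemma-X-0} to each $H_i$ produces a finite-index subgroup $\wh G_i<G$ containing $H_i$ in which $H_i$ is almost malnormal. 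Since $G$ is residually finite (Corollary \ref{cor-main-1}) and hyperbolic it is virtually torsion-free, so I can choose a torsion-free finite-index subgroup inside $\bigcap_i \wh G_i$ and replace it by its normal core in $G$, obtaining $\wt G\triangleleft G$ that is at once normal, torsion-free, of finite index, and contained in every $\wh G_i$.

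Next I would use normality to upgrade almost malnormality to genuine malnormality for all the conjugates that I need. Because $\wt G<\wh G_i$, the finite-index subgroup $H_i\cap\wt G$ inherits almost malnormality in $\wt G$; since $\wt G$ is torsion-free, finite subgroups of $\wt G$ are trivial, so $H_i\cap\wt G$ is in fact \emph{malnormal} in $\wt G$. Fixing coset representatives $t_1,\dots,t_r$ for $\wt G$ in $G$ and using $\wt G\triangleleft G$, I obtain $H_i^{t_j}\cap\wt G=(H_i\cap\wt G)^{t_j}$, where conjugation by $t_j$ is an automorphism of $\wt G$; as malnormality is automorphism-invariant, each $K_{i,j}:=H_i^{t_j}\cap\wt G$ is malnormal in $\wt G$. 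Moreover each $K_{i,j}$ is a finite-index subgroup of the quasi-convex, co-dimension 1 subgroup $H_i^{t_j}<G$, hence itself quasi-convex and co-dimension 1 in $\wt G$ (recall $\pt\wt G=\pt G$ and $\Lambda(K_{i,j})=\Lambda(H_i^{t_j})$, since commensurable subgroups share limit sets).

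Finally I would verify the separation property for the finite family $\{K_{i,j}\}$. Given distinct $p,q\in\pt\wt G=\pt G$, Theorem \ref{thm-BW}(1) supplies $g\in G$ and an index $i$ with $\Lambda(H_i^g)$ separating $p$ and $q$. Writing $g=t_j\gamma$ with $\gamma\in\wt G$, and using $\Lambda(H_i^g)=\Lambda\big((H_i\cap\wt G)^g\big)=\Lambda(K_{i,j}^{\gamma})$, I conclude that the $\wt G$-conjugate $K_{i,j}^{\gamma}$ separates $p$ and $q$. Relabelling the (finite) collection $\{K_{i,j}\}$ as $H_1,\dots,H_m$ then yields exactly the conclusion of the theorem; in the sphere-boundary setting where the $H_i$ may be taken to be surface subgroups, each $K_{i,j}$ is finite-index in a surface group and hence again a surface group, which gives the surface refinement promised in the section introduction.

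The main obstacle I anticipate is the interplay between two separate tensions: Bergeron--Wise separates pairs using conjugation by \emph{all} of $G$, whereas the conclusion must use only conjugation by $\wt G$; and Lemma \ref{lemma-X-0} delivers only \emph{almost} malnormality, whereas the theorem demands malnormality. Both are resolved by the single device of choosing $\wt G$ to be normal and torsion-free (rather than merely finite-index): normality converts the $G$-conjugacy bookkeeping into $\wt G$-conjugacy of the enlarged finite family $\{K_{i,j}\}$ and transports malnormality along the automorphisms $c_{t_j}$, while torsion-freeness collapses ``finite intersection'' to ``trivial intersection.'' Ensuring that a single such $\wt G$ works for all of $H_1,\dots,H_n$ at once---via the intersection $\bigcap_i\wh G_i$ and its normal core---is the technical heart of the argument.
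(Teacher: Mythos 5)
Your proposal is correct and takes essentially the same approach as the paper: Bergeron--Wise (Theorem \ref{thm-BW}) plus separability (Corollary \ref{cor-main-1}) feeding into Lemma \ref{lemma-X-0}, torsion-freeness to promote almost malnormality to malnormality, and a single finite-index subgroup obtained by intersection that works for all the separating subgroups at once, with the same malnormality argument ($|K^g\cap K|=\infty$ forces $|H_i^g\cap H_i|=\infty$, hence $g\in H_i$, hence $g\in K$). The only cosmetic difference is bookkeeping: you take the normal core of $\bigcap_i \wh{G}_i$ and track conjugates via coset representatives $t_j$, while the paper intersects the groups $\wh{G}_F$ over \emph{all} conjugates $F$ (using M. Hall's finiteness of bounded-index subgroups of a finitely generated group) --- and since $\wh{G}_{F^g}=\wh{G}_F^{\,g}$, that intersection is exactly the normal core you construct.
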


\begin{proof} It follows from Theorem \ref{thm-A} that $G$ is residually finite, so by passing onto a finite index subgroup if necessary, we may assume that $G$ is torsion-free.

By Theorem \ref{thm-BW}, we know that there exist  co-dimension 1, quasi-convex subgroup $F_1,...,F_m<G$ such that every two points of $\pt{G}$ can be separated by some conjugate $F^{g}_{i}$. 
Let $\wh{G}_{F_{i}}<G$ denote a finite index subgroup of $G$ that contains $F_i$ as a  malnormal  subgroup (by Lemma \ref{lemma-X-0} there is a finite index subgroup $\wh{G}_{F_{i}}<G$ that contains $F_i$ as an almost malnormal group, but since we assumed that $G$ is torsion free then $F_i$ it follows that $F_i$ is malnormal in  $\wh{G}_{F_{i}}$). If $g \in G$, we observe that the group $g^{-1}\wh{G}_{F_{i}}g=\wh{G}^{g}_{F_{i}}$ is a finite index subgroup of $G$ (of the same index as $\wh{G}_{F_{i}}$) that contains $F_{i}^g$ as a malnormal subgroup. 

Let $\mathcal{F}$ denote the collection of all groups $F^{g}_{i}$, for all $g \in G$ and $1\le i \le n$ (thus $\mathcal{F}$ is an infinite collection of groups but it contains only finitely many conjugacy classes). 
For each group $F \in \mathcal{F}$, let $\wh{G}_F<G$ denote the corresponding finite index subgroup of $G$ that contains $F$ as malnormal subgroup (that is, if $F=F^{g}_i$ then $\wh{G}_F=\wh{G}^{g}_{F_{i}}$). 
By construction, there are only finitely many conjugacy classes of groups $\wh{G}_F$, $F \in \mathcal{F}$, so there exists $N>0$ such that each group $\wh{G}_F$ has index at most $N$ in $G$. Since $G$ is hyperbolic (and thus finitely generated) there are at most finitely many different subgroups of $G$ of index at most $N$ (see $4.$ on page 128. in \cite{hall}) .  

Set 
$$
\wt{G}=\bigcap_{F \in \mathcal{F}} \wh{G}_F.
$$
Then $\wt{G}$ is a finite index subgroup of $G$. Set $H_F=F \cap \wt{G}$, for $F \in \mathcal{F}$. Then $H_F$ is a finite index subgroup of $F$,  so it follows that  $H_F<\wt{G}$ is quasi-convex and co-dimension 1 subgroup.  
Also, $\Lambda(H_F)=\Lambda(F)$, and thus  every two points in $\pt{G}$ are separated by some $H_F$ (for some $F \in \mathcal{F}$). 

We now show that each $H_F$ is a malnormal subgroup of $\wt{G}$. Let $g \in \wt{G} \setminus H_F$, and suppose that  $|H^{g}_{F} \cap H_F|=\infty$. Since $(H^{g}_{F} \cap H_F) \subset (F^{g} \cap F)$, it follows that $|F^{g} \cap F|=\infty$.  But $g \in \wt{G}<\wh{G}_F$, and since $F$ is malnormal (and thus maximal) in $\wh{G}_F$ it follows that $g \in F$. Thus, $g$ belongs to $F$ and also $g \in \wt{G}$. It follows  $g \in F \cap \wt{G}=H_F$, which is a contradiction, and we have proved that $H_F$ is  malnormal in $\wt{G}$.

On the other hand,  there are finitely many $G$-conjugacy classes of groups $F \in \mathcal{F}$, so we conclude that there are only finitely many $\wt{G}$-conjugacy classes of groups $H_F$, $F \in \mathcal{F}$. Let $H_1,...,H_m<\wt{G}$ be representatives of these finitely many conjugacy classes. Then each $H_j<\wt{G}$ is malnormal, co-dimension 1, quasi-convex subgroup, and every two  points of $\pt{\wt{G}}$ can be separated by $H^{g}_i$, for some $g \in \wt{G}$ and $1\le i\le m$. 
\end{proof}
In the special case when the boundary of $G$ is homeomorphic to $\Sp^2$ and if every two points of $\pt{G}$ can be separated by a quasi-convex, surface subgroup, the above theorem can be restated as follows:

\begin{theorem}\label{thm-X-2} Let $G$ be a hyperbolic group with $\pt{G}\approx \Sp^2$ and such that every two  points in $\pt{G}$ can be separated by a quasi-convex, surface subgroup. 
Then there exists a finite index, torsion-free subgroup $\wt{G}<G$, and malnormal,  quasi-convex, surface groups $H_1,...,H_m<\wt{G}$ such that every two distinct point of $\pt{\wt{G}}$ can be separated by $H^{g}_i$, for some $g \in \wt{G}$ and $1\le i\le m$. Moreover, elements of $\wt{G}$ act as orientation preserving homeomorphisms on $\pt{G}\approx \Sp^{2}$ (with respect to the standard orientation on $\Sp^2$).
\end{theorem}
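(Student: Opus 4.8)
The plan is to obtain Theorem \ref{thm-X-2} by specializing the proof of Theorem \ref{thm-X-1} and then making one further finite-index passage to control orientations. The first point to record is that, when $\pt{G}\approx\Sp^2$, a quasi-convex surface subgroup $H<G$ is automatically co-dimension $1$: since $H$ is quasi-convex, the inclusion $H\hookrightarrow G$ is a quasi-isometric embedding, so $\Lambda(H)$ is the homeomorphic image of $\pt{H}\approx\Sp^1$, i.e. a Jordan curve in $\Sp^2$, which by the Jordan curve theorem separates $\Sp^2$ into two components. Thus the hypotheses of Theorem \ref{thm-X-1} are met, and applying Theorem \ref{thm-BW} to the family of quasi-convex surface subgroups produces finitely many quasi-convex \emph{surface} subgroups $F_1,\dots,F_n<G$ whose $G$-conjugates separate every pair of points of $\pt{G}$.

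Next I would run the proof of Theorem \ref{thm-X-1} verbatim with these $F_i$ as input, tracking the single new feature. The groups $H_F=F\cap\wt{G}$ constructed there are finite-index subgroups of the conjugates $F=F_i^{g}$, and a connected finite cover of a closed surface of genus $\ge 2$ is again a closed surface of genus $\ge 2$ (the Euler characteristic is multiplied by the degree and stays negative); hence each $H_F$ is again a surface group. Malnormality, quasi-convexity, co-dimension $1$, and separation by $\Lambda(H_F)=\Lambda(F)$ are exactly as in Theorem \ref{thm-X-1}, so the resulting representatives $H_1,\dots,H_m$ are malnormal quasi-convex surface subgroups with the required separation property.

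Finally, to enforce the orientation condition I would introduce the homomorphism $\phi\colon G\to\Z/2$ that records whether a given element acts on $\Sp^2\approx\pt{G}$ by an orientation preserving or orientation reversing homeomorphism; this is a homomorphism because the degree of a homeomorphism of $\Sp^2$ is multiplicative under composition. Its kernel $G^{+}$ has index at most $2$, so replacing $\wt{G}$ by $\wt{G}\cap G^{+}$ preserves torsion-freeness and finite index while forcing every element of $\wt{G}$ to act orientation preservingly. I would correspondingly replace each $H_F$ by $H_F\cap G^{+}$: this is again a finite-index subgroup of $F$, hence a surface group with unchanged limit set $\Lambda(F)$, and malnormality persists on passing to a subgroup by the same computation as in Theorem \ref{thm-X-1}. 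Re-packaging into finitely many $\wt{G}$-conjugacy class representatives then gives the statement.

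The step I expect to require the most care is this last finite-index passage: one must check that intersecting with $G^{+}$ does not destroy the separation property. This works precisely because passing to a finite-index subgroup leaves the limit set of each surface group unchanged, so the very same collection of Jordan curves $\Lambda(F)$ continues to separate all pairs of points of $\Sp^2$.
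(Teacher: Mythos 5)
Your proposal is correct and follows essentially the same route as the paper: both rest on Theorem \ref{thm-X-1} (whose construction $H_F=F\cap\wt{G}$ produces surface groups, since finite-index subgroups of closed surface groups are again closed surface groups) together with an index-two passage to kill orientation-reversing elements. The only difference is ordering --- the paper replaces $G$ by its orientation-preserving index-two subgroup \emph{before} invoking Theorem \ref{thm-X-1}, so no further checks are needed, whereas you intersect with $G^{+}$ \emph{afterwards} and therefore must (and correctly do) re-verify that malnormality, the surface property, and separation by the unchanged limit sets all survive that final intersection.
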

\begin{proof} If $G$ contains an element that acts as  an orientation reversing homeomorphism on $\Sp^2$, then $G$ contains an index two subgroup whose elements act on $\pt{G}\approx \Sp^2$ as orientation preserving homeomorphisms. In this case, we replace $G$ by this index two subgroup (which we also call $G$).

By the previous theorem we can find a torsion-free, finite index subgroup $\wt{G}<G$ that contains  malnormal, quasi-convex, surface subgroups $H_1,...,H_m<\wt{G}$ such that every two distinct points of $\pt{\wt{G}}$ can be separated by $H^{g}_i$, for some $g \in \wt{G}$ and $1\le i\le m$. Denote by $\mathcal{H}$ the collection of surface subgroups  $H^{g}_i$, where $g \in \wt{G}$ and $1\le i\le m$. Then every two points in $\pt{G}$ can be separated by a group  from $\mathcal{H}$.

\end{proof}

\section{Extending the action of $G$ from $\Sp^2$ to the 3-ball $\D^3$ and the proof of Theorem \ref{thm-1} } It remains to prove Theorem \ref{thm-1}. The proof follows immediately from  Theorem \ref{thm-X-1} and the following theorem that we prove in the remainder of this section.

\begin{theorem}\label{thm-1-new} Let $G$ be a torsion-free hyperbolic group with $\pt{G}\approx \Sp^2$, which acts  by orientation preserving homeomorphisms on $\pt{G}$. Suppose that $H_i<G$, $1\le i\le k$, are quasi-convex, malnormal surface subgroups, such that every two points of $\pt{G}$ are separated by the limit set of some $G$-conjugate of some $H_i$. Then $G$ is isomorphic to the fundamental group of a 3-manifold. 
\end{theorem}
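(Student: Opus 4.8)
The plan is to promote the given convergence action of $G$ on $\pt G \approx \Sp^2$ to a convergence action by homeomorphisms on the closed ball $\D^3$ (with $\pt\D^3 = \Sp^2$) that is free and properly discontinuous on the open ball. Granting this, $M = (\D^3 \setminus \Sp^2)/G$ is an aspherical open $3$-manifold whose universal cover is the contractible open ball, so covering space theory gives $\pi_1(M) \cong G$. Thus everything reduces to building this extension, which I would assemble from an equivariant family of spanning discs, one for each limit circle --- the ``$G$-complex'' alluded to in the introduction.

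First I would record the relevant properties of the limit sets. Let $\mathcal{H}$ be the family of all $G$-conjugates of the $H_i$. For $H \in \mathcal{H}$, quasi-convexity makes $H \hookrightarrow G$ induce a topological embedding $\pt H \hookrightarrow \pt G$ with image $\Lambda(H)$; since $H$ is a surface group, $\pt H \approx \Sp^1$, so each $\Lambda(H)$ is a Jordan curve, and being co-dimension $1$ it separates $\Sp^2$ into two open discs. Malnormality controls the incidences: for quasi-convex subgroups $\Lambda(A)\cap\Lambda(B) = \Lambda(A\cap B)$ (\cite{g-m-r-s}), and since $G$ is torsion-free, distinct conjugates of a fixed $H_i$ intersect trivially and hence have disjoint limit circles; each $H$ is moreover maximal, $\stab_G\Lambda(H) = H$. (Circles coming from different $H_i$ may still cross, and the construction must tolerate transverse crossings.) Finally, applying Lemma \ref{lemma-GMRS} to each of the finitely many $H_i$ shows that for every $\delta > 0$ only finitely many $H \in \mathcal{H}$ have $\diam\Lambda(H) > \delta$; thus the circles form a null family, their diameters accumulating only at $0$. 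This null property is what keeps the filling from degenerating near $\Sp^2$ and is the source of proper discontinuity in the interior.

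Next I would span each circle by a disc, equivariantly. Each $H \in \mathcal{H}$ acts on $\Lambda(H) \approx \Sp^1$ as a cocompact convergence group, hence by Theorem \ref{thm-GCJ} topologically conjugate to a Fuchsian action on $\pt\Ha$; this lets me attach, $H$-equivariantly, a closed model disc $\overline{\Ha} \approx \D^2$ along $\Lambda(H)$, and then propagate it over $\mathcal{H}$ by $g\cdot D_{\Lambda(H)} = D_{\Lambda(H^{g^{-1}})}$, with $\stab_G\Lambda(H) = H$ and orientation-preservation of the action guaranteeing that the assignment is well defined, coherent, and $G$-equivariant. The crux is then to realize this abstract family as an honest $G$-complex: a locally finite (in the open ball), properly embedded pattern $\Sigma = \bigcup_H D_{\Lambda(H)}$ with transverse self-intersections whose complementary regions are open $3$-cells, and to prove that the resulting space is homeomorphic to $\D^3$ with the prescribed boundary action. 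I expect this to be the main obstacle. The null property should yield local finiteness away from $\Sp^2$ and disjointness/transversality should tame the intersection pattern, but the real difficulty is ruling out pathological complementary regions: one must show each complementary piece is a cell. Here the separation hypothesis does the essential work --- because every pair of boundary points is separated by some circle, $\Sigma$ accumulates onto all of $\Sp^2$ and each complementary region is pinched down along its frontier, forcing it to be a ball; making this rigorous is precisely the content of the theory of $G$-complexes.

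Finally, granting the extension, I would check that the action on $\D^3$ is a convergence action and is free on the open ball (freeness from torsion-freeness together with the freeness of the action on the contractible complex), so that $M = (\D^3\setminus\Sp^2)/G$ is an aspherical $3$-manifold with $\pi_1(M)\cong G$. This proves the theorem; moreover the malnormal, quasi-convex surface subgroups descend to embedded incompressible surfaces in $M$, which is what the application in Theorem \ref{thm-1} requires.
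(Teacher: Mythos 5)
Your overall strategy (extend the convergence action from $\Sp^2$ to a free convergence action on the ball, then take the quotient) and your treatment of a \emph{single} conjugacy class $\mathcal{H}_i$ (Jordan limit circles, disjointness from malnormality, the null property from Lemma \ref{lemma-GMRS}, spanning discs made equivariant via the Gabai--Casson-Jungreis theorem) do match the paper. But there is a genuine gap at exactly the point you flag as ``the main obstacle'': you propose to realize \emph{all} the discs for all $k$ families simultaneously as one properly embedded pattern $\Sigma$ with ``transverse self-intersections,'' and to show its complementary regions are cells because the separation hypothesis ``pinches'' them. No construction of such an equivariant simultaneous realization is given, and this is not a technicality: circles coming from different $H_i$ can cross, so their spanning discs must intersect, there may be infinitely many discs crossing a given one, and taming this intersection pattern equivariantly while proving every complementary region is an open $3$-cell is an unproven (and very hard) step. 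Your guess that ``the theory of $G$-complexes'' is what makes this rigorous is wrong about what that theory actually does.

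The paper's $G$-complex machinery is designed precisely to \emph{avoid} superimposing the disc families geometrically. For each single class $\mathcal{H}_i$ the circles are first made round (Theorem \ref{thm-whyburn}) and spanned by disjoint geodesic discs, whose complementary components are hyperbolically convex, hence balls; this gives one free convergence $G$-complex per class (Lemma \ref{lemma-imp}). Two such complexes are then combined \emph{abstractly}: the second complex is copied into each complementary cell $V$ of the first via the marking homeomorphisms $\phi^V_1$, so its discs are never placed in their original position and never meet the first family at all (the ``refinement'' construction, with convergence preserved by Lemma \ref{lemma-glava}). Iterating over $i=1,\dots,k$, cell stabilizers of the final complex are intersections of cell stabilizers (Proposition \ref{prop-inter}), and it is \emph{here} -- not in proving cells are balls -- that the separation hypothesis enters: a nontrivial element stabilizing a cell would have its two fixed points unseparated by any limit circle, a contradiction. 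Trivial stabilizers then allow the radial extension inside each cell to produce the free convergence action on $\overline{\D}^3$ (Proposition \ref{prop-vazno}). So your proposal misassigns the role of the separation hypothesis and replaces the paper's key combinatorial device with an unjustified geometric realization; as written, the argument does not go through.
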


We briefly outline an idea of the proof. The group $G$ acts as a convergence group on $\Sp^2=\pt{\D}^3$. For each surface subgroup $H_i<G$ we find a disjoint collection of embedded 2-discs in $\D^3$ that are bounded by the limit sets of $G$-conjugates of $H_i$. We show that the action of $G$ can be extended from $\Sp^2$ to a convergence action on this new subset of $\D^3$. 

This is an example of a $G$-complex which by definition consists of a closed subset $\Kr\subset \overline{\D}^3$ (such that $\Sp^2 \subset \Kr$ and that each component of $\D^3 \setminus \Kr$ is a topological disc) and a $G$-action on $\Kr$ (in the above example the closed set $\Kr$ is the union of the embedded 2-discs and $\Sp^2$). 
We develop a short theory of $G$-complexes and define an ``intersection" of two $G$-complexes. This is defined in an intelligent way and not in the obvious (and inadequate) way by taking the union of the corresponding sets  $\Kr$'s.  We also show that if the stabilizer of each connected component of $\overline{\D}^3 \setminus \Kr$ is trivial then the $G$-action extends to $\overline{D}^3$.

We finish the argument by taking the ``intersection" of the $G$-complexes associated with $H_i$'s. This new complex has the property that the stabilizer of each connected component of $\D^3\setminus \Kr$ is trivial, and this property follows from the assumption that every two points of $\pt{G}$ can be separated by the limit set of a conjugate of some $H_i$. Therefore, we can extend the action of $G$ to $\D^3$ (we show that this action is free and convergence) and the required 3-manifold is constructed as the quotient of $\D^3$ by this $G$-action.

\subsection{Notation and basic definitions} Let $X$ be a compact metric space and let $F$ be a subgroup of the group $\Homeo(X)$ of homeomorphisms of $X$. We say that $F$ is a convergence group
if for every sequence of different $f_n \in F$, there exists a subsequence $f_{n_{k}}$ and points $a,b \in X$ such that $f_{n_{k}} \to a$ uniformly on compact subsets of $X\setminus \{b\}$. 
Note that in this case the sequence of inverse maps  $f^{-1}_{n_{k}}$ converges to $b$ uniformly on compact subsets of $X\setminus \{a\}$. 

Let $G$ denote a group. A $G$-action on metric space $X$ is a monomorphism $\mu:G \to \Homeo(X)$ (we assume that $G$-actions are effective, that is,  $\mu$ is a monomorphism). 
As usual, we use the convention  $\mu(fg)=\mu(g)\circ \mu(f)$, $f,g \in G$.  We say that a $G$-action on $X$ is a convergence $G$-action if the corresponding group of homeomorphisms $\mu(G)<\Homeo(X)$ is a convergence group.

In what follows we identify $\Sp^2$ with the boundary of the open unit 3-ball $\D^3$, that is $\Sp^2=\pt{\D}^3$. The closed 3-ball is $\overline{\D}^3$. We consider $\overline{\D}^3$ (and all of its subsets) as the metric space with the metric it inherits from $\R^3$ (here $\R^3$ is equipped with the standard Euclidean metric). We write $d(x,y)$ and $\diam(S)$  to denote respectively the distance between points $x$ and $y$  and the diameter of a set $S \subset \overline{\D}^3$  with respect to this metric on $\overline{\D}^3$ (or its subsets).

\subsection{G-Complex}  We start by giving a definition of a generalized cell decomposition in $\overline{\D}^{3}$ (we do not use or rely on any of the standard theory about cell complexes).

\begin{definition} A pair $(\Kr,\Ur)$ is called a generalized cell decomposition of $\D^3$ if the following holds:

\begin{enumerate} 

\item $\Kr $ is a closed subset of $\overline{\D}^3$ and $\Kr$ contains $\Sp^2=\pt{\D}^3$.

\item By  $\Ur$ we denote the collection of connected components of $\D^3 \setminus \Kr$. We require that every open set $U \in \Ur$ is homeomorphic to $\D^3$ and that its boundary $\pt{U}$ is homeomorphic 
to $\Sp^2$.

\item For any $\delta>0$, there exists $N=N(\delta)$ such that there are at most $N$ components in $\Ur$ whose diameter is greater than $\delta$.

\end{enumerate}
\end{definition}

There are two obvious examples of generalized cell decompositions. The first one is when the closed set $\Kr$ is as small as possible, that is $\Kr=\Sp^2$. Then $\Ur$ contains $\D^3$ and this is the only set in $\Ur$. 
The second example is when $\Kr$ is as large as possible, that is $\Kr=\overline{\D}^3$. Then $\Ur$ is the empty collection. 

We say that a generalized cell complex $(\Kr,\Ur)$ is a refinement of a generalized cell complex $(\Kr',\Ur')$ if 
$\Kr'\subset \Kr$ (clearly this induces a partial order on the collection of all generalized cell complexes and two examples we mentioned are respectively the maximal and minimal element  with respect to this partial order).

Let $G$ denote a group.  A $G$-action on a generalized cell complex $(\Kr,\Ur)$  is a $G$-action $\mu:G \to \Homeo(\Kr)$ such that:
\begin{enumerate}
\item  $\mu(g)(\Sp^2)=\Sp^2$ for each $g \in G$.
\item For each $g \in G$ and every $U \in \Ur$, there exists $U' \in \Ur$ such that $\mu(g)(\pt{U})=\pt{U'}$.
\end{enumerate}

Thus, every $G$-action on $(\Kr,\Ur)$ induces a $G$-action on $\Sp^2$. We will only consider $G$-actions such that the induced action on $\Sp^2$ is by orientation preserving homeomorphisms. 

Fix a $G$-action $\mu:G \to \Homeo(\Kr)$ on a generalized cell complex $(\Kr,\Ur)$. We say that this $G$-action $\mu$ is free if for every $g \in G$, $g \ne \id$, the homeomorphism $\mu(g)$ has no fixed points in $\Kr \cap \D^3$.
Fix $U \in \Ur$. We let $g \in \stab_{\mu}(U)$ if $\mu(g)(\pt{U})=\pt{U}$. The stabilizer $\stab_{\mu}(U)$ is a subgroup of $G$.

Next, we define the notion of a $G$-complex.

\begin{definition}\label{def-rev} Let $G$ denote a group and let $(\Kr,\Ur)$  be a generalized cell complex in $\D^3$. We say that the triple $(\mu,\Kr,\Ur)$ is a $G$-complex if $\mu:G \to \Homeo(\Kr)$ is a $G$-action on $(\Kr,\Ur)$  with the following property:
\begin{enumerate} 

\item  For every $U \in \Ur$ there  exists a homeomorphism $\phi^U:(\overline{\D}^3,\Sp^2) \to (\overline{U},\pt{U})$, between the corresponding pairs, 
such that the homeomorphism  $\phi^U:\Sp^2 \to \pt{U}$ fixes every point in $\pt{U}\cap \Sp^2$, and $\phi^U$ is equivariant with respect to the group $\mu(G)$, that is:
$$
\mu(g)\circ \phi^U=\phi^{\mu(g)(U)} \circ \mu(g),\, \text{on}\,\, \Sp^2, \,\, \text{for every}\,\,  g \in G.
$$
\end{enumerate}

If the $G$-action $\mu$ is free we say that $(\mu,\Kr,\Ur)$ is a free $G$-complex.

\end{definition}

Given a $G$-complex $(\mu,\Kr,\Ur)$, for $U \in \Ur$ there may exist more than one choice for the corresponding homeomorphism $\phi^U:(\overline{\D}^3,\Sp^2) \to (\overline{U},\pt{U})$. If we want to remember  choices of homeomorphisms $\phi^U$, $U \in \Ur$,  then we  say that the 4-tuple $(\mu,\Kr,\Ur,\phi^{\Ur})$ is a {\it marked $G$-complex}.

Observe that $g \in G$ induces a permutation on $\Ur$ by $\mu(g)(U)=V$ if $\mu(g)(\pt{U})=\pt{V}$ (it follows from the definition of a generalized cell complex that if $\pt{V}_1=\pt{V}_2$ for some $V_1,V_2 \in \Ur$ then $V_1=V_2$).

\begin{definition} We say that a $G$-complex $(\mu,\Kr,\Ur)$ is a convergence $G$-complex if the group $\mu(G)$ acts on  $\Kr$ as  a convergence group. 
\end{definition}

We observe that a convergence $G$-complex is free providing that $G$ has no torsion. This is seen as follows. 
Let $(\mu,\Kr,\Ur)$ be a convergence $G$-complex. Then for each infinite sequence of different elements $g_n \in G$ there are points $a,b \in \Kr$ such that $\mu(g_n) \to a$ uniformly on compact subsets of $\Kr \setminus \{b\}$. Since $\Sp^2 \subset \Kr$ and $\mu(g)(\Sp^2)=\Sp^2$ for each $g \in G$, it follows that $a,b \in \Sp^2$. If the group $G$ has no torsion it follows that for $g \in G \setminus \{\id \}$, the homeomorphism $\mu(g)$ has no fixed points in $K \cap \D^3$.

\begin{proposition}\label{prop-lako} Let $\Kr_0=\overline{\D}^3$ and let $(\mu,\Kr_0,\Ur_0)$ be a convergence and free $G$-complex (here $\Ur_0$ denotes an empty collection). 
Then the quotient  $M \approx \D^{3}/ \mu(G)$ is a $3$-manifold whose  fundamental group $\pi_1(M)$ is isomorphic to $G$.
\end{proposition}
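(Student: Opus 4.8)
The plan is to show that the convergence, free action $\mu:G\to\Homeo(\overline{\D}^3)$ is in fact properly discontinuous, so that the quotient is a manifold covered by $\D^3$. First I would observe that since $G$ is torsion-free and the action is free, no nontrivial $\mu(g)$ has a fixed point in $\D^3$; the convergence property then forces any accumulation of the limit points $a,b$ of a sequence $\mu(g_n)$ to lie on $\Sp^2$. The key technical step is to upgrade this to proper discontinuity of the action on the \emph{open} ball $\D^3$: I would argue that for any compact $C\subset\D^3$, only finitely many $g\in G$ satisfy $\mu(g)(C)\cap C\ne\emptyset$. Suppose not; then an infinite sequence of distinct $g_n$ would have $\mu(g_n)(x_n)=y_n$ with $x_n,y_n\in C$. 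Passing to a convergence subsequence, $\mu(g_n)\to a$ uniformly on compact subsets of $\overline{\D}^3\setminus\{b\}$ with $a,b\in\Sp^2$; since $C$ is a compact subset of the open ball it stays a definite distance from $\Sp^2$ and hence from $b$, so $\mu(g_n)(x_n)\to a\in\Sp^2$, contradicting $y_n\in C$. This is the step I expect to be the heart of the argument, and the main subtlety is being careful that the convergence is uniform on $C$ (so that the images of all of $C$, not just of a single point, are eventually pushed toward $a$).

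Once proper discontinuity on $\D^3$ is established, the action of $\mu(G)$ on $\D^3$ is free and properly discontinuous by homeomorphisms. I would then invoke the standard covering space theory: the quotient map $\D^3\to M:=\D^3/\mu(G)$ is a covering map, $M$ is a Hausdorff $3$-manifold (it is locally Euclidean since it is locally homeomorphic to $\D^3$, and properness gives the Hausdorff and manifold structure), and $\D^3$ is its universal cover because $\D^3$ is simply connected. The deck transformation group of this covering is precisely $\mu(G)$.

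Finally, the identification $\pi_1(M)\cong G$ follows from the fundamental theorem relating deck transformations to the fundamental group: for a universal covering $\widetilde{M}\to M$ with simply connected total space, the group of deck transformations is isomorphic to $\pi_1(M)$. Here the deck group is $\mu(G)$, and since $\mu$ is a monomorphism (the $G$-action is effective by our convention on $G$-actions), we get $\pi_1(M)\cong\mu(G)\cong G$. The only points requiring care beyond routine covering-space bookkeeping are verifying that $M$ is Hausdorff and that the covering is genuinely regular with deck group exactly $\mu(G)$; both follow cleanly from freeness together with the proper discontinuity proved in the first step, which is why I regard that first step as the main obstacle.
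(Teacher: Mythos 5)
Your proof is correct and takes essentially the same route as the paper's: the paper's (much terser) proof likewise notes that freeness gives a fixed-point-free action on $\D^3$, that the convergence property yields proper discontinuity on $\D^3$, and then concludes that $M \approx \D^{3}/\mu(G)$ is a $3$-manifold with $\pi_1(M)\cong G$ because $\D^3$ is simply connected; your contribution is simply to write out the proper-discontinuity implication that the paper treats as standard. One small repair: the fact that the limit points $a,b$ lie on $\Sp^2$ does not follow from freeness or torsion-freeness (the limit points of a sequence of distinct elements of a convergence group need not be fixed points of any element), but rather from the invariance $\mu(g)(\Sp^2)=\Sp^2$ together with compactness of $\Sp^2$ --- exactly the observation the paper records immediately before the proposition.
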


\begin{proof} The homeomorphism $\mu(g)$  (for a non-trivial elements $g \in G$) has no fixed points in $\D^3$, thus the $G$-action on $\D^3$ is free.  
Since the $G$-complex is convergence, the group $\mu(G)$ is a convergence group on $\overline{\D}^3$ and therefore the $G$-action is properly discontinuous in $\D^3$. So, the quotient  $M \approx \D^{3}/ \mu(G)$ is a $3$-manifold. Since $\D^3$ is contractible we have $\pi_1(M) \approx G$.
\end{proof}

\subsection{Refinement of a $G$-complex}  We say that a $G$-complex $(\mu,\Kr,\Ur)$ is a refinement of a $G$-complex $(\mu',\Kr',\Ur')$ if $\Kr' \subset \Kr$ and $\mu(g)=\mu'(g)$ on $\Kr'$ (one can show that this induces a partial order on the collection of all $G$-complexes). Then the generalized cell complex $(\Kr,\Ur)$ is a refinement of $(\Kr',\Ur')$. Moreover, if $U \subset U'$ for some $U \in \Ur$ and $U'\in \Ur'$, then $\stab_{\mu}(U)<\stab_{\mu'}(U')$.

The following lemma is the main tool we will use to prove that  $G$-complexes are convergence.

\begin{lemma}\label{lemma-glava} Suppose that  a $G$-complex $(\mu,\Kr,\Ur)$ is a refinement of a convergence $G$-complex $(\mu',\Kr',\Ur')$. If for every $U' \in \Ur'$ the group $\mu(\stab_{\mu'}(U'))$ is a convergence group on $\overline{U'} \cap \Kr$, then
$(\mu,\Kr,\Ur)$ is a convergence $G$-complex too.
\end{lemma}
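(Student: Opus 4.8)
The plan is to prove that the refined $G$-complex $(\mu,\Kr,\Ur)$ is convergence by testing the defining convergence property on an arbitrary sequence of distinct elements $g_n \in G$, and deducing the required dynamics on $\Kr$ from the already-known dynamics on the coarser complex $\Kr'$. Since $(\mu',\Kr',\Ur')$ is convergence and $\Kr'\supset \Sp^2$, after passing to a subsequence there exist points $a,b \in \Sp^2$ such that $\mu'(g_n)\to a$ uniformly on compact subsets of $\Kr'\setminus\{b\}$, and $\mu'(g_n^{-1})\to b$ uniformly on compact subsets of $\Kr'\setminus\{a\}$. Because $\mu$ and $\mu'$ agree on $\Kr'$ (in particular on $\Sp^2$), these same limit points $a,b$ govern the behavior of $\mu(g_n)$ on $\Sp^2$. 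My goal is to upgrade this to convergence of $\mu(g_n)$ on all of $\Kr$, i.e. to show $\mu(g_n)\to a$ uniformly on compact subsets of $\Kr\setminus\{b\}$.

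The key geometric idea is to control, for each $U'\in\Ur'$, the image $\mu(g_n)(\overline{U'}\cap\Kr)$. First I would fix a compact set $C\subset \Kr\setminus\{b\}$ and a small ball around $b$; I want to show that all but finitely many pieces $\overline{U'}\cap \Kr$ get mapped by $\mu(g_n)$ into an arbitrarily small neighborhood of $a$. Property (3) of a generalized cell decomposition is crucial here: for each $\delta>0$ only finitely many $U'\in\Ur'$ have $\diam(\overline{U'})>\delta$, and the same bound passes to the refined pieces contained in them. So I would split $\Ur'$ into the finitely many ``large'' cells and the remaining ``small'' cells. For the small cells, each $\partial U'$ is close to its trace on $\Sp^2$, so the convergence $\mu'(g_n)\to a$ on $\Sp^2$ forces the whole piece $\overline{U'}\cap\Kr$ near $a$ once $g_n$ is large, except when $\partial U'$ meets the small neighborhood of $b$; the finiteness of large diameters keeps the exceptional cells under control.

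For the finitely many ``large'' cells $U'$, and for any cell whose closure meets the neighborhood of $b$, I would invoke the hypothesis directly: the group $\mu(\stab_{\mu'}(U'))$ acts as a convergence group on $\overline{U'}\cap\Kr$. The point is that if infinitely many $g_n$ send a fixed reference cell $U_0'$ to the same cell $U'$, then consecutive such $g_n$ differ by elements of $\stab_{\mu'}(U_0')$, so the restricted dynamics on $\overline{U_0'}\cap\Kr$ is governed by a convergence action; combined with the equivariance built into the $G$-complex structure (the homeomorphisms $\phi^{U}$ conjugating the action on each piece to the action on $\overline{\D}^3$), this yields uniform convergence on that piece. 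Here I would use that the limit points on $\Sp^2$ determined by $\mu'$ pin down where the images concentrate. Assembling the small-cell estimate and the finitely many large/critical-cell estimates gives uniform convergence of $\mu(g_n)$ to $a$ on the compact set $C$, as required.

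The main obstacle I anticipate is the bookkeeping at the ``critical'' cells: those pieces $\overline{U'}\cap\Kr$ whose closures straddle the repelling point $b$, where the image under $\mu(g_n)$ need not shrink to $a$ uniformly just from the $\Sp^2$ dynamics. Controlling these requires genuinely using the per-cell convergence hypothesis together with equivariance and the diameter-finiteness condition, rather than a naive triangle-inequality estimate. I would handle this by arguing that only boundedly many cells have closures meeting a fixed neighborhood of $b$ with large diameter, and that on each such cell the stabilizer-convergence hypothesis supplies the needed uniform control; the delicate part is ensuring the estimates are uniform in $n$ simultaneously across the finitely many relevant cells and across the refinement of the small cells.
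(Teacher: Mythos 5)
Your proposal is correct and follows essentially the same route as the paper's proof: extract the points $a,b$ from the convergence of $\mu'$ on $\Kr'$, use property (3) of a generalized cell decomposition to reduce the problem to finitely many problematic cells near $b$, and control the cells whose images remain large by writing the relevant $g_n$'s as a fixed element composed with an element of a cell stabilizer, whose action on $\overline{U'}\cap\Kr$ is a convergence action by hypothesis, finally pinning the attracting point to $a$ via the already-known dynamics on $\pt{U'}\subset\Kr'$. The one repair needed is in your small-cell step: cells of a generalized cell decomposition need not meet $\Sp^2$, so instead of the ``trace on $\Sp^2$'' you should use uniform convergence on the compact set $\Kr'\setminus B(b,\delta)\supset \pt{U'}$ together with the elementary fact that a cell is contained in the convex hull of its boundary sphere, so that control of $\mu(g_n)$ on $\pt{U'}$ controls the whole image piece --- which is exactly how the paper argues, via boundary points $q\in\pt{V}\cap C\subset\Kr'$.
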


\begin{proof} Let $g_n \in G$ be a sequence of different elements. We need to show that there are points $a,b \in \Sp^2$ such that (after passing to a subsequence if necessary)  $\mu(g_n) \to a$ uniformly on compact subsets of $\Kr \setminus \{b\}$. 
By the assumption, we know that $\mu'(G)$ is a convergence group on $\Kr'$ so it follows that there are points $a,b \in \Sp^2$ such that (after passing to a subsequence which we also denote by $g_n$) 
$\mu'(g_n) \to a$ uniformly on compact subsets of $\Kr' \setminus \{b\}$. Then the sequence of inverse maps also converges and we have $(\mu'(g_n))^{-1}= \mu'(g^{-1}_n) \to b$ uniformly on compact subsets of $\Kr' \setminus \{a\}$.

Since $\Kr'\subset \Kr$ and $\mu(G)=\mu'(G)$ on $\Kr'$, we conclude that  $\mu(g_n) \to a$, uniformly on compact subsets of  $\Kr' \setminus \{b\}$. Similarly,
$\mu(g^{-1}_n) \to b$, uniformly on compact subsets of $\Kr' \setminus \{a\}$. Hence, for a given compact set $C \subset \Kr \setminus \{b\}$  we know that for every $\epsilon>0$ there exists $n'(\epsilon,C)$ such that 
\begin{equation}\label{eq-rev-0}
d(\mu(g_n)(x),a)<\epsilon, \, \text{for}\,    n>n'(\epsilon,C) \,  \text{and}\,  x \in C \cap \Kr'. 
\end{equation}
Similarly, for a given compact set $D \subset \Kr \setminus \{a\}$  we know that for every $\delta>0$ there exists $n''(\delta,D)$ such that
\begin{equation}\label{eq-rev-0'}
d(\mu(g_n)(x),b)<\delta, \, \text{for}\,    n>n''(\delta,D) \,  \text{and}\,  x \in D \cap \Kr'. 
\end{equation}

It remains to show that there is a function $n(\epsilon,C)$ such that 
\begin{equation}\label{eq-0}
d(\mu(g_n)(x),a)<\epsilon,\,\, \text{for}\,\, n>n(\epsilon,C)\,\, \text{and every}\,\, x \in C.
\end{equation}

\vskip .3cm
For $0<\alpha<1$, let $\overline{\D}^{3}(\alpha)$ denote the closed ball of radius $\alpha$ (with the same center as $\D^3$). Let $C_{\alpha}$ be the compact set defined by
$$
C_{\alpha}\cap \D^{3}=K \cap \overline{\D}^{3}(\alpha),
$$ 
and 
$$
C_{\alpha}\cap \Sp^2=\Sp^{2} \setminus \{ \text{a ball of spherical radius}\, (1-\alpha) \, \text{centered at}\, b \}. 
$$
The collection $C_{\alpha}$ is an exhaustion of $K \setminus \{b\}$ by compact sets. Thus, it is enough to prove (\ref{eq-0}) for each $C_{\alpha}$. From now on we assume that $C=C_{\alpha}$ for some $\alpha \in (0,1)$.
Observe that each such $C$ has the property that if $C \cap (U' \cap \Kr)$ is non-empty for some $U' \in \Ur'$, then either $C \cap \pt{U'}\cap \D^3$ is non-empty or $U'=\D^3$ and thus $\pt{U'}=\Sp^2$. 
(The special choice of compact sets $C$ was made for this reason alone.) 

We similarly define the sets $D_{\alpha}$  that give an exhaustion of $K \setminus \{b\}$ by compact sets). 
\vskip .3cm

Fix $\epsilon>0$.  By the definition of a generalized cell decomposition, there are at most finitely many sets $W_1,..,W_k \in \Ur'$ whose diameter is $\ge \epsilon/2$ (here $k=k(\epsilon)$). 
We have the following claim.

\begin{claim} There exist $M(\epsilon,C), N(\epsilon,C) \in \N$, and sets $V_j \in \Ur'$, $j=1,...,M(\epsilon,C)$, with the following properties.  Let $V \in \Ur'$ and assume that
\begin{itemize}
\item $\overline{V}$ intersects the compact set $C$ (that is $\overline{V} \cap C \ne \emptyset$). 
\item There exists $n>N(\epsilon,C)$ such that $\mu(g_n)(V)=W_i$, for some $i=1,...,k$.
\end{itemize}
Then $V=V_j$ for some  $j$. 
\end{claim}
\begin{proof} Since $C$ is compact and it does not contain $b$, it follows that $C$ does not intersect a sufficiently small open ball around $b$. Let $\delta=\delta(\epsilon,C)$ denote the radius of this ball. 
We define the sets $V_j$ in a similar way we defined $W_i$'s. By the definition of a generalized cell decomposition, there are at most finitely many sets $V_1,..,V_M \in \Ur'$ whose diameter is $\ge \delta/2$ (here $M=M(\epsilon,C)$ depends only on $W_i$'s and $C$). 

Fix a compact set  $D \subset K \setminus \{a\}$  that intersects each $W_i$ (we can choose  $D=D_{\alpha}$ for some $\alpha=\alpha(\epsilon) \in (0,1)$).  Firstly, from (\ref{eq-rev-0'}) it follows that
$$
d(\mu(g^{-1}_n)(x),b)<\delta/2, 
$$
for every $n>n''(\delta/2,D)$ and  $x \in D$ (the constant $n''(\delta/2,D)$ was defined in (\ref{eq-rev-0}) above). The constants $\delta$ and $D$ depend only on $\epsilon$ and $C$ and we set 
$N(\epsilon,C)=n''(\delta/2,D)$.

Since $D$ intersects each $W_i$ it follows that $D$ intersects $\pt{W}_i$, that is there exists $p_i \in \pt{W}_i \cap D$. 
Then, because $\pt{W}_i\subset \Kr'$ it follows $p_i \in \Kr'\cap D$ and thus
the inequality 
\begin{equation}\label{eq-revi-1}
d(\mu(g^{-1}_{n})(p_i),b)<\delta/2, \, \text{holds for}\, n>N(\epsilon,C),
\end{equation}
and some $p_i \in \pt{W}_i$.

Suppose now that that $V \in \Ur'$ and that $\mu(g_n)(V)=W_i$, for some $n>N(\epsilon,C)$. If $V$ is not one of the sets $V_j$ then 
$$
\diam(V)=\diam\big( \mu(g^{-1}_n)(W_i) \big) \le \delta/2.
$$
Together with (\ref{eq-revi-1}), this yields the inequality
$$
d(V,b)=d(\mu(g^{-1}_n)(W_i),b)<\delta,
$$
and therefore $V$ does not intersect $C$. This proves the claim.

\end{proof}

Fix $x \in C \setminus \Kr'$. Then $x \in V$ for some $V \in \Ur'$. We partition the sequence $g_n$ into $(k+1)$ new sequences $g^{i}_n$, $0 \le i \le k$ as follows. By $g^{0}_n$ we denote the subsequence of $g_n$ such that 
\begin{equation}\label{eq-1}
\diam(\mu(g^{0}_n)(\pt{V})) \le \frac{\epsilon}{2}. 
\end{equation}
By $g^{i}_n$ we denote the subsequence  of $g_n$ such that $g^{i}_n(V)=W_i$. At least one the sequences $g^{i}_m$ is infinite but some may be finite (or empty).

\begin{remark} Clearly, the element $g^{i}_{n}$ is not necessarily defined for every $n \in \N$, so we consider the sequence $g^{i}_{n}$  as a sequence indexed by the corresponding subset of $\N$ which may be finite.
\end{remark}

The following is an important observation and we formulate it as a claim.

\begin{claim} If $V$ is not one of the sets $V_j$ from the previous claim then for each  $1\le i \le k$ the corresponding sequence $g^{i}_n$ is vacuous for $n >N(\epsilon,C)$.
\end{claim}
\begin{proof}  If $V$ is not one of the sets $V_j$ and $n >N(\epsilon,C)$ then $\mu(g_n)(V)$ is disjoint from any $W_i$. This proves the claim. 
\end{proof} 
\vskip .3cm
Recall that we fix  $x \in C \setminus \Kr'$, and that $V \in \Ur'$ is such that $x \in V$.  From (\ref{eq-rev-0}) it follows that
$$
d(\mu(g_n)(x),a)<\epsilon/2, 
$$
for every $n>n'(\epsilon/2,C)$. Since $C$ intersects $V$ there exists $q \in \pt{V} \cap C$. Then, because $\pt{V} \subset \Kr'$ it follows $q \in \Kr'\cap C$ and thus
the inequality 
\begin{equation}\label{eq-revi-1'}
d(\mu(g_{n})(q),a)<\epsilon/2, \, \text{holds for}\, n>n'(\epsilon/2,C),
\end{equation}
and some $q \in \pt{V}$.

On the other hand, by definition we have
$$
\diam\big( \mu(g^{0}_n)(V) \big) \le \epsilon/2.
$$
Together with (\ref{eq-revi-1'}), this yields the inequality
$$
d(\mu(g^{0}_n(V),a)<\epsilon,
$$
and therefore we have 
\begin{equation}\label{eq-revi-2}
d(\mu(g^{0}_n)(x),a)<\epsilon, \,\, \text{for every}\,\,  n>n'(\epsilon/2,C).
\end{equation}

\vskip .3cm
Fix $1 \le i \le k$. We will show that there exists $n_{i}(\epsilon,C,V)$ such that 
\begin{equation}\label{eq-rev-1}
d(\mu(g^{i}_n)(x),a)<\epsilon, \, \text{for}\, n>n_{i}(\epsilon,C,V).
\end{equation}
\vskip .3cm
\begin{remark} Note that if $V$ is not one of the sets $V_j$ from the first claim above then $g^{i}_n$ does not exist for $n>N(\epsilon,C)$. Thus for each such $V$ we have that (\ref{eq-rev-1}) holds for  $n_{i}(\epsilon,C,V)=N(\epsilon,C)$.
\end{remark}
\vskip .3cm
Set
$$
h^{i}_{n}=g^{i}_n \circ (g^{i}_{n_{i}})^{-1},
$$
where $n_{i}$ is the smallest number for which the element $g^{i}_{n}$ is defined. Then $\mu(h^{i}_{n})(W_i)=W_i$ and so $h^{i}_{n} \in \stab_{\mu'}(W_i)$. 
Recall that $g_n \to a$ on compact sets in $\Kr' \setminus \{b\}$. In terms of the sequence $h^{i}_n$ this means that
\begin{equation}\label{eq-revis}
h^{i}_n(z) \to a \, \,\text{uniformly on compact sets in} \,  \overline{W}_{i} \cap (\Kr' \setminus \{b_i\}), 
\end{equation}
where $b_i=\mu(g^{i}_{n_{i}})(b)$.

Let  $x_i=\mu(g^{i}_{n_{i}})(x)$. The claim (\ref{eq-rev-1}) can be rewritten as the following: There exists $n_{i}(\epsilon,C,V)$ such that
\begin{equation}\label{eq-rev-2}
d(\mu(h^{i}_n)(x_i),a)<\epsilon, \,\, \text{for}\, n>n_{i}(\epsilon,C,V).
\end{equation}

Suppose that (\ref{eq-rev-2}) does not hold for any $n_i(\epsilon,C,V)$. Then there exist an infinite subsequence $h^{i}_{m}$ of $h^{i}_{n}$, and a sequence of points $z_m \in \mu(g^{i}_{n_{i}})(C) \cap \overline{W}_i$,  such that 
\begin{equation}\label{eq-rev-3}
d(\mu(h^{i}_m)(z_m),a) \ge \epsilon.
\end{equation}

By the assumption of the lemma, the group $\mu(\stab_{\mu'}(W_i))$ is a convergence group on $\overline{W_i} \cap \Kr$. Thus,  after passing to a subsequence of $h^{i}_{m}$ if necessary, we find that 
there exist  points  $a', b' \in \overline{W_i} \cap \Kr$, such that the sequence $h^{i}_{m}$ converges to $a'$ uniformly on compact subsets of $\overline{W_i} \cap (\Kr \setminus \{b'\})$. 

On the other hand, from (\ref{eq-revis}) we get that $h^{i}_n(z) \to a$ uniformly on compact sets in $\pt{W}_{i} \setminus \{b_i\}$ (relying on the inclusion  $\pt{W}_{i} \subset \overline{W}_{i} \cap \Kr'$). Since  $\pt{W}_{i} \setminus \{b_i\}$ is non-empty (the boundary of a set from $\Ur'$ contains more than one point), it follows that $a=a'$.  Therefore $h^{i}_{m}$ converges to $a$ uniformly on compact subsets of  $\overline{W_i} \cap (\Kr \setminus \{b_i\})$, and in particular, the  sequence 
$h^{i}_{m}$ converges to $a$ uniformly on $\mu(g^{i}_{n_{i}})(C) \cap \overline{W}_i$. But this contradicts (\ref{eq-rev-3}) and we have shown that (\ref{eq-rev-2}) holds for some $n_i(\epsilon,C,V)$.

Above we proved that for $n>n_{i}(\epsilon,C,V)$ and $x \in C \cap \overline{V}$ we have $d(\mu(g^{i}_n)(x),a)<\epsilon$. 
Let 
$$
n^{j}(\epsilon)=\max\{n'(\epsilon/2,C), N(\epsilon,C), n_1(\epsilon,C,V_j)\},
$$
and $n(\epsilon)=\max \{n^{1}(\epsilon),...,n^{M(\epsilon,C)}(\epsilon))\}$. Then (\ref{eq-0}) holds for this choice of $n(\epsilon)$ and we are finished.

\end{proof}

\subsection{Building new $G$-complexes out of old}  Let  $(\mu_1,\Kr_1,\Ur_1,\phi^{\Ur_{1}}_{1})$ and $(\mu_2,\Kr_2,\Ur_2,\phi^{\Ur_{2}}_{2})$ denote two marked $G$-complexes. We assume that 
\begin{equation}\label{eq-ass}
\mu_1(g)(x)=\mu_2(g)(x), \,\, \text{for every} \,\, x \in \Sp^2, \,\, \text{and}\,\, g \in G.
\end{equation}
We define a new marked $G$-complex $(\mu,\Kr,\Ur,\phi^{\Ur})$ out of these two $G$-complexes as follows. 

Let
$$
\Ur=\{ U: \, U=\phi^{V}_{1}(W), \, \text{where}\, V \in \Ur_1\, \text{and}\, W \in \Ur_2\},
$$
and define the corresponding markings $\phi^U:(\overline{\D}^3,\Sp^2) \to (\overline{U}, \pt{U})$, $U \in \Ur$, by $\phi^U=\phi^{V}_{1} \circ \phi^{W}_{2}$ (observe that each 
$U \in \Ur$ is uniquely written as $U=\phi^{V}_{1}(W)$). 

The closed set $\Kr$ is defined as the complement of the union of sets from $\Ur$. Another way to describe $\Kr$ is 
$$
\Kr=\Kr_1 \cup \left( \bigcup_{V \in \Ur_1}   \phi^{V}_{1}(\Kr_2) \right).
$$
The pair $(\Kr,\Ur)$ is a generalized cell decomposition of $\D^3$

Next, we define the $G$-action $\mu:G \to \Homeo(\Kr)$  as follows.  By definition, $\Kr_1 \subset \Kr$. We set $\mu(g)=\mu_1(g)$ on $\Kr_1$. It remains to define $\mu(g)$ on $\Kr \cap V$ for each $V \in \Ur_1$. 
We let 
\begin{equation}\label{eq-rule}
\mu(g)=   \phi^{\mu_{1}(g)(V)}_{1}   \circ \mu_2(g)  \circ \big( \phi^{V}_{1} \big)^{-1}, \,\,\text{on}\,\, \Kr \cap \overline{V}.
\end{equation}
\vskip .3cm
We need to check that $\mu(g)$ is a homeomorphisms on $\Kr$. By definition, the restrictions of $\mu(g)$ on $\Kr_1$ and  $\Kr \cap V$ respectively are homeomorphisms. But we gave two definitions of $\mu(g)$ on $\pt{V}$. One one hand, we have 
$\mu(g)=\mu_1(g)$ since $\pt{V}\subset \Kr_1$, and on the other hand $\mu(g)$ is defined on $\pt{V}$ by (\ref{eq-rule}). We need to verify the equality
$$
\mu_1(g)= \phi^{\mu_{1}(g)(V)}_{1}   \circ \mu_2(g)  \circ \big( \phi^{V}_{1} \big)^{-1}, \,\,\text{on}\,\, \pt{V}.
$$
By  the assumption (\ref{eq-ass}) we have $\mu_2(g)=\mu_1(g)$ on $\Sp^2$. Replacing this in the previous equality yields  
$$
\mu_1(g)= \phi^{\mu_{1}(g)(V)}_{1}   \circ \mu_1(g)  \circ \big( \phi^{V}_{1} \big)^{-1}, \,\,\text{on}\,\, \pt{V}.
$$
This holds  by the property $(1)$ from the definition of a $G$-complex.

\vskip .3cm
\begin{remark} This is one of the main points in our construction of the $G$-complex $(\mu,\Kr,\Ur)$.  This explains  why it is natural to assume (\ref{eq-ass}). 
Moreover,  this is why we require in the definition of a $G$-complex that maps  $\phi^{U}:\Sp^2 \to \pt{U}$, $U \in \Ur$,  conjugate the action of 
$\mu(g)$ from $\Sp^2$ to its action between the boundaries of sets from $\Ur$. This is a strong requirement and it means that in general it will be difficult to extend a convergence action on $\Sp^2$ to a non-trivial convergence $G$-complex.
\end{remark}
\vskip .3cm

We show that $\mu:G \to \Homeo(\Kr)$ is a homomorphism. For $f,g \in G$, one verifies (recall the convention $\mu(fg)=\mu(g)\circ \mu(f)$):
\begin{align*}
\mu(fg) &= \phi^{\mu_{1}(fg)(V)}_{1}   \circ \mu_2(fg)  \circ \big( \phi^{V}_{1} \big)^{-1} \\
&=\phi^{\mu_{1}(g)(\mu_{1}(f)(V))}_{1}   \circ \mu_2(g) \circ  \mu_2(f)  \circ \big( \phi^{V}_{1} \big)^{-1} \\
&=\left( \phi^{\mu_{1}(g)(\mu_{1}(f)(V))}_{1}   \circ \mu_2(g)\circ  \big( \phi^{\mu_{1}(f)(V)}_{1} \big)^{-1} \right) \circ \left(\phi^{\mu_{1}(f)(V)}_{1} \circ  \mu_2(f)  \circ \big( \phi^{V}_{1} \big)^{-1} \right) \\
&= \mu(g)\circ \mu(f),
\end{align*}
thus $\mu$ is a $G$-action and therefore the marked $G$-complex $(\mu,\Kr,\Ur,\phi^{\Ur})$ is well defined. Clearly the $G$-complex $(\mu,\Kr,\Ur)$ is a refinement of $(\mu_1,\Kr_1,\Ur_1)$.

We say that $(\mu,\Kr,\Ur)$ is the refinement of  $(\mu_1,\Kr_1,\Ur_1)$ induced by $(\mu_2,\Kr_2,\Ur_2)$. The next proposition gives a sufficient condition for  the new $G$-complex $(\mu,\Kr,\Ur)$ to be  a convergence $G$-complex.

\begin{proposition}\label{prop-triv-0} Let  $(\mu_i,\Kr_i,\Ur_i)$, $i=1,2$, denote two  $G$-complexes and let $(\mu,\Kr,\Ur)$ be the refinement of  $(\mu_1,\Kr_1,\Ur_1)$ induced by $(\mu_2,\Kr_2,\Ur_2)$. Suppose that  $(\mu_1,\Kr_1,\Ur_1)$ is a convergence $G$-complex and that for every $U_1 \in \Ur_1$ the group  $\mu_2(\stab_{\mu_{1}}(U_1))$ is a convergence group on $\Kr_2$ (the latter condition is satisfied if  $(\mu_2,\Kr_2,\Ur_2)$ is a convergence $G$-complex). Then $(\mu,\Kr,\Ur)$ is a convergence $G$-complex.
\end{proposition}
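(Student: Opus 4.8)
The plan is to deduce this directly from Lemma \ref{lemma-glava}, applied with the coarse complex $(\mu',\Kr',\Ur')=(\mu_1,\Kr_1,\Ur_1)$. We already know from the construction that $(\mu,\Kr,\Ur)$ is a refinement of $(\mu_1,\Kr_1,\Ur_1)$, and the latter is a convergence $G$-complex by hypothesis. Hence the only thing left to verify is the hypothesis of Lemma \ref{lemma-glava}: that for every $U_1 \in \Ur_1$ the group $\mu(\stab_{\mu_1}(U_1))$ is a convergence group on $\overline{U_1}\cap\Kr$.

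First I would pin down the set $\overline{U_1}\cap\Kr$. Since $U_1$ is a component of $\D^3 \setminus \Kr_1$ we have $\overline{U_1}\cap\Kr_1=\pt{U_1}$, and from the description $\Kr=\Kr_1 \cup \bigcup_{V\in\Ur_1}\phi^{V}_1(\Kr_2)$ together with the fact that $\phi^{V}_1(\Kr_2\setminus\Sp^2)\subset V$ is disjoint from $\overline{U_1}$ for $V\ne U_1$, one checks that the only piece meeting $\overline{U_1}$ off its boundary is $\phi^{U_1}_1(\Kr_2)$. As $\Sp^2 \subset \Kr_2$ gives $\phi^{U_1}_1(\Sp^2)=\pt{U_1}\subset\phi^{U_1}_1(\Kr_2)$, we obtain the clean identification $\overline{U_1}\cap\Kr=\phi^{U_1}_1(\Kr_2)$, so that $\phi^{U_1}_1$ restricts to a homeomorphism $\Kr_2 \to \overline{U_1}\cap\Kr$.

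Next I would specialize the defining rule (\ref{eq-rule}) to elements $g\in\stab_{\mu_1}(U_1)$. For such $g$ we have $\mu_1(g)(U_1)=U_1$, so with $V=U_1$ the formula collapses to
\[
\mu(g)=\phi^{U_1}_1 \circ \mu_2(g) \circ \big(\phi^{U_1}_1\big)^{-1}, \quad \text{on } \overline{U_1}\cap\Kr.
\]
In other words, under the homeomorphism $\phi^{U_1}_1\colon \Kr_2 \to \overline{U_1}\cap\Kr$ the restricted action $g\mapsto \mu(g)|_{\overline{U_1}\cap\Kr}$ is topologically conjugate to $g\mapsto \mu_2(g)|_{\Kr_2}$ as $g$ ranges over $\stab_{\mu_1}(U_1)$. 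Since the convergence property of a group of homeomorphisms of a compact metric space is invariant under conjugation by a homeomorphism (the uniform continuity of $\phi^{U_1}_1$ and of its inverse transports the attracting/repelling dynamics), and since $\mu_2(\stab_{\mu_1}(U_1))$ is a convergence group on $\Kr_2$ by hypothesis, it follows that $\mu(\stab_{\mu_1}(U_1))$ is a convergence group on $\overline{U_1}\cap\Kr$.

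Having verified this for every $U_1\in\Ur_1$, Lemma \ref{lemma-glava} applies and yields that $(\mu,\Kr,\Ur)$ is a convergence $G$-complex. The parenthetical strengthening is immediate: if $(\mu_2,\Kr_2,\Ur_2)$ is itself a convergence $G$-complex then $\mu_2(G)$ is a convergence group on $\Kr_2$, and every subgroup of a convergence group --- in particular $\mu_2(\stab_{\mu_1}(U_1))$ --- is again one. I expect the only mildly delicate point to be the bookkeeping identity $\overline{U_1}\cap\Kr=\phi^{U_1}_1(\Kr_2)$; once the conjugacy hidden in (\ref{eq-rule}) is read off, the convergence conclusion is essentially formal.
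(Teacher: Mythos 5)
Your proposal is correct and follows essentially the same route as the paper: both deduce the result from Lemma \ref{lemma-glava} by observing that, for $g \in \stab_{\mu_1}(U_1)$, the rule (\ref{eq-rule}) exhibits $\mu(g)|_{\overline{U}_1\cap\Kr}$ as the conjugate of $\mu_2(g)|_{\Kr_2}$ by the marking $\phi^{U_1}_1$, so the convergence hypothesis transports. Your write-up merely makes explicit two points the paper leaves implicit, namely the identification $\overline{U}_1\cap\Kr=\phi^{U_1}_1(\Kr_2)$ and the invariance of the convergence property under topological conjugation.
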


\begin{proof} Fix $U_1 \in \Ur_1$. Then $\mu(\stab_{\mu_{1}}(U_1))$ acts on $\overline{U}_1 \cap \Kr$ as a convergence group because it is conjugate (by some marking  $\phi^{U_{1}}_{1}:\Sp^2 \to \pt{U}_1$)
to the action of the group $\mu_2(\stab_{\mu_{1}}(U_1))$ on $\Kr_2$,  and the latter is a convergence group by the assumption. Combining this with the assumption that  $(\mu_1,\Kr_1,\Ur_1)$ is a convergence $G$-complex and  Lemma \ref{lemma-glava} we conclude that $(\mu,\Kr,\Ur)$ is a convergence $G$-complex.
\end{proof}

The following proposition is elementary and its proof is left to the reader.

\begin{proposition}\label{prop-triv} Let $(\mu,\Kr,\Ur)$ be the refinement of  $(\mu_1,\Kr_1,\Ur_1)$ induced by $(\mu_2,\Kr_2,\Ur_2)$. Suppose that  $(\mu_1,\Kr_1,\Ur_1)$ is a free $G$-complex and that for every $U_1 \in \Ur_1$, the non-trivial elements in the group  $\mu_2(\stab_{\mu_{1}}(U_1))$ have no fixed points in  $\D^3 \cap \Kr_2$ (the latter condition is satisfied if $(\mu_2,\Kr_2,\Ur_2)$ is a free $G$-complex). Then $(\mu,\Kr,\Ur)$ a free $G$-complex.
\end{proposition}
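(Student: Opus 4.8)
The plan is to argue by contradiction, tracing any hypothetical fixed point through the two pieces out of which $\Kr$ is built. Recall that $\Kr=\Kr_1 \cup \bigcup_{V \in \Ur_1}\phi^{V}_{1}(\Kr_2)$, that $\mu(g)=\mu_1(g)$ on $\Kr_1$, and that on $\Kr \cap \overline{V}$ the action is given by (\ref{eq-rule}). So I would suppose $g \in G$ with $g \ne \id$ and $\mu(g)(x)=x$ for some $x \in \Kr \cap \D^3$, and aim to reach a contradiction, thereby showing $\mu(g)$ has no fixed point in $\Kr \cap \D^3$.

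First I would dispose of the easy case $x \in \Kr_1$: here $\mu(g)(x)=\mu_1(g)(x)=x$, so $\mu_1(g)$ fixes a point of $\Kr_1 \cap \D^3$, which is impossible since $(\mu_1,\Kr_1,\Ur_1)$ is free and $g \ne \id$. Hence $x \notin \Kr_1$, and since $\D^3 \setminus \Kr_1=\bigcup_{V \in \Ur_1}V$ is a disjoint union of the open cells, $x$ lies in a unique $V \in \Ur_1$. Because the boundaries $\pt{V}$ lie in $\Kr_1$, one checks that $\Kr \cap V=\phi^{V}_{1}(\Kr_2 \cap \D^3)$, so I may write $x=\phi^{V}_{1}(y)$ with $y \in \Kr_2 \cap \D^3$.

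Next I would substitute into (\ref{eq-rule}): since $x \in \Kr \cap \overline{V}$ and $\big(\phi^{V}_{1}\big)^{-1}(x)=y$,
$$
\mu(g)(x)=\phi^{\mu_{1}(g)(V)}_{1}\big(\mu_2(g)(y)\big).
$$
The fixed-point equation $\mu(g)(x)=x=\phi^{V}_{1}(y)$ then forces $\phi^{\mu_{1}(g)(V)}_{1}\big(\mu_2(g)(y)\big)=\phi^{V}_{1}(y)$. The right-hand side lies in the open cell $V$, while the left-hand side lies in $\mu_1(g)(V)$; as distinct cells of $\Ur_1$ are disjoint, equality can hold only if $\mu_1(g)(V)=V$, that is, $g \in \stab_{\mu_1}(V)$. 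With $\mu_1(g)(V)=V$ the markings coincide, $\phi^{\mu_{1}(g)(V)}_{1}=\phi^{V}_{1}$, and injectivity of $\phi^{V}_{1}$ yields $\mu_2(g)(y)=y$.

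At this point the hypothesis applies directly with $U_1=V$: because the $G$-action is effective, $\mu_2$ is a monomorphism, so $g \ne \id$ gives a non-trivial element $\mu_2(g)$ of $\mu_2(\stab_{\mu_1}(V))$, which by assumption has no fixed point in $\D^3 \cap \Kr_2$ — contradicting $\mu_2(g)(y)=y$ with $y \in \Kr_2 \cap \D^3$. This contradiction completes the argument. The only step requiring a little care is the cell-bookkeeping in the preceding paragraph that pins down $g \in \stab_{\mu_1}(V)$; everything else is formal. Finally, the parenthetical claim that freeness of $(\mu_2,\Kr_2,\Ur_2)$ implies the stated hypothesis is immediate, since $\stab_{\mu_1}(V)<G$ and freeness of $\mu_2$ means no non-trivial element of $G$ fixes a point of $\D^3 \cap \Kr_2$.
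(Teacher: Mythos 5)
Your proof is correct. The paper gives no proof of this proposition at all (it is explicitly ``left to the reader'' as elementary), and your argument — splitting a hypothetical fixed point into the case $x \in \Kr_1$ (killed by freeness of $\mu_1$) and the case $x \in \Kr \cap V$ for a unique cell $V \in \Ur_1$, then using disjointness of the cells to force $g \in \stab_{\mu_1}(V)$ and injectivity of $\phi^{V}_{1}$ to transport the fixed point to $\Kr_2 \cap \D^3$, where the hypothesis applies because $\mu_2$ is a monomorphism — is precisely the intended straightforward verification.
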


One may think of the $G$-complex $(\mu,\Kr,\Ur)$ as the intersection between  $(\mu_1,\Kr_1,\Ur_1)$ and $(\mu_2,\Kr_2,\Ur_2)$. This is illustrated by the following proposition which is the main motivation behind defining the new $G$-complex $(\mu,\Kr,\Ur)$
from the old ones $(\mu_1,\Kr_1,\Ur_1)$ and $(\mu_2,\Kr_2,\Ur_2)$.

\begin{proposition}\label{prop-inter} For every $U \in \Ur$ there are $U_i \in \Ur_i$ such that
\begin{equation}\label{eq-2}
\stab_{\mu}(U)=\stab_{\mu_{1}}(U_1) \cap \stab_{\mu_{2}}(U_2) 
\end{equation}

\end{proposition}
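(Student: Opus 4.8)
The plan is to compute $\stab_\mu(U)$ directly from the defining rule (\ref{eq-rule}) for the induced action $\mu$, using the fact that each $U\in\Ur$ admits a \emph{unique} representation $U=\phi^{V}_{1}(W)$ with $V\in\Ur_1$ and $W\in\Ur_2$. This uniqueness is clear: the members of $\Ur_1$ are the distinct connected components of $\D^3\setminus\Kr_1$, hence pairwise disjoint, and $U=\phi^{V}_{1}(W)\subset\phi^{V}_{1}(\D^3)=V\subset\D^3\setminus\Kr_1$ is connected, so $V$ is the unique member of $\Ur_1$ containing $U$ and then $W=\big(\phi^{V}_{1}\big)^{-1}(U)$ is determined. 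Given $U\in\Ur$, I would therefore set $U_1=V$ and $U_2=W$ and prove that $\stab_\mu(U)=\stab_{\mu_1}(V)\cap\stab_{\mu_2}(W)$.

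First I would determine how $\mu(g)$ moves the set $U$. Since $W\in\Ur_2$ is a component of $\D^3\setminus\Kr_2$ we have $\pt{W}\subset\Kr_2$, whence $\pt{U}=\phi^{V}_{1}(\pt{W})\subset\phi^{V}_{1}(\Kr_2)\subset\Kr\cap\overline{V}$, so the rule (\ref{eq-rule}) applies on all of $\pt U$. Applying it and using $\big(\phi^{V}_{1}\big)^{-1}(\pt U)=\pt W$ gives
$$
\mu(g)(\pt U)=\phi^{\mu_1(g)(V)}_{1}\big(\mu_2(g)(\pt W)\big).
$$
Because $\mu_2$ is a $G$-action on the generalized cell complex $(\Kr_2,\Ur_2)$, we have $\mu_2(g)(\pt W)=\pt\big(\mu_2(g)(W)\big)$, and therefore
$$
\mu(g)(\pt U)=\pt\Big(\phi^{\mu_1(g)(V)}_{1}\big(\mu_2(g)(W)\big)\Big),
$$
i.e. the permutation of $\Ur$ induced by $g$ carries $U=\phi^{V}_{1}(W)$ to $\phi^{\mu_1(g)(V)}_{1}\big(\mu_2(g)(W)\big)$.

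Finally I would read off the stabilizer. By definition $g\in\stab_\mu(U)$ iff $\mu(g)(\pt U)=\pt U$, which, by the previous display and the fact that distinct members of $\Ur$ have distinct boundaries, amounts to $\phi^{\mu_1(g)(V)}_{1}\big(\mu_2(g)(W)\big)=\phi^{V}_{1}(W)$. Invoking uniqueness of the representation once more, this equality holds precisely when $\mu_1(g)(V)=V$ and $\mu_2(g)(W)=W$, that is, when $g\in\stab_{\mu_1}(V)$ and $g\in\stab_{\mu_2}(W)$. This gives (\ref{eq-2}) with $U_1=V$ and $U_2=W$.

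The whole argument is a bookkeeping computation, so I do not expect a serious obstacle; the only point requiring care is keeping the two markings straight and confirming at each stage that it is precisely the uniqueness of the representation $U=\phi^{V}_{1}(W)$ that converts an equality of subsets of $\overline{\D}^3$ into the two separate conditions on $V$ and on $W$.
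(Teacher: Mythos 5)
Your proposal is correct and takes essentially the same approach as the paper: the paper's proof simply fixes the (unique) representation $U=\phi^{U_1}_{1}(U_2)$ with $U_i\in\Ur_i$ and asserts that (\ref{eq-2}) then ``holds by definition.'' What you have written is precisely the unwinding of that definition --- applying the rule (\ref{eq-rule}) on $\pt{U}$, identifying the induced permutation of $\Ur$, and using uniqueness of the representation together with the fact that distinct members of $\Ur$ have distinct boundaries --- so it supplies the bookkeeping the paper leaves implicit.
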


\begin{proof} Let $U_i \in \Ur_i$ be such that
$$
U=\phi^{U_{1}}_{1}(U_2).
$$
Then (\ref{eq-2}) holds by definition.
\end{proof}

We end this subsection by showing that if we have a $G$-complex $(\mu,\Kr,\Ur)$ such that $\stab_{\mu}(U)$ is the trivial group for every $U \in \Ur$, then we can extend the action  $\mu:G \to \Homeo \Kr$ to a convergence action of $G$ on $\overline{\D}^3$.

\begin{proposition}\label{prop-vazno} Let $(\mu,\Kr,\Ur)$ be a free convergence $G$-complex such that $\stab_{\mu}(U)$ is the trivial group for every $U \in \Ur$. Let  $\Kr_0=\overline{\D}^3$ and let $\Ur_0$ denote an empty collection. 
Then  there exists a free and convergence $G$-complex $(\mu_0,\Kr_0,\Ur_0)$ that is a refinement of the $G$-complex  $(\mu,\Kr,\Ur)$. (In other words, under these assumptions the action of the convergence group $\mu(G)$ acting on $\Sp^2$ can be extended to a free convergence action on $\D^3$.)
\end{proposition}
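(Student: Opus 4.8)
The plan is to realize $(\mu_0,\Kr_0,\Ur_0)$ as an instance of the intersection construction of the previous subsection, taking the second complex to be a completely filled-in ball. First I would extend the boundary action to the whole of $\overline{\D}^3$ in the crudest possible way. Write $\sigma_g=\mu(g)|_{\Sp^2}$ and let $\nu(g)\colon\overline{\D}^3\to\overline{\D}^3$ be its radial cone extension, $\nu(g)(r\xi)=r\,\sigma_g(\xi)$ for $\xi\in\Sp^2$ and $r\in[0,1]$. Radial coning commutes with composition, $\widehat{\sigma_g\circ\sigma_f}=\widehat{\sigma_g}\circ\widehat{\sigma_f}$, so $\nu(fg)=\nu(g)\circ\nu(f)$; moreover $\nu$ is a monomorphism because the boundary restriction $g\mapsto\sigma_g$ is faithful (as $G$ is torsion-free and acts as a convergence group on $\Sp^2\subset\Kr$, no nontrivial element can fix $\Sp^2$ pointwise). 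Each $\nu(g)$ is a homeomorphism fixing $\Sp^2$ setwise and agreeing there with $\mu(g)$, and since the associated collection of complementary regions is empty the marking condition is vacuous. Hence $(\nu,\overline{\D}^3,\emptyset)$ is a $G$-complex, and it satisfies the compatibility hypothesis (\ref{eq-ass}) with $(\mu,\Kr,\Ur)$.

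Second, I would form the refinement $(\mu_0,\Kr_0,\Ur_0)$ of $(\mu,\Kr,\Ur)$ induced by $(\nu,\overline{\D}^3,\emptyset)$. Because the second complex has no complementary regions, the defining formulas give $\Ur_0=\emptyset$ and $\Kr_0=\Kr\cup\bigcup_{U\in\Ur}\phi^U(\overline{\D}^3)=\Kr\cup\bigcup_{U\in\Ur}\overline{U}=\overline{\D}^3$, so $(\mu_0,\Kr_0,\Ur_0)$ is exactly the filled-in ball, with $\mu_0=\mu$ on $\Kr$. That $\mu_0$ is a genuine $G$-action by homeomorphisms, and in particular that each $\mu_0(g)$ is globally continuous across the infinitely many (but uniformly shrinking) complementary balls, is already guaranteed by the general construction of the induced refinement, so I would cite it rather than re-verify it; this is precisely where the last axiom in the definition of a generalized cell decomposition is used.

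Third, the two desired properties follow immediately from the trivial-stabilizer hypothesis. For freeness I would apply Proposition \ref{prop-triv}: $(\mu,\Kr,\Ur)$ is free by assumption, and for every $U\in\Ur$ the group $\nu(\stab_{\mu}(U))=\nu(\{\id\})=\{\id\}$ has no nontrivial elements, so its fixed-point condition on $\D^3$ holds vacuously; hence $(\mu_0,\Kr_0,\Ur_0)$ is free. For the convergence property I would apply Proposition \ref{prop-triv-0}: $(\mu,\Kr,\Ur)$ is a convergence $G$-complex by assumption, and again $\nu(\stab_{\mu}(U))=\{\id\}$ is trivially a convergence group on $\overline{\D}^3$ for every $U\in\Ur$; hence $(\mu_0,\Kr_0,\Ur_0)$ is a convergence $G$-complex. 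The reason for routing the argument through the stabilizers is essential: the cone extension $\nu$ is itself neither free nor a convergence action, since it fixes the origin, so one cannot feed $(\nu,\overline{\D}^3,\emptyset)$ into the propositions as a convergence complex directly. It is only the triviality of each $\stab_{\mu}(U)$ that allows Propositions \ref{prop-triv-0} and \ref{prop-triv} to be invoked with the trivial subgroup, where every hypothesis is automatic.

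The only place any genuine content enters is the global continuity of the piecewise-defined $\mu_0(g)$ at points of $\Kr$ at which infinitely many complementary balls accumulate; this is controlled by the fact that, for each fixed $g$, only finitely many components can have diameter exceeding a given $\delta$, so a shrinking sequence of distinct components and their images under $\mu_0(g)$ both have diameters tending to $0$, forcing $\mu_0(g)$ to agree with $\mu(g)$ in the limit. Since this verification is subsumed in the already-established induced-refinement construction, I expect the remaining argument to be entirely formal, and the proof to reduce to the three-line citation of Propositions \ref{prop-triv-0} and \ref{prop-triv} together with the observation that all stabilizers are trivial.
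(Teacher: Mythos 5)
Your proposal is correct and follows essentially the same route as the paper: the radial extension $\mu_{\text{rad}}$ of the boundary action gives a (neither free nor convergence) $G$-complex $(\mu_{\text{rad}},\Kr_0,\Ur_0)$, one forms the refinement of $(\mu,\Kr,\Ur)$ induced by it, and then freeness and the convergence property follow from Propositions \ref{prop-triv} and \ref{prop-triv-0} precisely because every $\stab_{\mu}(U)$ is trivial. Your additional remarks (why the cone extension cannot be fed in directly as a free or convergence complex, and why global continuity across accumulating components is handled by the general refinement construction) are accurate elaborations of points the paper leaves implicit.
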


\begin{proof} We construct the $G$-complex $(\mu_0,\Kr_0,\Ur_0)$ as the refinement of $(\mu,\Kr,\Ur)$ by the $G$-complex $(\mu_{\text{rad}},\Kr_0,\Ur_0)$ which we define as follows. 

Let $\mu_{\text{rad}}(g):\overline{\D}^{3} \to \overline{\D}^3$ be the radial extension of the homeomorphism $\mu(g):\Sp^2 \to \Sp^2$. The radial extension defines a monomorphism from \newline
$\Homeo(\Sp^2) \to \Homeo(\overline{\D}^3)$ and thus $(\mu_{\text{rad} },\Kr_0,\Ur_0)$ is a $G$-complex (note that this $G$-complex is neither free nor convergence). Since $\stab_{\mu}(U)$ is trivial for every $U \in \Ur$ by Proposition \ref{prop-inter}, 
it follows from Proposition \ref{prop-triv} the $G$-complex $(\mu_0,\Kr_0,\Ur_0)$ is a free $G$-complex  and by Proposition \ref{prop-triv-0}  that it is a convergence $G$-complex.

\end{proof}

\subsection{$G$-complex arising from malnormal surface subgroups of $G$} From now on, we assume that $G$ is a torsion-free hyperbolic group whose boundary $\pt{G}$ is homeomorphic to $\Sp^2$, and such that elements of $G$ act as orientation preserving homeomorphisms on  $\pt{G}\approx \Sp^2$. We fix a homeomorphism $\Psi_0:\pt{G} \to \Sp^2$ and thus obtain a convergence $G$-action $\mu_0:G \to \Homeo(\Sp^2)$ by $\mu_0(g)=\Psi_0 \circ g \circ \Psi^{-1}_{0}$.

\begin{remark} Note that  $(\mu_0,\Sp^2,\D^3)$ is a convergence $G$-complex (the corresponding collection $\Ur$ contains only one element $\D^3$).
\end{remark}

In what follows we assume that $H_0<G$ is a quasi-convex,  malnormal surface subgroup. By $\mathcal{H}$ we denote the collection of $G$-conjugates of $H_0$.  
For each $H \in \mathcal{H}$, by $\gamma_H \subset \Sp^2$ we denote the limit set of $H$ (the set $\gamma_H$ is a Jordan curve because $H$ is a quasi-convex subgroup). 
The  curves $\gamma_H$ are disjoint for different $H$ because $H_0<G$ is malnormal.

The surface group $\mu_0(H)$ is acting on the Jordan curve $\gamma_H$ as a convergence group. It follows from Theorem \ref{thm-GCJ} of Gabai and Casson-Jungreis that the action of $\mu_0(H)$ on $\gamma_H$ 
is conjugated to a Fuchsian surface group. This implies that every element of  $\mu_0(H)$ preserves a chosen orientation on $\gamma_H$.  
Thus we can define the  orientation on each $\gamma_H$ such that $\mu_0(g)(\gamma_H)=\gamma_{g^{-1}Hg}$  as oriented curves for every $g \in G$.

The set $\Sp^2 \setminus \gamma_H$   has two components (both Jordan domains). The one to the right of $\gamma_H$ we denote  by $D_H$ and the one to the left by $L_H$ (since $\mu_0(g)$, $g \in H$, preserves an orientation on $\gamma_H$ it follows that $\mu_0(g)(D_H)=D_H$ and similarly for $L_H$).   The following proposition will be used below in the proof of Lemma \ref{lemma-imp}.

\begin{proposition}\label{prop-nada} There exists a collection of homeomorphisms $\xi_H:\overline{D}_H \to \overline{L}_H$, $H \in \mathcal{H}$, with the following properties:
\begin{enumerate}
\item $\xi_H=\id$ on $\pt{D}_H=\pt{L}_H=\gamma_H$.
\item The equality
\begin{equation}\label{eq-0-l}
\big( \xi_{g^{-1}Hg} \circ \mu_0(g) \big)(x)=\big( \mu_0(g) \circ \xi_H \big)(x), 
\end{equation}
holds  for each $g \in G$,  $x \in D_H$, and $H \in \mathcal{H}$.
\end{enumerate}

\end{proposition}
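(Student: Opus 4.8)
The plan is to reduce the entire equivariant family $\{\xi_H\}_{H\in\mathcal{H}}$ to a single homeomorphism attached to $H_0$, and then to build that one map directly from the two surface quotients cut out by $\gamma_{H_0}$, using the convergence dynamics of $\mu_0$.

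\emph{Reduction to $H_0$.} First I would record that, since $H_0$ is malnormal it is maximal (quasi-convex, almost malnormal, infinite), so $\stab_G\Lambda(H_0)=H_0$; as a normalising element preserves the limit set, this gives $N_G(H_0)=H_0$. Suppose we have produced an \emph{$H_0$-equivariant} homeomorphism $\xi_{H_0}\colon\overline{D}_{H_0}\to\overline{L}_{H_0}$ that is the identity on $\gamma_{H_0}$, where $H_0$-equivariance means $\mu_0(h)\circ\xi_{H_0}=\xi_{H_0}\circ\mu_0(h)$ for all $h\in H_0$. For arbitrary $H\in\mathcal{H}$ write $H=k^{-1}H_0k$ and set
\[
\xi_H:=\mu_0(k)\circ\xi_{H_0}\circ\mu_0(k)^{-1}
\]
(so $\xi_{H_0}$ corresponds to $k=\id$). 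Since $\mu_0(g)$ preserves orientation and $\mu_0(g)(\gamma_H)=\gamma_{g^{-1}Hg}$ as oriented curves, $\mu_0(g)$ carries $D_H$ onto $D_{g^{-1}Hg}$ and $L_H$ onto $L_{g^{-1}Hg}$; hence $\xi_H$ really maps $\overline{D}_H$ onto $\overline{L}_H$ and is the identity on $\gamma_H$. Independence of the choice of $k$ follows from $N_G(H_0)=H_0$ together with the $H_0$-equivariance of $\xi_{H_0}$: two choices differ by $h=k_1k_2^{-1}\in H_0$, and, using the convention $\mu_0(fg)=\mu_0(g)\circ\mu_0(f)$, the extra factor $\mu_0(h)$ is absorbed precisely because $\mu_0(h)\,\xi_{H_0}\,\mu_0(h)^{-1}=\xi_{H_0}$. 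The same convention yields (\ref{eq-0-l}): writing $g^{-1}Hg=(kg)^{-1}H_0(kg)$ and $\mu_0(kg)=\mu_0(g)\circ\mu_0(k)$ gives $\xi_{g^{-1}Hg}=\mu_0(g)\circ\xi_H\circ\mu_0(g)^{-1}$, which is exactly the asserted identity.

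\emph{The two disks are closed surfaces.} It remains to build $\xi_{H_0}$. The group $\mu_0(H_0)$ acts on $\Sp^2$ as a convergence group with limit set $\gamma_{H_0}$, so it acts properly discontinuously on the ordinary set $\Sp^2\setminus\gamma_{H_0}=D_{H_0}\sqcup L_{H_0}$. Because $G$ is torsion-free, every nontrivial $\mu_0(h)$ has its fixed points inside $\Lambda(\langle h\rangle)\subset\gamma_{H_0}$, so the action on each open disk is also free. Thus $S_+:=D_{H_0}/H_0$ and $S_-:=L_{H_0}/H_0$ are aspherical surfaces without boundary carrying a marking $\pi_1\cong H_0$. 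They are in fact \emph{closed}: a non-compact surface is homotopy equivalent to a graph and hence has free fundamental group, whereas $H_0$ is a non-free surface group; so the actions are automatically cocompact and $S_\pm\cong S_g$, where $H_0\cong\pi_1(S_g)$. Both are orientable, as $\mu_0$ preserves the orientation of $\Sp^2$.

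\emph{Lifting and extending by the identity.} Since $S_\pm$ are aspherical, the identity isomorphism $\pi_1(S_+)\cong H_0\cong\pi_1(S_-)$ is induced by a homotopy equivalence, and by the Dehn--Nielsen--Baer theorem it is induced by a homeomorphism $f\colon S_+\to S_-$; correcting a lift of $f$ by a deck transformation to kill the ambiguity coming from inner automorphisms, I obtain an $H_0$-equivariant homeomorphism $\tilde f\colon D_{H_0}\to L_{H_0}$ of the open disks. I would then extend $\tilde f$ across $\gamma_{H_0}$ by the identity and verify continuity from the dynamics alone. Fix a compact set $K\subset D_{H_0}$ with $\mu_0(H_0)K=D_{H_0}$. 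If $x_k\to\xi\in\gamma_{H_0}$, write $x_k=\mu_0(h_k)y_k$ with $y_k\in K$; the $h_k$ must leave every finite subset of $H_0$ (otherwise the $x_k$ stay in a compact subset of the open disk), so after passing to a subsequence $\mu_0(h_k)\to a$ uniformly on compact subsets of $\Sp^2\setminus\{b\}$ with $a,b\in\gamma_{H_0}$. As $K$ and $\tilde f(K)$ are compact and avoid $b$, both $x_k=\mu_0(h_k)y_k$ and $\tilde f(x_k)=\mu_0(h_k)\tilde f(y_k)$ converge to the \emph{same} point $a=\xi$, so $\tilde f$ extends continuously by $\xi\mapsto\xi$; the same argument for $\tilde f^{-1}$ shows the extension is a homeomorphism $\overline{D}_{H_0}\to\overline{L}_{H_0}$, equal to the identity on $\gamma_{H_0}$ and therefore $H_0$-equivariant on all of $\overline{D}_{H_0}$. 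Setting $\xi_{H_0}=\tilde f$ finishes the construction. I expect the real difficulty to sit in this last step: one must reconcile the \emph{intrinsic} surface-cover description of $D_{H_0}$ and $L_{H_0}$ with the \emph{extrinsic} topology they inherit as subsets of $\Sp^2$, and it is precisely the convergence property of $\mu_0$ (together with Theorem \ref{thm-GCJ}) that forces the boundary extension to be both continuous and equal to the identity.
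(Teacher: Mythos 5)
Your proof is correct, and its first half coincides with the paper's: the paper also reduces everything to a single $H_0$-equivariant homeomorphism $\xi_{H_0}$ with $\xi_{H_0}=\id$ on $\gamma_{H_0}$, defines $\xi_H=\mu_0(g)\circ\xi_{H_0}\circ\mu_0^{-1}(g)$ for $H=g^{-1}H_0g$, and checks well-definedness exactly as you do, by observing that two choices of conjugating element differ by an element normalizing $H_0$, hence lying in $H_0$ by malnormality, so the ambiguity is absorbed by $H_0$-equivariance; equation (\ref{eq-0-l}) then follows formally from the composition convention. Where you genuinely diverge is the construction of $\xi_{H_0}$ itself. The paper invokes the theorem (citing \cite{tukia}, \cite{zei}) that a convergence group acting on a Jordan domain is topologically conjugate to a Fuchsian group acting on $\overline{\D}^2$: it takes conjugating homeomorphisms $\eta_D\colon\overline{D}_{H_0}\to\overline{\D}^2$ and $\eta_L\colon\overline{L}_{H_0}\to\overline{\D}^2$, extends the circle map $\eta_L\circ\eta_D^{-1}$ (which conjugates one Fuchsian group to the other) equivariantly to a homeomorphism $\eta$ of $\overline{\D}^2$, and sets $\xi_{H_0}=\eta_L^{-1}\circ\eta\circ\eta_D$, so that the boundary identity and equivariance are immediate from the construction. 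You instead work with classical surface topology: you form the quotient surfaces $D_{H_0}/H_0$ and $L_{H_0}/H_0$, argue they are closed (open surfaces have free fundamental group, while $H_0$ is not free), realize the identity outer isomorphism by a homeomorphism via asphericity and Dehn--Nielsen--Baer, correct a lift to get an $H_0$-equivariant homeomorphism of the open disks, and then prove by hand, from cocompactness and the convergence property of $\mu_0(H_0)$, that this lift extends continuously by the identity across $\gamma_{H_0}$. The paper's route is shorter given the cited conjugation theorem, and it hands you the closed-disc statement in one stroke; your route trades that black box for Dehn--Nielsen--Baer plus an explicit boundary-continuity argument, which is more self-contained and isolates precisely where the convergence dynamics is used. (Two small points to tighten: the subsequence argument should be closed off by the standard remark that every subsequence of $\tilde f(x_k)$ has a further subsequence converging to $\xi$, hence the full sequence converges; and the continuity of the inverse of the extension is automatic since the domain is compact and the target Hausdorff.) Both arguments rest on the same dynamical facts, so this is a legitimate alternative proof rather than a variant of the paper's.
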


\begin{proof} The surface group $\mu_0(H_0)$ acts on both $D_{H_{0}}$ and $L_{H_{0}}$ as a convergence group. It is well known (see \cite{tukia}, \cite{zei}) that a convergence group acting on a Jordan domain is topologically conjugated to a Fuchsian group acting on the unit disc $\D^2$. That is, there are homeomorphisms $\eta_D: \overline{D_{H_{0}}} \to \overline{\D}^2$ and $\eta_L: \overline{L_{H_{0}}} \to \overline{\D}^2$ such that $\eta_D\mu(H_0)\eta^{-1}_D$ and $\eta_L\mu(H_0)\eta^{-1}_L$ 
are Fuchsian groups. 

The circle homeomorphism $\eta_L \circ \eta^{-1}_D:\Sp^1 \to \Sp^1$ conjugates the group $\eta_D\mu(H_0)\eta^{-1}_D$ to $\eta_L\mu(H_0)\eta^{-1}_L$. Let $\eta:\overline{\D}^2 \to \overline{\D}^2$ be an equivariant extension of  
$\eta_L \circ \eta^{-1}_D$. 
Set 
$$
\xi_{H_{0}}= \eta^{-1}_L \circ \eta  \circ \eta_D.
$$
Then $\xi_{H_{0}}:\overline{D}_{H_{0}} \to \overline{L}_{H_{0}}$, is such that $\xi_{H_{0}}=\id$ on $\gamma_{H_{0}}$ and 
\begin{equation}\label{eq-1-l}
\xi_{H_{0}}^{-1}\circ \mu_0(h) \circ \xi_{H_{0}}=\mu_0(h),\,\, \text{for}\,\, h \in H_0.
\end{equation}
\vskip .3cm
For $g \in G$ and $H=g^{-1}H_0 g$,  we let 
\begin{equation}\label{eq-rev-a}
\xi_{H}=\mu_0(g) \circ \xi_{H_{0}}  \circ \mu^{-1}_{0}(g), \,\, \text{on}\,\, D_H.
\end{equation}
We need to check that this definition is independent of the choice of $g$ for which  $H=g^{-1}H_0 g$.
Let $f \in G$ such that $H=f^{-1}H_0 f$. We need to check the equality 
$$
\mu_0(f) \circ \xi_{H_{0}}  \circ \mu^{-1}_{0}(f)=\mu_0(g) \circ \xi_{H_{0}}  \circ \mu^{-1}_{0}(g).
$$
This equality can be rewritten as 
$$
\mu^{-1}_{0}(g) \circ \mu_0(f) \circ \xi_{H_{0}}  \circ \mu^{-1}_{0}(f)\circ \mu_0(g)=\xi_{H_{0}},
$$
and it becomes 
$$
\mu_0(fg^{-1}) \circ \xi_{H_{0}}  \circ \mu^{-1}_{0}(fg^{-1})=\xi_{H_{0}}.
$$
Let $h=fg^{-1}$. Then $hH_0 h^{-1}=H_0$ and it follows that $h \in H_0$ since $H_0$ is malnormal. Then the last equality is equivalent to (\ref{eq-1-l}). 

The equality (\ref{eq-0-l}) is straightforward to verify  and we leave  this to the reader.

\end{proof}

To each quasi-convex, malnormal surface subgroup $H_0<G$ we  associate a generalized cell decomposition as follows.

\begin{proposition}\label{prop-framing} Let $H_0<G$ denote a quasi-convex, malnormal surface subgroup and let $\mathcal{H}$ denote the collection of all $G$-conjugates of $H_0$.
There exists a collection of embedded discs $B_H \subset \D^3$, $H \in \mathcal{H}$,  with the following properties:
\begin{enumerate}
\item The boundary $\pt{B}_H$ equals $\gamma_H$.
\item The closed  discs $\overline{B}_H \subset \overline{\D}^3$ are mutually disjoint for different $H \in \mathcal{H}$.
\item If $\diam(\gamma_H) \to 0$ (along some sequence $H \in \mathcal{H}$) then $\diam(B_H) \to 0$ also (here $\diam$ refers to the standard metric on $\R^3$).
\item Let 
$$
\Kr_{\mathcal{H}}=\Kr=\Sp^2 \cup \big(\bigcup_{H \in \mathcal{H}} B_H  \big),
$$
and $\Ur_{\mathcal{H}}=\Ur$ the collection of connected components of the complement $\overline{\D}^3 \setminus \Kr$. Then $\Kr$ is a closed set and each  pair $(\overline{U},\pt{U})$, $U \in \Ur$,  is homeomorphic to $(\overline{\D}^3,\Sp^2)$.
\item Let $U \in \Ur$ and $a,b \in \pt{U}\cap \Sp^2$. Then no Jordan curve $\gamma_H$, $H \in \mathcal{H}$, separates the points $a$ and $b$.
\end{enumerate}
\end{proposition}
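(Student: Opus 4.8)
The plan is to build the discs $B_H$ one at a time, in order of decreasing diameter of their boundary curves, each time choosing the new disc small and disjoint from the finitely many previously placed discs of comparable size; the quantitative input that makes this possible is Lemma \ref{lemma-GMRS}.

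First I would record that $\{\gamma_H\}_{H\in\mathcal{H}}$ is a \emph{null family}. Since every $H\in\mathcal{H}$ is a conjugate of the quasi-convex group $H_0$ and $\gamma_H=\Lambda(H)$, Lemma \ref{lemma-GMRS} gives, for each $\delta>0$, at most $R(\delta)$ curves with $\diam(\gamma_H)>\delta$. Hence $\mathcal{H}$ is countable and may be enumerated as $H_1,H_2,\dots$ with $\diam(\gamma_{H_n})$ non-increasing and tending to $0$. When $\diam(\gamma_H)$ is small, one of the two complementary Jordan domains is contained in a cap of diameter $\le 2\diam(\gamma_H)$; write $S_H$ for this ``small side'' (for the finitely many curves above a fixed threshold the choice of side is irrelevant to the asymptotics). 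Each $\gamma_H$ is tame in $\Sp^2$ by the two-dimensional Schoenflies theorem, so it is bicollared, and I will always take $B_H$ to be a tamely embedded ``bowl'' spanning $\gamma_H$ on the $S_H$-side, meeting $\Sp^2$ transversally along $\gamma_H$.

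The inductive step is the heart of the argument. Suppose $B_{H_1},\dots,B_{H_{n-1}}$ have been built: tame, mutually disjoint, with $\overline{B}_{H_i}\cap\Sp^2=\gamma_{H_i}$ and $\diam(B_{H_i})\le C\diam(\gamma_{H_i})$. Since the curves are disjoint, each earlier $\gamma_{H_i}$ lies either entirely inside $\overline{S}_{H_n}$ or entirely outside it. If $\gamma_{H_i}\subset S_{H_n}$, then $\diam(\gamma_{H_n})\le\diam(\gamma_{H_i})\le\diam(S_{H_n})\le 2\diam(\gamma_{H_n})$, so these curves have diameter comparable to $\diam(\gamma_{H_n})$ and, by Lemma \ref{lemma-GMRS}, there are only boundedly many of them; if $\gamma_{H_i}$ lies outside $\overline{S}_{H_n}$, then $B_{H_i}$ touches $\Sp^2$ only along $\gamma_{H_i}$ and so is bounded away from $\Sp^2$ over the footprint $\overline{S}_{H_n}$ by a positive clearance. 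I then take $B_{H_n}$ to be a bowl over $\overline{S}_{H_n}$ that stays shallower than this clearance near its rim while dipping into $\D^3$ just deep enough in its interior to pass beneath the bounded collection of comparable-size discs nested under it; this can be arranged within radial depth $O(\diam(\gamma_{H_n}))$. The resulting $B_{H_n}$ is disjoint from all earlier discs and again satisfies $\diam(B_{H_n})\le C\diam(\gamma_{H_n})$, giving (1), (2) and the size bound yielding (3): if $\diam(\gamma_H)\to 0$ then $\diam(B_H)\le C\diam(\gamma_H)\to 0$.

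Finally I would verify the global statements. Closedness of $\Kr=\Sp^2\cup\bigcup_H B_H$ follows from (3): a limit point of $\bigcup_H B_H$ not lying on any one of the $B_H$ is approached by points on discs of diameter tending to $0$, each within its own diameter of $\Sp^2$, hence lies on $\Sp^2$. That every component $U\in\Ur$ is an open $3$-ball with $(\overline{U},\pt{U})\approx(\overline{\D}^3,\Sp^2)$ I would deduce from tameness of the bowls: each $\pt{U}$ is a bicollared $2$-sphere assembled from a spherical face and finitely many bowls meeting transversally along the tame curves $\gamma_H$, so by the topological Schoenflies theorem (Brown, Mazur) it bounds a ball. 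Property (5) is then built in: since no $B_H$ crosses the interior of $U$, the spherical face $\pt{U}\cap\Sp^2$ lies in a single complementary component of the disjoint family $\{\gamma_H\}$ in $\Sp^2$, and such a component is by definition not separated by any $\gamma_H$. The main obstacle is precisely the inductive disjoint placement with simultaneous diameter control: a priori a larger disc can sit directly beneath a smaller curve, and it is only the comparability of sizes forced by the cap estimate, together with the scale-by-scale finiteness from Lemma \ref{lemma-GMRS}, that keeps the required dipping depth, and hence $\diam(B_{H_n})$, under control.
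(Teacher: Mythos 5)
Your induction cannot close, and the failure is exactly at the point you flag as ``the main obstacle.'' Suppose at stage $n$ some previously placed curves $\gamma_{H_i}$ (necessarily with $\diam(\gamma_{H_i})\ge\diam(\gamma_{H_n})$) lie inside the small side $S_{H_n}$; as you note, their diameters can be as large as $2\diam(\gamma_{H_n})$, so by the inductive hypothesis their discs can have diameter up to $2C\diam(\gamma_{H_n})$. Any disc spanning $\gamma_{H_n}$ and disjoint from these discs must, together with $\overline{S}_{H_n}$, bound a region of $\overline{\D}^3$ containing all of them, so $\diam(B_{H_n})\ge 2C\diam(\gamma_{H_n})-2\diam(S_{H_n})\ge(2C-4)\diam(\gamma_{H_n})$, which exceeds $C\diam(\gamma_{H_n})$ once $C>4$. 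Thus the uniform constant $C$ cannot be preserved: it at least roughly doubles across each level of nesting of comparable-size curves, and this is forced by topology, not an artifact of your ``bowl'' construction. Whether property (3) survives then depends on whether chains of nested curves of pairwise comparable diameter have \emph{uniformly} bounded length, independent of the scale. Lemma \ref{lemma-GMRS} does not give this: it bounds the number of curves of diameter $>\delta$ by a constant $R(\delta)$ that blows up as $\delta\to 0$, so at scale $d$ it only bounds such chains by $R(d/2)$, and $2^{R(d/2)}d$ need not tend to $0$. A scale-invariant local finiteness (``bounded packing'') statement for the conjugates of $H_0$ is genuinely stronger than Lemma \ref{lemma-GMRS}, and you neither state nor prove it; so properties (2) and (3) are not simultaneously established. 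A secondary issue: the boundary $\pt{U}$ of a complementary component typically contains \emph{infinitely} many discs $B_H$ (the curves $\gamma_H$ are dense in $\Sp^2$), not ``finitely many bowls,'' so your Schoenflies argument for (4) is missing a verification of bicollaredness at accumulation points.

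The paper dissolves both difficulties at once by a different route: it first proves (Theorem \ref{thm-whyburn}, a strengthening of Whyburn's theorem on Sierpinski curves, established in the Appendix) that the null family $\{\gamma_H\}$ can be \emph{simultaneously straightened}, i.e.\ there is a homeomorphism of $\Sp^2$ carrying every $\gamma_H$ to a round circle; it then takes $B_H$ to be the totally geodesic hyperbolic disc in $\D^3$ spanned by the round circle $\gamma_H$. Disjoint round circles bound disjoint geodesic discs no matter how deeply nested (nested circles give nested half-spaces), $\diam(B_H)=\diam(\gamma_H)$ exactly, each complementary component $U$ is hyperbolically convex --- hence, in the Klein model, a convex body, which gives (4) without any Schoenflies argument --- and (5) follows from convexity by running the geodesic between $a$ and $b$ inside $U$. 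Your nesting obstruction never appears because geodesic discs over nested circles nest inside one another at no extra depth; this is precisely the geometric input your purely topological induction lacks.
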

\begin{proof}  We first prove that there exists a homeomorphism $\Psi_0:\pt{G} \to \Sp^2$  such that $\gamma_H$ are (geometrically) round circles in $\Sp^2$. The following theorem (which we prove in the Appendix) is a 
slight extension of the classical result of Whyburn that states that every two Sierpinski curves are homeomorphic.

\begin{theorem}\label{thm-whyburn}  Let $\gamma_k$, $k \in \N$ denote a null sequence of Jordan curves in $\Sp^2$. Then there exists a homeomorphism $F:\Sp^2 \to \Sp^2$ such that  each  curve $F(\gamma_k)$ is a round  circle.
\end{theorem}

Consider the sequence of curves $\gamma_H$, $H \in \mathcal{H}$. Then for each $\epsilon>0$ there are only finitely many curves $\gamma_H$ whose diameter is $>\epsilon$ (see Lemma \ref{lemma-GMRS}). Thus the sequence of curves $\gamma_H$, $H \in \mathcal{H}$, is a null sequence (note that the union of curves $\gamma_H$ is dense in $\pt{G}$).  We apply Theorem \ref{thm-whyburn} and conclude that there exits a homeomorphism $\Psi_0:\pt{G} \to \Sp^2$  such that $\gamma_H$ are (geometrically) round circles in $\Sp^2$. From now on we assume that $\gamma_H$ are round circles.

Let $B_H \subset \overline{\D}^3$ denote the hyperbolic disc (with respect to the hyperbolic metric on $\D^3$) that bounds $\gamma_H$. 
The discs $B_H$ are disjoint for different $H \in \mathcal{H}$ and $\diam(B_H)=\diam(\gamma_H)$ (here $\diam$ refers to the metric inherited from $\R^3$). 

Each connected component $U \in \Ur$ is a convex (with respect to the hyperbolic metric)  subset of $\D^3$ and it follows that $(\overline{U},\pt{U})$ is homeomorphic to $(\overline{\D}^3,\Sp^2)$.  

Let $a,b \in \pt{U}\cap \Sp^2$. Since $U$ is convex (in the hyperbolic metric), the hyperbolic geodesic $\beta$ between $a$ and $b$ is contained in $U$. Suppose that $a$ and $b$ are separated by $\gamma_H$ for some $H \in \Ur_H$. Then $\beta$ intersects the disc $B_{H}$. We first show that $B_H$ is then necessarily contained in $U$. Let $p=\beta \cap B_H$ and denote by $s \ge 0$ the supremum of the set of all non-negative numbers $r$ such that the disc of radius $r$ in (the hyperbolic plane) $B_H$ centered at $p$ is contained in $U$. Since $U$ is open and $p\in U \cap B_H$ we have $s>0$. Suppose that $s<+\infty$. Then the disc of radius $s$ around $p$ contains a point from $\pt{U}$. Since this point does not belong to $\Sp^2$, it has to belong to some disc $B_{H'}$ that is in the boundary of $U$. Thus $B_H \cap B_{H'}\ne \emptyset$ and this implies $B_H=B_{H'}$. But this is a contradiction since $p \in U$ and therefore $p$ does not belong to $B_{H'}\subset \pt{U}$.

Since $B_H \subset U$ it follows that  $B_{H}$ separates $U$ into two connected components and this contradicts the assumption that $U$ is connected. Thus $a$ and $b$ are not separated by any $\gamma_H$.

\end{proof}
It follows that $(\Kr_{\mathcal{H}},\Ur_{\mathcal{H}})$ is a generalized cell decomposition. We now promote it to a free convergence $G$-complex. The following is the main result of this subsection.

\begin{lemma}\label{lemma-imp} There exists a free, convergence $G$-complex $(\mu_{\mathcal{H}},\Kr_{\mathcal{H}},\Ur_{\mathcal{H}})$ with the following property: Let $U \in \Ur$ and (as before) let $\stab_{\mu_{\mathcal{H}}}(U)<G$ denote the stabilizer of $U$ under the action $\mu_{\mathcal{H}}$. Suppose $f \in \stab_{\mu_{\mathcal{H}}}(U)$, $f \ne \id$, and let $f^{+}$ and $f^{-}$ denote the fixed points of $\mu_{\mathcal{H}}(f)f$. 
Then no Jordan curve $\gamma_H$, $H \in \mathcal{H}$,  separates the points $f^{+}$ and $f^{-}$.
\end{lemma}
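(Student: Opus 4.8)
The statement bundles two tasks: first, to upgrade the generalized cell decomposition $(\Kr_{\mathcal{H}},\Ur_{\mathcal{H}})$ of Proposition \ref{prop-framing} to an honest free convergence $G$-complex $(\mu_{\mathcal{H}},\Kr_{\mathcal{H}},\Ur_{\mathcal{H}})$, and second, to establish the separation property for the fixed points $f^+,f^-$. For the construction I would first define the action on the discs: set $\mu_{\mathcal{H}}=\mu_0$ on $\Sp^2$, and for each $H\in\mathcal{H}$ extend the boundary map $\mu_0(g)\colon\gamma_H\to\gamma_{g^{-1}Hg}$ across the disc $B_H$. Since $H_0$ is malnormal quasi-convex it is maximal, so $\stab_G(\gamma_{H_0})=H_0$, and by Theorem \ref{thm-GCJ} the group $\mu_0(H_0)$ acts on $\gamma_{H_0}$ as a Fuchsian group; this boundary action extends canonically to a topological-Fuchsian action on the disc $B_{H_0}$. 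I would fix such an extension for the representative $H_0$ and transport it over the $G$-orbit, exactly as $\xi_{H_0}$ is transported in Proposition \ref{prop-nada}, with malnormality of $H_0$ guaranteeing independence of the choice of $g$ with $g^{-1}H_0g=H$. The equivariant markings $\phi^U\colon(\overline{\D}^3,\Sp^2)\to(\overline{U},\pt U)$ required by Definition \ref{def-rev} exist because each $(\overline U,\pt U)\approx(\overline{\D}^3,\Sp^2)$ by Proposition \ref{prop-framing}(4); I would choose them on $G$-orbit representatives of $\Ur$, arrange equivariance under the (possibly nontrivial) stabilizers $\stab_{\mu_{\mathcal{H}}}(U)$ by interpolating across the disc faces using the homeomorphisms $\xi_H$ of Proposition \ref{prop-nada}, and propagate over each orbit by the equivariance formula. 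Here $\stab_{\mu_{\mathcal{H}}}(B_H)=H$.

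For convergence the essential point is that this is the base case: $(\mu_{\mathcal{H}},\Kr_{\mathcal{H}},\Ur_{\mathcal{H}})$ refines only the trivial complex $(\mu_0,\Sp^2,\{\D^3\})$, whose single chamber has stabilizer all of $G$, so Lemma \ref{lemma-glava} cannot be invoked without circularity and convergence must be checked directly. I would run an argument parallel to the proof of Lemma \ref{lemma-glava}: given distinct $g_n$, pass to a subsequence with $\mu_0(g_n)\to a$ uniformly on compact subsets of $\Sp^2\setminus\{b\}$. Because the discs form a null family (Proposition \ref{prop-framing}(3) with Lemma \ref{lemma-GMRS}) and each closed disc meets $\Sp^2$ exactly along $\gamma_H$, any point of a disc whose closure is small relative to $d(C,b)$ is carried to $a$ together with its target disc. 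Only finitely many discs have diameter bounded below; for these I would use that $\mu_{\mathcal{H}}(\stab_{\mu_{\mathcal{H}}}(B_H))=\mu_{\mathcal{H}}(H)$ is a Fuchsian, hence convergence, group on $\overline{B}_H$, grouping the $g_n$ according to their finitely many possible large target discs and factoring through the disc stabilizers exactly as the sequences $h^i_n$ are used in Lemma \ref{lemma-glava}. Once convergence holds, freeness is automatic since $G$ is torsion-free, by the observation following the definition of a convergence $G$-complex.

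For the separation property I would argue dynamically. Fix $U\in\Ur$ and $f\in\stab_{\mu_{\mathcal{H}}}(U)$, $f\ne\id$; as $G$ is torsion-free and hyperbolic, $\mu_0(f)$ is loxodromic on $\Sp^2$ with north-south dynamics toward distinct fixed points $f^+,f^-$. The set $\pt U\cap\Sp^2=\overline U\cap\Sp^2$ is compact, and since $\mu_{\mathcal{H}}(f)$ preserves both $\pt U$ and $\Sp^2$ it is $\mu_0(f)$-invariant. Every chamber $U$ has at least one disc face $B_H$ (the discs disconnect $\D^3$), and $\gamma_H=\overline{B}_H\cap\Sp^2\subset\overline U\cap\Sp^2$, so $\pt U\cap\Sp^2$ contains an entire circle and in particular at least two points. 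A compact invariant set of a loxodromic element with at least two points must contain both fixed points: taking $x\in\pt U\cap\Sp^2$ with $x\ne f^-$ gives $\mu_0(f)^n(x)\to f^+\in\pt U\cap\Sp^2$, and taking $y\ne f^+$ gives $\mu_0(f)^{-n}(y)\to f^-\in\pt U\cap\Sp^2$. Hence $f^+,f^-\in\pt U\cap\Sp^2$, and Proposition \ref{prop-framing}(5) then shows that no $\gamma_H$ separates $f^+$ and $f^-$.

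The main obstacle is the convergence verification: the definition of the action and markings is essentially equivariant bookkeeping, and the separation property is a short dynamical payoff once convergence (hence the $G$-complex) is in place. The delicate case is the finitely many discs of large diameter, and in particular the unique disc whose bounding circle is anchored at the repelling point $b$; controlling its image forces one to replace the sphere dynamics by the Fuchsian dynamics of that disc's stabilizer $H$ on $\overline{B}_H$, in the same way that the proof of Lemma \ref{lemma-glava} reduces to the convergence action of $\stab_{\mu'}(W_i)$.
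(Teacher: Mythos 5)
Your proposal is correct and follows the paper's overall strategy: build the complex on the decomposition of Proposition \ref{prop-framing}, use Proposition \ref{prop-nada} for the equivariant markings, verify convergence directly (agreeing that Lemma \ref{lemma-glava} cannot be invoked without circularity), and derive the separation property from Proposition \ref{prop-framing}(5). The one genuinely different mechanism is your definition of the action on the discs: you extend the topologically Fuchsian boundary action of $H_0$ over $B_{H_0}$ and transport it along the $G$-orbit, which requires choosing reference homeomorphisms between the discs and a well-definedness check via malnormality. The paper avoids this entirely: it fixes arbitrary homeomorphisms $\psi_H\colon\overline{L}_H\to\overline{B}_H$ that are the identity on $\gamma_H$ and sets $\mu(g)=\psi_{g^{-1}Hg}\circ\mu_0(g)\circ\psi_H^{-1}$ on $\overline{B}_H$, i.e., each disc $B_H$ is modeled on the spherical Jordan domain $L_H$ where the action already exists, so the homomorphism property and boundary compatibility are automatic and no Fuchsian extension or orbit bookkeeping is needed at this step (Fuchsian theory enters only through $\xi_H$ in Proposition \ref{prop-nada}, precisely to handle the $D_H$-side of the markings, as you anticipate). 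In the other direction, your write-up supplies details the paper compresses: the paper disposes of convergence in one sentence by citing Proposition \ref{prop-framing}(3), whereas your reduction of the finitely many large discs to the convergence action of the disc stabilizers, parallel to the $h^{i}_{n}$ argument of Lemma \ref{lemma-glava}, is the argument that is actually needed (note only that under the paper's definition the stabilizer action on $\overline{B}_H$ is convergence because it is $\psi_H$-conjugate to the action of $\mu_0(H)$ on $\overline{L}_H$, so Fuchsianness is not required there either); likewise your north--south dynamics argument showing $f^{+},f^{-}\in\pt{U}\cap\Sp^2$ is the step needed before Proposition \ref{prop-framing}(5) applies, which the paper asserts without proof. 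Your freeness shortcut (convergence plus torsion-freeness, via the paper's own earlier observation) is legitimate; the paper instead reproves freeness directly by counting fixed points of $\mu_0(g)$ on $\gamma_H\cup L_H$.
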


\begin{proof} We define the $G$-action $\mu_{\mathcal{H}}=\mu:G \to \Homeo(\Kr)$ as follows (where $\Kr=\Kr_{\mathcal{H}}$ was defined in the previous proposition). We let $\mu=\mu_0$ on $\Sp^2$. 

For each $H \in \mathcal{H}$ we fix a homeomorphism $\psi_H: \overline{L}_H \to \overline{B}_H$, such that $\psi_H=\id$ on $\gamma_H$. We let
\begin{equation}\label{eq-def}
\mu(g)(x)=\big( \psi_{g^{-1}Hg} \circ \mu_0(g) \circ \psi^{-1}_H \big)(x), \,\, \text{for}\,\, x \in \overline{B}_H.
\end{equation}

It is straightforward to verify that $\mu(g) \in \Homeo(\Kr)$ and that $\mu:G \to \Homeo(\Kr)$ is a $G$-action.  
To show that $(\mu,\Kr,\Ur)$ is a $G$-complex it remains to construct homeomorphisms $\phi^U:(\overline{\D}^3,\Sp^2) \to (\overline{U},\pt{U})$, for $U \in \Ur$, such that $\phi^U$ conjugates the action of $\mu(G)$ 
(meaning that the condition $(1)$ from Definition \ref{def-rev} holds).

Fix $U \in \Ur$. We let $\phi^U=\id$ on $\pt{U} \cap \Sp^2$.  Observe that the set $\pt{U} \setminus \Sp^2$ is a disjoint union of discs $B_H$, where $H$ belongs to a sub-collection $\mathcal{H}_U \subset \mathcal{H}$. 
Moreover, every connected component of the set $\Sp^2 \setminus \pt{U}$ is either $D_H$ or $L_H$ for some $H \in \mathcal{H}_U$.  Thus, we can write the set 
$\Sp^2 \setminus \pt{U}$ as a disjoint union of the corresponding sets $D_H$ or $L_H$.

If $x \in L_H$, for some $H \in \mathcal{H}_U$, we let $\phi^U=\psi_H$. If  $x \in D_H$, for some $H \in \mathcal{H}_U$, we let $\phi^U=\psi_H \circ \xi_H$ (see the definition $\xi_H$ from Proposition \ref{prop-nada}).
Since  $\xi_H$ is the identity on $\gamma_H$ it follows that $\phi^U$ is the identity on $\pt{U}\cap \Sp^2$. 

Next, we verify that $\phi^U$ satisfies the condition $(1)$ from Definition \ref{def-rev}, that is we check the equality 
\begin{equation}\label{eq-rev-b}
\mu_0(g)\circ \phi^U=\phi^{\mu_{0}(g)(U)} \circ \mu_0(g),\, \text{on}\,\, \Sp^2, \,\, \text{for every}\,\,  g \in G.
\end{equation}
We first consider (\ref{eq-rev-b}) for the restriction of $\phi^{U}$ to some $L_H$. In this case $\phi^U=\psi_H$ and (\ref{eq-rev-b})  follows from (\ref{eq-def}). Consider the restriction of $\phi^{U}$ to some $D_H$. Then (\ref{eq-rev-b}) follows from 
(\ref{eq-def})  and from (\ref{eq-0-l}) in  Proposition \ref{prop-nada}. Hence,  we have proved that $\phi^U$ conjugates the action of $\mu(G)$ from $\Sp^2$ to $\pt{U}$.

So far we have defined the map $\phi^U$ to be a a homeomorphism from $\Sp^2$ to $\pt{U}$. We define $\phi^U:\overline{\D}^3 \to \overline{U}$ to be any homeomorphism that that agrees with the map  $\phi^U$ on $\Sp^2$.

We check that $\mu$ is a free $G$-action as follows. Suppose $\mu(g)(x)=x$. Then $x$ belongs to some $B_H$ and $g \in \stab_G(H)$. Since $H$ is a malnormal (and thus maximal) subgroup it follows $g \in H$. Thus, $\mu_0(g)$ fixes the point $\phi_H(x) \in L_H$. We know that $\mu_0(g)$ has two fixed points on $\gamma_H$ and it follows that $\mu_0(g)$ has at least 3 fixed points which implies that $g=\id$ (recall that we assume that $G$ is torsion free and $\mu_0(g)$ are orientation preserving). 

We already know that $\mu(G)$ is a convergence group on $\Sp^2$. It then follows from the property (3) in Proposition \ref{prop-framing} above that $\mu$ is a convergence $G$-complex. Also, let $f \in \stab_{\mu_{\mathcal{H}}}(U)<G$, $f \ne \id$. It follows from  (5) in Proposition \ref{prop-framing} that no Jordan curve $\gamma_H$, $H \in \mathcal{H}$, separates the points $f^{+}$ and $f^{-}$.

\end{proof}

\subsection{The proof of Theorem \ref{thm-1-new}} Let $G$ be a torsion-free hyperbolic group with $\pt{G}\approx \Sp^2$, which acts by orientation preserving homeomorphisms on $\pt{G}$. Suppose that $H_i<G$, $1\le i\le k$, are quasi-convex, malnormal surface subgroups, such that every two points of $\pt{G}$ are separated by the limit set of some $G$-conjugate of some $H_i$. Denote by $\mathcal{H}_{i}$ the collection of $G$-conjugates of $H_i$. Let $(\mu_{\mathcal{H}_{i}},\Kr_{\mathcal{H}_{i}},\Ur_{\mathcal{H}_{i}})$ be the free, convergence $G$-complex from Lemma \ref{lemma-imp}. 

Let $(\mu_{j+1},\Kr_{j+1},\Ur_{j+1})$ be the $G$-complex that is defined inductively as the refinement of the $G$-complex $(\mu_j,\Kr_j,\Ur_j)$ by the $G$-complex  \newline
$(\mu_{\mathcal{H}_{j+1}},\Kr_{\mathcal{H}_{j+1}},\Ur_{\mathcal{H}_{j+1}})$, and $(\mu_1,\Kr_1,\Ur_1)=(\mu_{\mathcal{H}_{1}},\Kr_{\mathcal{H}_{1}},\Ur_{\mathcal{H}_{1}})$. 
Set  $(\mu,\Kr,\Ur)=(\mu_k,\Kr_k,\Ur_k)$. 

Since each  $(\mu_j,\Kr_j,\Ur_j)$ is a free, convergence $G$-complex, it follows from Proposition \ref{prop-triv-0} and Proposition \ref{prop-triv} that  $(\mu,\Kr,\Ur)$ is a free, convergence $G$-complex. Let $U \in \Ur$. Then it follows from  Proposition \ref{prop-inter} that there are sets $U_i \in \Ur_{\mathcal{H}_{i}}$ such that 
$$
\stab_{\mu}(U)=\bigcap_{i=1}^{i=k} \stab_{\mu_{\mathcal{H}_{i}}}(U_i).
$$

Suppose that $g \in \stab_{\mu}(U)$. Then $g \in \stab_{\mu_{\mathcal{H}_{i}}}(U_i)$, for each $i$. If $g \ne \id$,  we see from Lemma \ref{lemma-imp} that the fixed points $g^{+}$ and $g^{-}$ are not separated by 
any $G$-conjugate of $H_i$, $1\le i \le k$. But this contradicts the assumption that every two points in $\pt{G}$ are separated by such a conjugate. Thus each $\stab_{\mu}(U)$ is trivial.

Let $\Kr_0=\D^3$ and $\Ur_0$ an empty collection. From Proposition \ref{prop-vazno} we obtain a free and convergence $G$-complex $(\mu_0,\Kr_0,\Ur_0)$ that is a refinement of the $G$-complex  $(\mu,\Kr,\Ur)$. 
Then, from Proposition \ref{prop-lako} we conclude that $G$ is isomorphic to the fundamental group of a 3-manifold, and we are finished.

\section{Appendix} An $S$-curve (or a Sierpinski curve)  is a subset of the 2-sphere $\Sp^2$ that remains after removing from $\Sp^2$ the interiors of a null sequence of mutually disjoint closed Jordan  disks whose
union is dense in $\Sp^2$. The well known theorem of Whyburn (Theorem 3 in \cite{whyburn}) states that every two $S$-curves are homeomorphic (Cannon \cite{cannon} has extended this result to $(n-1)$-dimensional $S$-curves for $n \ne 4$).

In fact, Whyburn  proves the following (see Theorem 3 and its proof in \cite{whyburn} or see \cite{cannon}). 
We say that a Jordan curve $\gamma$ is an outer boundary curve of an $S$-curve  if $\gamma$ bounds one of the Jordan discs in its complement (if we talk about $\gamma$ as an oriented curve we mean the orientation that $\gamma$ inherits as the boundary of the corresponding Jordan disc).

\begin{theorem}\label{thm-whyburn-1} Let $S_1$ and $S_2$ denote two $S$-curves  and let $\gamma_1$ and $\gamma_2$ denote outer boundary curves of $S_1$ and $S_2$ respectively. 
Let $h_0:\gamma_1 \to \gamma_2$ denote an orientation preserving homeomorphism. Then there exists a homeomorphism  $h:S_1 \to S_2$ that extends $h_0$.
\end{theorem}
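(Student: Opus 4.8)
The plan is to promote $h_0$ to a global homeomorphism $H:\Sp^2\to\Sp^2$ that is orientation preserving, carries the complementary disk of $\gamma_1$ onto that of $\gamma_2$, and carries the entire system of complementary disks of $S_1$ bijectively onto the system of complementary disks of $S_2$; its restriction to $S_1$ will then be the desired $h$. Write $S_j=\Sp^2\setminus\bigcup_{i\ge 0}\mathring{A}^{j}_i$, where the closed peripheral disks $A^{j}_i$ are disjoint, form a null sequence, have dense union, and where $A^{1}_0,A^{2}_0$ are the disks bounded by $\gamma_1,\gamma_2$. Since $\bigcup_i A^{j}_i$ is dense, the union of peripheral circles $\bigcup_i\pt A^{j}_i$ is dense in $S_j$, so once $H$ is shown to permute the two disk systems compatibly with $h_0$ one gets $H(S_1)=S_2$ automatically from
$$
H(S_1)=H\Big(\Sp^2\setminus\bigcup_i\mathring{A}^{1}_i\Big)=\Sp^2\setminus\bigcup_i\mathring{A}^{2}_i=S_2 .
$$

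The construction is a back-and-forth matching realized by a sequence of homeomorphisms $H_n:\Sp^2\to\Sp^2$. I would order the peripheral disks on each side by \emph{decreasing diameter} (legitimate since the families are null, so only finitely many exceed any threshold). Start with $H_0$ extending $h_0$ across $A^{1}_0\to A^{2}_0$ and across the complementary closed disks by the Schoenflies theorem, using that $h_0$ is orientation preserving. At stage $n$, finitely many peripheral disks of $S_1$ are matched set-theoretically with peripheral disks of $S_2$, and $H_n$ realizes the matching; the matched circles cut $\Sp^2$ into finitely many Jordan regions, inside each of which the residual configuration is again a dense null family of disjoint disks. The engine of the induction is the annulus extension fact: if a Jordan region $R$ contains a single distinguished interior peripheral disk $A$, then $R\setminus\mathring{A}$ is a closed annulus, and any orientation-compatible homeomorphism prescribed on its two boundary circles extends to the annulus. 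Alternating a \emph{forth} step (match the least-indexed still-unmatched disk of $S_1$ by modifying $H_n$ only inside the gap region containing it, sending it onto a freely chosen unmatched disk of $S_2$ inside the image region, with an orientation-preserving boundary identification) and the symmetric \emph{back} step guarantees that in the limit every peripheral disk on both sides is matched.

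The main obstacle is convergence to a \emph{homeomorphism}: each modification is supported in a gap region that is a priori not small, so uniform convergence of $H_n$ is not automatic, and one must control $H_n^{-1}$ as well to avoid a mere continuous surjection. This is exactly where the null-sequence and density hypotheses on \emph{both} disk systems enter. The key point is that the mesh of the decomposition of $\Sp^2$ by the first $n$ matched disks tends to $0$: a gap region that is not small would, by density of the disk union, contain a large disk, which the decreasing-diameter ordering forces to be matched at an early stage. Because the disks are ordered by decreasing diameter, the forced ``least index'' choices handle the large disks first on both sides, so for large $n$ both the region $R$ supporting the $n$-th modification and its counterpart have diameter below any prescribed bound; hence $d(H_{n+1},H_n)$ and $d(H^{-1}_{n+1},H^{-1}_n)$ are small, and both $H_n$ and $H_n^{-1}$ are uniformly Cauchy. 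Their limits are mutually inverse continuous maps, so $H=\lim H_n$ is a homeomorphism of $\Sp^2$ with $H|_{\gamma_1}=h_0$ permuting the two peripheral systems; then $h=H|_{S_1}$ completes the proof. The delicate point I expect to require the most care is verifying the mesh-to-zero claim simultaneously on both sides while the back-and-forth still exhausts every disk, i.e. reconciling the surjectivity bookkeeping (least unmatched index) with the diameter control (largest disks first); choosing the decreasing-diameter enumeration is precisely what makes these two requirements coincide, and the residual freedom in the target choices absorbs any remaining slack.
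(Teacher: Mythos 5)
Your general strategy --- a back-and-forth matching of the two peripheral disk systems, realized by a sequence of sphere homeomorphisms built from Schoenflies and annulus extensions --- is the right family of ideas; note, though, that the paper itself does not prove this statement at all, but quotes it as Theorem 3 of \cite{whyburn} (see also \cite{cannon}), so the relevant comparison is with Whyburn's actual argument. Measured against that, your proposal has one genuine gap, and it sits exactly at the point you yourself flag as delicate: convergence.

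The gap is that your mesh claim is false. Finitely many pairwise disjoint closed Jordan disks in $\Sp^2$ always have \emph{connected} complement (a sphere with finitely many holes), so ``the matched circles cut $\Sp^2$ into finitely many Jordan regions'' is incorrect: at every stage there is exactly one gap region, it is not a Jordan region, and its closure contains all of $S_1$ (no point of the carpet has a neighborhood covered by finitely many of the closed peripheral disks, so every carpet point is a limit of unmatched disks, which all lie in the gap). Consequently the gap region has diameter at least $\diam(S_1)$ \emph{forever}, no matter how many disks have been matched; ordering by decreasing diameter does nothing to shrink it. Since each modification $H_{n+1}$ of $H_n$ is supported precisely in this gap region (and its image), you obtain no bound on $d(H_{n+1},H_n)$ or on $d(H_{n+1}^{-1},H_n^{-1})$, and the uniform Cauchy argument collapses; what remains is only a bijective matching of disk systems, not a homeomorphism. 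The missing ingredient --- which is the real content of Whyburn's proof --- is a system of auxiliary \emph{cross-cuts}: Jordan curves that genuinely cross the carpet (they cannot avoid the peripheral disks, whose union is dense, so they must traverse some of them), chosen so as to cut $\Sp^2$ into finitely many closed Jordan regions of small diameter, each meeting the carpet in a sub-$S$-curve. The back-and-forth must be run on such finite decompositions on \emph{both} sides, matching regions to regions and only then disks to disks inside corresponding regions; mesh tending to zero is then imposed by the choice of cross-cuts rather than hoped for, and that is what makes $H_n$ and $H_n^{-1}$ uniformly Cauchy. Constructing these cross-cuts and verifying that the resulting pieces are again $S$-curves compatible with the matching is substantial work and is absent from your proposal; without it (or some alternative convergence mechanism), the proof does not close.
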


Our Theorem \ref{thm-whyburn} is essentially a corollary of the previous theorem of Whyburn.

\begin{theorem}\label{thm-whyburn} Let $\gamma_k$, $k \in \N$, denote a null sequence of Jordan curves in $\Sp^2$. Then there exists a homeomorphism $F:\Sp^2 \to \Sp^2$ such that each curve $F(\gamma_k)$ is a round  circle.
\end{theorem}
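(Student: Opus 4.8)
The plan is to deduce Theorem \ref{thm-whyburn} from Whyburn's extension result, Theorem \ref{thm-whyburn-1}, by exhibiting the curves $\gamma_k$ as boundary curves of a Sierpinski ($S$-) curve and then transporting that $S$-curve to a round one. First I would choose, for each $k$, the ``inner'' complementary Jordan domain $D_k$ of $\gamma_k$, namely the one of smaller diameter; since $\{\gamma_k\}$ is a null sequence one has $\diam(D_k)\to 0$. Call $\gamma_k$ \emph{outermost} if $D_k$ is not contained in any $D_j$ with $j\neq k$. The key first observation is that the outermost disks are pairwise disjoint: two of them have disjoint boundary curves, so if they met, one would contain the other, contradicting outermost-ness. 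I would then enlarge the collection of outermost disks by a null sequence of pairwise disjoint round disks placed in the complement of $\overline{\bigcup_k \gamma_k}$, chosen so that the union of all removed disks is dense; this turns the complement $S_1$ of the union of their interiors into a genuine $S$-curve whose boundary curves include every outermost $\gamma_k$.

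Next I would fix any round $S$-curve $S_2$, select an outer boundary curve of each of $S_1,S_2$, and pick an orientation-preserving homeomorphism between these two boundary curves. Theorem \ref{thm-whyburn-1} then extends it to a homeomorphism $h:S_1\to S_2$. Since a homeomorphism of $S$-curves carries complementary-domain boundary curves to complementary-domain boundary curves, $h$ sends each outermost $\gamma_k$ to a round circle. Extending $h$ across each removed disk $D_k$ by an arbitrary homeomorphism onto the corresponding round disk $C_k$ of $S_2$ produces a homeomorphism $F_0:\Sp^2\to\Sp^2$ that rounds every outermost curve.

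The remaining curves are nested inside the disks $D_k$, and after applying $F_0$ they lie inside the round disks $C_k$, where they again form a null sequence of disjoint Jordan curves. I would straighten them by recursion: inside each $C_k$, applying Theorem \ref{thm-whyburn-1} with $\partial C_k$ taken as the common outer boundary curve and the identity imposed on it yields a homeomorphism of $C_k$, fixing $\partial C_k$ pointwise, that rounds the next layer of curves. Carrying this out simultaneously in all disks of a given layer defines $F_1$, supported in the layer-$1$ disks and fixing their boundaries; iterating gives homeomorphisms $F_0,F_1,F_2,\dots$, where $F_n$ is supported in the round disks created at layer $n$. The desired map is the infinite composition $F=\dots\circ F_1\circ F_0$.

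The step I expect to be the main obstacle is proving that this infinite composition converges to a homeomorphism of $\Sp^2$, and this is exactly where the null-sequence hypothesis is essential. Because $\diam(\gamma_k)\to 0$, the curves appearing at deep nesting layers have small diameter, so the disks supporting $F_n$ have diameters tending to $0$ uniformly as $n\to\infty$; hence the partial compositions are uniformly Cauchy, and the same estimate applied to the inverse compositions shows the limit $F$ is a homeomorphism. By construction $F(\gamma_k)$ is a round circle for every $k$. Thus the only genuinely delicate points are the bookkeeping that organizes the curves into nesting layers and the verification that the layerwise supports shrink; the rounding performed at each layer is a direct invocation of Whyburn's Theorem \ref{thm-whyburn-1}, which is why the result is ``essentially a corollary'' of it.
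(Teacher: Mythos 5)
Your layered-rounding scheme is a genuinely different route from the paper's (which takes the closures of the connected components of $\Sp^2\setminus\bigcup_k\gamma_k$, shows via Moore's theorem on triods that they form a countable family of $S$-curves tiling $\Sp^2$ and meeting only along common boundary curves, and pastes together tile-by-tile maps onto a round tiling, with no nesting hierarchy and no infinite composition), but as written it has two genuine gaps. The more serious one is the convergence step, which you correctly identify as the crux but then justify incorrectly. The supports of $F_{n+1}$ are round disks in the \emph{partially straightened} sphere, i.e.\ images under $G_n=F_n\circ\dots\circ F_0$ of source disks, and nothing about $\diam(\gamma_k)\to 0$ controls their size: a homeomorphism can send a source disk of diameter $10^{-6}$ onto a round disk of diameter $1/2$, so smallness of the deep-layer curves is simply not inherited by the disks supporting $F_n$. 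What saves the argument is a freedom you never invoke: Theorem \ref{thm-whyburn-1} places no restriction on the target $S$-curve, so at each layer you may \emph{choose} the round packing inside each disk $C$ to consist of disks of diameter at most $\tfrac12\diam(C)$, forcing the layer-$n$ supports to have diameter at most $2^{1-n}$. Even then, the parenthetical "the same estimate applied to the inverse compositions" does not work: $G_{n+1}^{-1}=G_n^{-1}\circ F_{n+1}^{-1}$, so estimating $G_{n+1}^{-1}-G_n^{-1}$ requires pushing the small displacement of $F_{n+1}^{-1}$ through the modulus of continuity of $G_n^{-1}$, which degenerates as $n$ grows; one must either choose the layer-$(n+1)$ disk sizes small depending on $G_n^{-1}$, or prove bijectivity of the limit directly from the nested-disk structure and invoke compactness of $\Sp^2$. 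The real role of the null hypothesis is elsewhere: it makes the hierarchy well-founded (each curve has finite depth, since only finitely many chosen disks can contain a given one) and makes each layerwise pasted map continuous.

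The second gap is your "key first observation." For disjoint Jordan curves with chosen sides $D_i,D_j$ there is a fourth configuration besides disjointness and nesting, namely $D_i\cup D_j=\Sp^2$, equivalently $\overline{D_i'}\subset D_j$ and $\overline{D_j'}\subset D_i$ (primes denoting the opposite sides), so "if they met, one would contain the other" is false. With the smaller-diameter rule this configuration forces all four sides to have diameter $2$, i.e.\ a tie, and ties do occur: take two disjoint thin closed "bananas," one a neighborhood of a meridian arc through both poles, the other of a half-equator joining $(0,1,0)$ to $(0,-1,0)$; each contains an antipodal pair, so for each boundary curve both complementary domains have diameter exactly $2$, and an unlucky tie-break selects the two banana \emph{complements}, whose union is $\Sp^2$. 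Then your layer-$0$ disks overlap and $S_1$ is not an $S$-curve. The fix is cheap: choose $D_k$ to be the side of smaller \emph{area}. In the covering configuration one would get $\operatorname{area}(D_i)\le\operatorname{area}(D_i')<\operatorname{area}(D_j)\le\operatorname{area}(D_j')<\operatorname{area}(D_i)$, a contradiction (the strict inequalities hold because a compact set strictly inside an open set leaves a nonempty open complement); and since a curve of small diameter has one side contained in a small ball, the smaller-area side still has small diameter, so outermost curves still exist and every curve still has finite depth. With these two repairs — the area rule, and adaptively small round packings with control of the inverses — your argument goes through.
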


\begin{proof} We may assume that the union of Jordan curves $\gamma_k$ is dense in $\Sp^2$ (we can always replace the sequence $\gamma_k$ with a new null sequence of disjoint Jordan curves $\gamma'_m$ such that the sequence $\gamma'_m$ contains $\gamma_k$ as a subsequence and the union of curves $\gamma'_m$ is dense in $\Sp^2$).

Let $S$ be a connected component of 
$$
\Sp^2 \setminus \bigcup_{k}\gamma_k.
$$
As usual, by $\overline{S}$ we denote the closure of $S$. Then the complement of $\overline{S}$ is a union of open Jordan discs (with mutually disjoint closures) that are bounded by some subset of the curves from the sequence $\gamma_k$. Moreover, the union of these Jordan discs is dense in $\Sp^2$ (this is because the union of the corresponding  closed discs contains all the curves $\gamma_k$ and their union is dense in $\Sp^2$). 
By definition, the set $\overline{S}$ is an $S$-curve.

By a triod we mean a plane continuum $K$ which contains a sub-continuum $K_1$ such that $K \setminus K_1$ has at least three components. The letter $Y$ (by this we mean a topological space homeomorphic to the letter $Y$) is an example of a triod.
Recall that the Moore's Triode Theorem \cite{moore} says that one can not pack more than countably many disjoint copies triods in $\Sp^2$,

Each  connected component $S$ contains a copy of the letter $Y$. It is easily seen that the ``interior" of  the standard Sierpinski curve curve contains a copy of the letter $Y$ (by the ``interior" of an $S$ curve we mean the points in the curve that do not lie on an outer boundary circle). Since $\overline{S}$ is homeomorphic to the standard $S$-curve we have that the similar conclusion holds for $S$. 
It follows from the   Moore's Triode Theorem there are at most countably many connected components of $\Sp^2 \setminus (\cup_{k}\gamma_k)$.  On the other hand, we can not cover $\Sp^2$ by finitely many $S$-curves and thus there are infinitely many connected components of  $\Sp^2 \setminus (\cup_{k}\gamma_k)$. We enumerate them as $S_n$, $n \in \N$. We rename the corresponding $S$-curves as $D_n=\overline{S}_n$.

The sequence $D_n$ has the following properties:

\begin{enumerate}
\item The relation 
$$
\bigcup_{n \in \N} D_n=\Sp^2,
$$
holds.
\item If $D_i \cap D_j \ne \empty$ and $i \ne j$,  then $D_i \cap D_j=\gamma_l$, for some $l \in \N$, and $\gamma_l$ is an outer boundary curve for both $D_i$ and $D_j$. Moreover, $D_i$ and $D_j$ lie on the opposite sides of $\gamma_l$.
\end{enumerate}
The above properties (1) and (2) follow from the following observations.
By definition we have the equality
$$
\bigcup_{n \in n} S_n \cup \bigcup_{k \in \N} \gamma_k=\Sp^2.
$$
Moreover, given any $\gamma_k$ there are exactly two $S$-curves $D_i$ and $D_j$ that contain $\gamma_k$ as an outer boundary curve (and they lie on the opposite sides of $\gamma_k$).
Thus
$$
\bigcup_{n \in n} D_n=\bigcup_{n \in n} S_n \cup \bigcup_{k \in \N} \gamma_k=\Sp^2,
$$
and this proves (1). On the hand, if $\gamma$ is an outer boundary curve of some $D_n$ then $\gamma$ is a curve from the sequence $\gamma_k$. This confirms the above property (2).

\vskip .3cm
Next, by $\delta_n$ we denote a sequence of disjoint round circles in $\Sp^2$ whose union is dense in $\Sp^2$. By $E_m$ we denote the sequence of $S$-curves that are closures of the connected components of  $\Sp^2 \setminus (\cup_{k}\delta_k)$.
We construct the map $F$ as follows. 

Let $n_1=k_1=1$. By Theorem \ref{thm-whyburn-1} there exists a homeomorphism $F_1:D_{n_{1}} \to E_{k_{1}}$. Choose an $S$-curve from the sequence $D_n$ that shares a common outer boundary curve $\gamma$ with $D_{n_{1}}$. We relabel this new $S$-curve 
as $D_{n_{2}}$. The corresponding $S$-curve from the sequence $E_n$ is denoted by $E_{k_{2}}$. Then $E_{k_{2}}$ and $E_{k_{1}}$ share the outer boundary curve $F_1(\gamma)$). Let  $F_2:D_{n_{2}} \to E_{k_{2}}$ be a homeomorphism that agrees with $F_1$ 
on $\gamma$ (such $F_2$ exists by Theorem \ref{thm-whyburn-1}). 

We repeat this procedure and construct the sequence of homeomorphisms  $F_j:D_{n_{j}} \to E_{k_{j}}$ such that any two $F_j$ and $F_i$ agree on $D_{n_{j}} \cap D_{n_{i}}$.  Pasting $F_j$'s together gives the required homeomorphism  $F:\Sp^2 \to \Sp^2$.

\end{proof}

\end{document}